\numberwithin{equation}{section}
\newlength{\parindentsave}\setlength{\parindentsave}{\parindent}
\providecommand{\R}{\mathbb{R}}
\providecommand{\A}{\mathcal{A}_L}
\providecommand{\E}{R}
\providecommand{\I}{\frac{\pi \mathbb{Z}}{L}} % index set
\providecommand{\Ip}{\frac{\pi \mathbb{N}}{L}} % index set  
\providecommand{\ve}{\varphi_\epsilon}
\providecommand{\vt}{\Upsilon}
\renewcommand{\S}{\mathcal{S}_L}
\providecommand{\ud}{\, \mathrm{d}}
\providecommand{\dxy}{\ud x \ud y}
\providecommand{\dx}{\ud x}
\providecommand{\dy}{\ud y}
\providecommand{\Oomega}{\Omega'}
\newcommand{\supp}{\operatorname{supp}} 
\providecommand{\lint}{\fint_{-L}^L}
\newtheorem{theorem}{Theorem}
\newtheorem{lemma}{Lemma}[section]
\newtheorem*{lemma*}{Lemma}
\newtheorem{remark}[lemma]{Remark}
\newtheorem{proposition}[lemma]{Proposition}
\newtheorem{cor}[lemma]{Corollary}
\newtheorem*{def*}{Definition}
\title{Transition between planar and wrinkled regions in uniaxially stretched thin elastic film}
\author{Peter Bella\footnote{Max Planck Institute for Mathematics in the Sciences, Leipzig (Germany), email: bella@mis.mpg.de}}
\begin{document}
\allowdisplaybreaks % allows to break equations

\maketitle

\begin{abstract}
We study the transition from flat to wrinkled region in uniaxially stretched thin elastic film. We set up a model variational problem, and study energy of its ground state. Using known scaling bounds for the minimal energy, the minimal energy can be written as a minimum of the underlying (convex) relaxed problem plus a term, which grows linearly in the thickness of the film. We show that in the limit of vanishing thickness the prefactor in the scaling law for the original problem can be obtained by minimization of simpler {\it scalar} constrained variational problems. %In order to do that, we study regularity of minimizers of those scalar variational problems.
\end{abstract}

% \tableofcontents

\section{Introduction}\label{intro}

In the last few years understanding the behavior of deformed thin elastic sheets has attracted a lot of attention both in the mathematics and physics communities (see, e.g., \cite{bib-surfacewrinkling} and references therein). Both physical and mathematical approach have a common starting point -- they consider a minimization of a suitable elastic energy, which in a simplified setting consists of a non-convex ``membrane energy'' plus a higher-order singular perturbation representing the ``bending energy.'' Since the energy is non-convex there can be many local minimizers or even non-minimizing stationary points, %(some of them with possibly high energy), 
and so we should not use just the local optimality conditions (Euler-Lagrange equations). %Moreover, the associated Euler-Lagrange equations are usually a non-linear system of fourth-order partial differential equations, which is another reason why there is a little hope to explicitly obtain the ground state. 
One possible approach to the problem is to guess specific form of a minimizer, and use this {\it ansatz} to find a deformation with minimal energy in this restricted (usually much smaller) class of deformations -- this approach is often favored by physicists (see, e.g., \cite{bib-bennynew,bib-benny1}). %This way we can often explicitly compute a minimizer, but at the cost of restricting minimization to a specific class of deformations. %On the other hand, since the energy could have many local minima and the associated Euler-Lagrange equations is a non-linear fourth-order system of PDEs, there is little hope to explicitly compute the ground state of the energy. 

Different approach to the minimization of non-convex variational problem is to focus on (i) the minimum value of the energy, and (ii) the properties of low-energy states. This idea is often used to study energies which depend on some small or large physical parameter (or possibly several parameters), with the minimum of the energy estimated in terms of this parameter (often as a power-law). Such an approach was first introduced by Kohn and M\"uller in \cite{bib-bob+muller1+,bib-bob+muller1} to study a model problem for the fine scale structure of twinning near an austenite--twinned-martensite interface, where they identified a scaling law for the energy minimum in terms of material parameter. Since then this approach was successfully applied to many variational problems arising in material science \cite{choksi2001scaling,choksi2008ground,choksi1999domain}.
%minimization problems. % (to name a few, let us mention \cite{). 
%\vskip.5cm

The present paper is motivated by the study of deformations of an annular elastic thin film (see~\cite{bib-bellakohn2,bib-bennynew,bib-benny1,bib-geminard}). Dead loads applied both on the inside and outside boundary cause the film to wrinkle in some region. Indeed, if the loads inside are large enough compared to the loads on the outer boundary, the deformation in the radial direction forces the concentric circles of the material near the inner boundary to decrease their length by more than is required by the Poisson ratio of the material. Therefore, the membrane needs to waste this excess in the circumference
either by compression or by buckling out of plane, contributing to the energy with
some amount that depends on $h$, the thickness of the film. In~\cite{bib-bellakohn2}, Kohn and the author identify the optimal scaling law for the energy. More precisely, it is shown that the minimum of the average energy per unit thickness satisfies $\mathcal{E}_0 + C_0 h \le \min E_h \le \mathcal{E}_0 + C_1 h$, where $\mathcal{E}_0$ is the energy required to stretch the film radially and the next term (linear in $h$) is the correction due to wrinkling. In contrast, the construction used in~\cite{bib-benny1} requires energy at least  $\mathcal{E}_0 + C h \log h$, which is suboptimal for small enough $h$.  

As mentioned above the sheet prefers to wrinkle near the inner boundary because in there  some arclength in the circumferential direction should be wasted. In fact, there are two regions -- the inner region where the arclength should be wasted and the outer region where the sheet is stretched biaxially. These two regions are basically separated by a circle (``free boundary''), and the amount of arclength to be wasted in the inner region growths linearly in the distance from this circle. The ansatz used in~\cite{bib-benny1} assumes that the number of wrinkles is constant, which together with the linear growth of the wasted arclength causes additional logarithm in the scaling law, whereas the construction used in \cite{bib-bellakohn2} increases the number of wrinkles near the transition from wrinkled to planar region.\footnote{As a matter of fact, if the amount of arclength would grow slower (e.g. quadratically in the distance), both constructions would achieve the same scaling law, and there would be no 
reason to vary the number of wrinkles.} 
Though the cascade of wrinkles used in~\cite{bib-bellakohn2} achieves the optimal scaling for the energy, it is not clear whether such a construction is necessary. 

To understand better what happens in the transition region we set up a toy problem, which should be simple enough to allow an analysis beyond the scaling law, but which should still possess all the important features of the original problem. We consider an elastic thin film which is stretched in one direction, and in the other direction it wants to be stretched in half of the domain and needs to waste some arclength in the rest of the domain. To correctly model the setting, we assume that the amount by which the sheet is stretched (or the amount of arclength it needs to waste) is a linear function of the distance to the transition region. 

If $\min E_h$ denotes the minimum of the elastic energy for the sheet of thickness $h$, using the same approach as in~\cite{bib-bellakohn2} one can show that $\frac{1}{h} ( \min E_h -\mathcal{E}_0)$ is bounded from above and away from $0$ independently of the thickness. In the present paper we go a step further -- we show that the limit $\lim_{h \to 0} \frac{1}{h} ( \min E_h -\mathcal{E}_0)$ exists and that its value can be characterized by minimization of simpler (scalar) constrained variational problems. In other words, we prove that minimal values of the rescaled energies $(E_h - \mathcal{E}_0)/h$ have the same limit as minimal values of simpler scalar constrained functionals. It is then natural to ask whether a similar statement holds for minimizers, i.e., whether (possibly properly rescaled) minimizers of $E_h$ converge (in some sense) to minimizers of scalar constrained functionals. We would like to address this question (using methods of Gamma-convergence) in a future work. 
%~\cite{bib-bellaotto-annulus}. 

% \vskip.5cm

To put our work in its proper context, let us mention related (both mathematics and physics) articles. In the mathematical community, there has been done a lot of work on scaling laws for problems arising in elasticity of thin films. We mention study of compressed thin films on a substrate (see \cite{bib-kohn-nguyen} and \cite{bib-bedrossiankohn} for two different settings) and of compressed thin film blisters \cite{bib-blisters-linear,bib-blisters,bib-jin-sternberg,bib-jin-sternberg2}, study of a cascade of wrinkles at the edge of floating elastic sheet \cite{bib-kohnnguyen-raft} (its motivation comes from~\cite{bib-benny-raft}) and work on elastic thin sheets with prescribed non-euclidean metric \cite{bib-bellakohn1}.
In all these problems, as %the thickness of the sheet 
$h \to 0$, deformations develop a microstructure. In particular, it is expected that  minimizers have complicated structure, due to which it is very difficult to study (or at least guess) the exact form of a minimizer -- hence in these problems it is hard to obtain more than just the energy scaling law. 

In a recent work, Brandman, Kohn, and Nguyen \cite{bib-brandman-kohn-nguyen} studied conical singularities in thin elastic sheets (so called ``d-cone'' problem). Since in this setting minimizers (as $h \to 0$) do not develop a microstructure (but rather a point singularity), it is possible to go little bit further and obtain the optimal prefactor in the energy scaling law (Olbermann and M\"uller \cite{bib-olbermannmuller-dcone} even managed to study the next term in the asymptotic expansion in $h$ of the ground state energy). 

%Before we state physics articles related to our problem, 

Finally, let us mention two more mathematical studies of ground states in problems where microstructure is expected. In \cite{bib-contibranching}, Conti studied asymptotic behavior of a ground state for the ``Kohn-M\"uller'' energy (see \cite{bib-bob+muller1}). He used clever local constructions to obtain local energy bounds to show that the ground state is asymptotically self-similar. In a different setting, Otto and Viehmann~\cite{bib-ottoviehmann} analyzed the ground state energy of a ferromagnetic bulk sample with strong uniaxial anisotropy in a regime featuring domain branching. At first glance, this work looks very similar to ours -- they also anisotropically rescale the energy by the expected microstructure length scale and study the rescaled problem. But compare to our setting, they obtain simplified variational problems defined on a fixed domain, whereas due to a non-local constraint we need to consider energies defined on a sequence of increasing 
domains. 

Compared to our problem, in physics literature there are works with very similar physical setting, but are interested in different questions and use different methods -- as already mentioned, starting from an ansatz, they look for explicit solutions and study their asymptotic properties. In~\cite{bib-benny-fissioning}, Davidovitch studied transition between wrinkled profiles with different periods at two opposite sides of a rectangular sheet, which is stretched in the transversal direction. He was mainly interested in the dependence of the length scale of wrinkles on the position and the boundary conditions. A similar question was addressed in \cite{bib:bldgblock}, where the authors studied behavior of one ``building block'' -- a deformation, which has wrinkled profile with period $1$ and $3$ at two opposite sides of a rectangular sheet. To finish, we mention \cite{bib-benny1,bib-bennynew,bib-geminard}, where stretching of an annular elastic sheet 
was studied. 

% Before we precisely state our result let us mention other related works. %put it in its proper context. 
% There are some models in elasticity for which the scaling law for the minimum of the energy was identified \cite{bib-blisters,bib-blisters-linear,bib-brandman-kohn-nguyen,bib-kohn-nguyen,bib-bedrossiankohn}, but only Conti~\cite{bib-contibranching} goes beyond that and obtains a specific information about the ground state. Other works, where the scaling low for the energy minimum was identified, includes for example study of domain branching in uniaxial ferromagnets~\cite{choksi1999domain}, study of microphase separation of diblock copolymers~\cite{choksi2001scaling}, or study of the ground state energy during the onset and destruction of the intermediate state in a type I superconductor~\cite{choksi2008ground}. Finally, let us mention work of Otto and Viehmann~\cite{bib-ottoviehmann} on domain branching in uniaxial ferromagnets, which, although studies quite a different problem, shares many similarities with our present work. 

% {\Large rewrite above paragraph -- e.g. divide into physical papers (benny;s building block etc), and mathematical (start with scaling laws result, explain little bit what that means, and then move on}

% \vskip.5cm
The rest of this Introduction gives a brief overview of our main achievements and organization of the paper. We start Section~\ref{setting} with the proper definition of the elastic energy $E_h$. After that, we state our main result -- Theorem~\ref{thm1}, which basically says 
\begin{equation}\label{intro1}
 \lim_{h \to 0} \frac{\min E_h - \mathcal{E}_0}{h} = \sigma \in (0,\infty),
\end{equation}
where $\sigma = \inf_{L>0} \min_{u} \S(u)$ and $\{\S(u)\}_{L>0}$ is a family of scalar variational functionals, which are minimized over some restricted set of functions $\A$ (see~\eqref{AL}). To study minimization of $E_h$ we first perform an anisotropic rescaling by a factor $L := h^{-1/2}$ in the $y$-variable (see Section~\ref{heuristics} for the motivation for the rescaling and the actual rescaling). 

To prove~\eqref{intro1}, in Section~\ref{sect:ub} we show the upper bound $$\limsup_{h \to 0} h^{-1} (\min E_h - \mathcal{E}_0) \le \sigma.$$ 
More precisely, for any $\epsilon > 0$ and any $h \in (0,h(\epsilon))$, we construct a test deformation for $E_h$ with energy smaller than $\mathcal{E}_0 + (\sigma+\epsilon)h$. Our construction is based on $u$, a minimizer of $\S(u)$. In particular, for the construction we require some regularity estimates for $u$, which are the content of Theorem~\ref{thm:u}. 

In Section~\ref{sect:lb} we prove the matching lower bound 
$$\liminf_{h \to 0} h^{-1} (\min E_h - \mathcal{E}_0) \ge \sigma.$$
Given $h > 0$ and any deformation $(w,u_3)$ (which satisfies the boundary condition), we use $u_3$ to construct a test function $u \in \A$ for $\S$ such that $h^{-1} (E_h(w,u_3) - \mathcal{E}_0) \ge \S(u)$ plus an error, which goes to $0$ as $h \to 0$. Since $u$ should belong to $\A$, we show that either the energy $E_h(w,u_3)$ is too large, or $u_3$ almost belongs to $\A$. In the latter case, we slightly modify $u_3$ to obtain $u \in \A$ (see Lemma~\ref{lm:deltal}) while not increasing its energy too much. 

Section~\ref{sect:scalar} is devoted to the proof of Theorem~\ref{thm:u}. Fixing $L \ge 1$, we study minimization of $\S$. We show existence of a minimizer, and study its regularity. Using Fourier series in the $y$-variable we rewrite the functional $\S$ and its minimizer $u$, and express $u$ in terms of its Fourier coefficients $a_k=a_k(x)$. We then study properties of $a_k$, in particular we derive an ODE for $a_k$. Observing an important role for quantities $\mu_k$ defined through $a_k' = \mu_k a_k k^2$, we show 
%This is then used (together with some comparison principle for $a_k$) to show 
exponential decay of frequencies with short wavelengths (large $k$), which in turn implies the desired regularity estimates. Finally, in the last section we present the proof of Lemma~\ref{lm:deltal}.

\vskip.2cm
\noindent
{\bf Notation:} Unless explicitly stated, all the functions $W_1,W_2,U_3$ and $w_1,w_2,u,u_3,v$ will be periodic in the $y$-variable with period equal to the size of their domain of definition. By $C,C_1,\ldots$ we will denote universal constants (always independent of $L$), which can possibly change their value from line to line. Also, by $a \lesssim b$, $a \gtrsim b$, $a \sim b$, we will mean $a \le C b$, $a \ge C b$, $b/C \le a \le C b$, respectively.

%\vskip1cm
%and \ref{thm:u}. In Section~\ref{heuristics} we present a heuristic argument why our result should be true and how to prove it. Before turning to the actual proof we sketch the main ideas of proofs of both theorems. Sections~\ref{sect:ub} and \ref{sect:lb} consist of the proof of Theorem~\ref{thm1} (the upper and lower bound, respectively). The proof of the upper bound (Section~\ref{sect:ub}) is based on some properties of minimizers of the simplified problems, which is content of Theorem~\ref{thm:u}. We prove Theorem~\ref{thm:u} in Section~\ref{sect:scalar}. Finally, in Section~\ref{pf:lm41} we present the proof of Lemma~\ref{lm:deltal}, which is used in the proof of the lower bound (Section~\ref{sect:lb}).

\section{Setting and the main result}\label{setting}

In this section we define the energy $E_h$ together with the assumptions we put on proper deformations. Before we state our main results, we shortly discuss the form of the energy and the motivation for the anisotropic rescaling. 

Let us now describe the precise setting. We consider a rectangular sheet $(-a,a) \times (-b,b)$ with prescribed non-euclidean metric of the form $m(x,y) = \dx^2 + (1+\kappa x) \dy^2$ (i.e., the reference configuration is not stress-free). Moreover, we assume the sheet is stretched in the $x$-direction (by applying dead loads at $x=\pm a$) and the deformation is periodic in the $y$-direction. Finally, we use a small-slope geometrically linear (F\"oppl-von K\'arm\'an) approximation for the elastic energy of the film. The average energy per unit thickness we consider has the form:
\begin{multline}\nonumber%\label{pbm1}
% E_{h,\kappa,T,a,b}(W,U_3) := \\ 
\int_{-a}^a \int_{-b}^b \left|e(W) + \frac{1}{2} \nabla U_3 \otimes \nabla U_3 - \kappa x e_2 \otimes e_2 \right|^2 + h^2 \left| \nabla^2 U_3 \right|^2 \dxy - T \int_{-b}^b W_1(1,y) - W_1(-1,y) \dy, 
\end{multline}
where $W = (W_1(x,y),W_2(x,y)) : [-a,a] \times [-b,b) \to \R^2$ is the in-plane displacement with $e(W) = (\nabla W + \nabla^t W)/2$ being its symmetric gradient, and $U_3 = U_3(x,y) : [-a,a] \times [-b,b) \to \R$ is the out-of-plane displacement. We assume that the thickness of the film $h > 0$, $\kappa > 0$ describes the amount of shrinking in the metric, and we applied dead loads of strength $T > 0$. We assume $W$ and $U_3$ satisfy the periodic boundary conditions, i.e., $W$ and $U_3$ are $2b$-periodic functions in the $y$-variable. 

Using rescaling $(W(x,y),U_3(x,y)) = (\kappa^{-1} \hat W(\kappa x,\kappa y),\kappa^{-1} \hat U_3(\kappa x, \kappa y)$ we see that without loss of generality we can assume $\kappa = 1$. Similarly, using rescaling $(W,U_3) = (T \hat W,T^{1/2} \hat U_3)$ we can assume that $T = 2$. Finally, to simplify the notation we will assume that $a=b=1$, i.e. we consider only the domain $[-1,1] \times [-1,1)$ (as we will see, this restriction will not play any role in the argument). Assuming this, for $h>0$ we define 
\begin{multline}\label{pbm1}
 E_{h}(W,U_3) := \\ \frac{1}{2} \int_{-1}^{1} \int_{-1}^{1} \left|e(W) + \frac{1}{2} \nabla U_3 \otimes \nabla U_3 - x e_2 \otimes e_2 \right|^2 + h^2 \left| \nabla^2 U_3 \right|^2 \dxy - \int_{-1}^1 W_1(1,y) - W_1(-1,y) \dy,
\end{multline}
where %$\Oomega = [-1,1] \times [-1,1)$ and 
$(W,U_3)$ are as above ($2$-periodic functions in the $y$-variable), and the energy is normalized per unit length in the $y$-variable (hence the prefactor $1/2$). 

% Let us first comment on the form of the energy. First, for simplicity we used the ``small slope`` F\"oppl-von K\'arm\'an energy, where the additional term $\kappa x e_2 \otimes e_2$ in the membrane part of the energy represents the fact the sheet is not in the stress-free configuration (in fact, it prefers to have different length in the $y$-direction). Let us point out that the choice of the linear growth in $x$ of this term comes from the original motivation by the Lam\' e problem and it is the main reason for an interesting behavior of the optimal deformation. We will see later that replacing $x$ by e.g. $x^2$ would significantly simplify the the solutions of the limiting problem. 

Based on similar arguments as in \cite{bib-bellakohn2} it is expected that there exist a number $\mathcal{E}_0$ and two constants $0 < C_0 < C_1 < \infty$ such that for any $h > 0$:
\begin{equation}\label{pbm2}
 \mathcal{E}_0 + C_0 h \le \min_{(W,U_3)} E_h(W,U_3) \le \mathcal{E}_0 + C_1 h.
\end{equation}
In our setting, the number $\mathcal{E}_0 = -5/3$ is the minimum of the relaxed energy $$\frac{1}{2} \left[ \int_{\Oomega} \left(e(W) - \kappa x e_2 \otimes e_2 \right)_+^2 \dxy - 2\int_{-1}^1 W_1(1,y) - W_1(-1,y) \dy \right],$$
where $(A)_+ := \min_{M \ge 0} |M+A|$ denotes the non-negative part of the matrix. 
%can be obtained by minimizing the relaxed problem (which is independent of $h$). % and then estimating the next term in the expansion. 
To prove the right-hand side inequality one constructs a deformation with energy less than $\mathcal{E}_0 + C_1h$. It is easy to see that this deformation should have small (ideally vanishing) out-of-plane displacement $U_3$ in the region $x < 0$. Moreover, a simple scaling argument suggests that wrinkles near $x = 1$ should have length scale of order $h^{1/2}$. To study the limit as $h \to 0$, we prefer this length scale to be of order $1$. To achieve this we rescale anisotropically in the $y$-variable by a factor $h^{-1/2}$. Using this method we obtain our main result:
\begin{theorem}\label{thm1}
 There exists $0 < \sigma < \infty$ such that
\begin{equation}\nonumber
 \lim_{h \to 0} \frac{\min_{(W,U_3)} E_h(W,U_3) - \mathcal{E}_0 }{h} = \sigma,
\end{equation}
where we minimize over functions $W=(W_1,W_2),U_3 : [-1,1] \times [-1,1) \to \mathbb{R}$, which are $2$-periodic in the $y$-variable, and $\mathcal{E}_0 := -5/3$. Moreover,
\begin{equation}\nonumber
 \sigma = \inf_{L>0} \sigma_L, % = \inf_{L>0} \inf_{u \in \A} \S(u),  %\lim_{L \to \infty} \sigma_L,
\end{equation}
where 
\begin{equation}\label{sigmal}
 \S(u):= \int_0^1 \fint_{-L}^L u_{,x}^2 + u_{,yy}^2 \dxy,\quad \sigma_L := \inf_{u \in \A} \S(u)
\end{equation}
with 
\begin{equation}\label{AL}
\A := \left\{ \begin{matrix} u : [0,1]\times[-L,L) \to \R, u(0,\cdot)=0,\\ \mathrm{for \ (a.e.)\ }x \in [0,1] : u(x,\cdot) \mathrm{\ is\ } 2L\mathrm{-periodic\ and\ } \fint_{-L}^L u_{,y}^2(x,y) \dy = 2x 
\end{matrix} \right\}.
\end{equation}
\end{theorem}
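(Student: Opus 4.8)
\emph{Reduction.} The proof consists of the two matching estimates $\limsup_{h\to0}h^{-1}(\min E_h-\mathcal{E}_0)\le\sigma$ and $\liminf_{h\to0}h^{-1}(\min E_h-\mathcal{E}_0)\ge\sigma$. The starting point is to eliminate the in-plane displacement from the ``membrane plus loading'' part by completing the square. Writing $M:=e(W)+\tfrac12\nabla U_3\otimes\nabla U_3-x\,e_2\otimes e_2$ and using $\partial_xW_1=M_{11}-\tfrac12U_{3,x}^2$ together with $\int_{-1}^1\!\!\int_{-1}^1\partial_xW_1\dxy=\int_{-1}^1 W_1(1,y)-W_1(-1,y)\dy$, one gets the exact identity
\[
E_h(W,U_3)=\tfrac12\int_{-1}^1\!\!\int_{-1}^1\big|M-e_1\otimes e_1\big|^2\dxy+\tfrac12\int_{-1}^1\!\!\int_{-1}^1U_{3,x}^2\dxy+\tfrac12h^2\int_{-1}^1\!\!\int_{-1}^1|\nabla^2U_3|^2\dxy-2 .
\]
Since $\int_{-1}^1 e(W)_{22}\dy=0$, $\tfrac12\nabla U_3\otimes\nabla U_3\ge0$, and the positive part of a symmetric matrix is monotone, a one–dimensional optimization gives $\tfrac12\int\!\int|M-e_1\otimes e_1|^2\ge\tfrac13$ for all admissible $(W,U_3)$, the value $\tfrac13$ being attained in the limit by the planar field $W=(x,0)$, $U_3=0$; as $\mathcal{E}_0=-\tfrac53=-2+\tfrac13$ this re-proves the $h\to0$ form of \eqref{pbm2} and yields $E_h-\mathcal{E}_0\ \ge\ \big[\tfrac12\int\!\int|M-e_1\otimes e_1|^2-\tfrac13\big]+\tfrac12\int\!\int U_{3,x}^2+\tfrac12h^2\int\!\int|\nabla^2U_3|^2$, with the bracket nonnegative. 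Performing the anisotropic rescaling $U_3(x,y)=h^{1/2}u(x,h^{-1/2}y)$ with $L:=h^{-1/2}$ turns $\tfrac12\int\!\int U_{3,x}^2+\tfrac12h^2\int\!\int U_{3,yy}^2$ into $h\int_0^1\lint(u_{,x}^2+u_{,yy}^2)\dxy$ (plus $O(h^2)$ from the remaining second derivatives of $U_3$), i.e. exactly $h\,\S(u)$; this is the origin of the scalar functional \eqref{sigmal} and of the class \eqref{AL}.

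\emph{Lower bound.} Given any $(W,U_3)$ (normalized so that $U_3(0,0)=0$), set $u:=h^{-1/2}U_3(\cdot,h^{1/2}\cdot)$ on $[0,1]\times[-L,L)$ and $\delta(x):=\lint u_{,y}^2\dy-2x=\int_{-1}^1U_{3,y}^2\dy-2x$. Estimating the $(2,2)$-entry of $M-e_1\otimes e_1$ on $x\in(0,1)$ by Jensen's inequality bounds the bracket from below by $c\,\|\delta\|_{L^2(0,1)}^2$ with $c>0$, so after discarding the $U_{3,xx}$ and $U_{3,xy}$ bending terms the previous display gives $E_h-\mathcal{E}_0\ge h\,\S(u)+c\,\|\delta\|_{L^2(0,1)}^2$. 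Now comes the dichotomy from the Introduction: if $\|\delta\|_{L^2}^2\ge\eta$ for a fixed $\eta>0$, then $h^{-1}(E_h-\mathcal{E}_0)\to\infty$ and such $(W,U_3)$ is irrelevant for the $\liminf$; otherwise $u$ almost lies in $\A$, and Lemma~\ref{lm:deltal} produces $\tilde u\in\A$ with $\S(\tilde u)\le\S(u)+\omega(\|\delta\|_{L^2})$ for a modulus $\omega$ with $\omega(0^+)=0$, the estimate being uniform in $L$. Together with $\S(\tilde u)\ge\sigma_L\ge\sigma$ this gives $h^{-1}(E_h-\mathcal{E}_0)\ge\sigma-\omega(\|\delta\|_{L^2})-o(1)$. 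Applying this to a minimizer, for which the available bound $\min E_h-\mathcal{E}_0\le C_1h$ forces $\|\delta\|_{L^2}^2\le c^{-1}C_1h\to0$, yields $\liminf_{h\to0}h^{-1}(\min E_h-\mathcal{E}_0)\ge\sigma$.

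\emph{Upper bound.} Fix $\epsilon>0$, pick $L_0>0$ and $u_0\in\A$ (at parameter $L_0$) with $\S(u_0)<\sigma+\epsilon$, and use Theorem~\ref{thm:u} for the regularity of $u_0$. For small $h$ put $N:=\lfloor h^{-1/2}/L_0\rfloor$ and $L:=h^{-1/2}/N$, so that $L\to L_0$ and $2h^{-1/2}$ is an integer multiple of $2L$, and let $u$ be the natural rescaling of $u_0$ to parameter $L$; then $\S(u)\to\S(u_0)$ and $u$ extends $2h^{-1/2}$-periodically to fill $[-h^{-1/2},h^{-1/2})$. Define the test deformation by $U_3(x,y):=h^{1/2}u(x,h^{-1/2}y)$ on $x\in[0,1]$ (this vanishes at $x=0$ since $u_0(0,\cdot)=0$) and $U_3\equiv0$ on $x\in[-1,0]$; take $W=(x,0)$ on $x\in[-1,0]$, and on $x\in[0,1]$ let $W$ be a near-minimizer of $\tfrac12\int|M-e_1\otimes e_1|^2$ for this fixed $U_3$. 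Inserting into the identity above, the region $x<0$ contributes exactly $\mathcal{E}_0$ plus the $\tfrac13$ that cancels the $-\tfrac13$; the terms $\tfrac12\int U_{3,x}^2$ and $\tfrac12h^2\int U_{3,yy}^2$ contribute $h\,\S(u)$, the other bending terms $O(h^2)$, and a thin boundary layer near $x=0$ that mollifies the jump of $U_{3,x}$ there (controlled by the regularity of $u$) contributes $o(h)$. The one delicate term is the residual membrane energy $\tfrac12\int_{x>0}|M-e_1\otimes e_1|^2$, which for the optimal $W$ equals $\tfrac12$ times the squared $L^2$-norm of the incompatible part of $\tfrac12\nabla U_3\otimes\nabla U_3$ and is hence controlled by $\|\det\nabla^2U_3\|_{H^{-2}}^2$; since $\int_{-1}^1\det\nabla^2U_3\dy=0$ for $u\in\A$, this quantity is purely $y$-oscillatory on the wrinkle scale $h^{1/2}$, and combining the regularity of $u$ with the divergence structure $\det\nabla^2U_3=\partial_x(U_{3,x}U_{3,yy})-\partial_y(U_{3,x}U_{3,xy})$ one shows it is $o(h)$. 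Thus $E_h\le\mathcal{E}_0+(\S(u)+o(1))h\le\mathcal{E}_0+(\sigma+2\epsilon)h$ for small $h$, and $\epsilon\downarrow0$ finishes.

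\emph{Finiteness and the main obstacle.} That $\sigma<\infty$ follows by exhibiting a single $u\in\A$ with finite $\S$ (a single wrinkle whose number of oscillations grows like a small negative power of $x$ as $x\to0$ has $\S(u)<\infty$); that $\sigma>0$ follows from the lower-bound computation with $\delta\equiv0$, which shows that for every $L$ and every $u\in\A$ the constraints $u(0,\cdot)=0$ and $\lint u_{,y}^2=2x$ cannot both hold without $\S(u)\ge c_0>0$, in agreement with \eqref{pbm2}. The step I expect to be hardest is the control of the incompatible membrane energy in the upper bound --- showing that the coupling of $\nabla U_3\otimes\nabla U_3$ with $e(W)$, measured by $\det\nabla^2U_3$, costs only $o(h)$ --- which is exactly what forces the fine regularity estimates of the scalar minimizer collected in Theorem~\ref{thm:u}; the dual difficulty on the lower-bound side is concentrated in Lemma~\ref{lm:deltal} and in keeping its error uniform as $L=h^{-1/2}\to\infty$.
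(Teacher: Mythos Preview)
Your global architecture matches the paper exactly: anisotropic rescaling $L=h^{-1/2}$, the completed-square identity, an upper bound built from a minimizer of $\S$ via Theorem~\ref{thm:u}, and a lower bound reduced to Lemma~\ref{lm:deltal}. Two points, however, diverge from the paper and one of them is a genuine mis-statement.

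\textbf{Upper bound.} The paper does \emph{not} minimize over $W$ abstractly and invoke $\|\det\nabla^2 U_3\|_{H^{-2}}$. Instead it writes down $w_1,w_2$ \emph{explicitly} (see \eqref{defwu}): $w_2$ is chosen so that $w_{2,y}+u_{3,y}^2/2=\varphi_\delta^2(x)x$, making the $(2,2)$ term small, and $w_1$ is then chosen so that the off-diagonal term \eqref{deriv17a} vanishes \emph{identically}; all remaining terms are estimated one by one using \eqref{reg1}--\eqref{reg6}. Your $H^{-2}$ route is conceptually appealing but you have not addressed the boundary conditions: the domain is periodic in $y$ but an interval in $x$, and your $W$ must also match the planar field at $x=0$, so ``$\min_W \tfrac12\int|M-e_1\otimes e_1|^2 \lesssim \|\det\nabla^2 U_3\|_{H^{-2}}^2$'' is not available off the shelf. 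The paper's explicit construction sidesteps this completely. (Also a small slip: the planar field $W=(x,0)$, $U_3=0$ gives $\tfrac12\int|M-e_1\otimes e_1|^2=\tfrac23$, not $\tfrac13$; the value $\tfrac13$ is only reached in the wrinkled relaxed limit.)

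\textbf{Lower bound.} Your formulation of Lemma~\ref{lm:deltal} as ``$\S(\tilde u)\le \S(u)+\omega(\|\delta\|_{L^2})$ with $\omega$ uniform in $L$'' is \emph{not} what the paper proves, and it is not clear such a uniform modulus exists. The paper's lemma is instead a statement about the \emph{penalized} functional:
\[
\sigma_L \;\le\; \S(v)+L^2\!\int_{-1}^{1}\!\Big(\lint v_{,y}^2/2-\vt\Big)^2\dx \;+\;\delta_L,\qquad \delta_L\to 0.
\]
The point is that the penalty $L^2\|\delta\|_{L^2}^2$ is \emph{exactly} the term that already sits inside $h^{-1}(E_h-\mathcal{E}_0)$ (it is the bracket divided by $h$), so no separate smallness of $\|\delta\|_{L^2}$ is needed: one gets $h^{-1}(\min E_h-\mathcal{E}_0)\ge \sigma_L-\delta_L\to\sigma$ directly. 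Your dichotomy (bounding $\|\delta\|_{L^2}^2\lesssim h$ and then feeding it into an $L$-uniform modulus) would only yield $L^2\|\delta\|_{L^2}^2\lesssim 1$, which is bounded but not small; the argument is rescued precisely because the penalty coefficient in the lemma is $L^2$, matching the energy, and the residual error $\delta_L$ depends only on $L$. Rewriting your lower bound to use the lemma in this penalized form fixes the gap with no extra work.
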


In the proof of Theorem~\ref{thm1} we will need some properties of a global minimizer (ground state) of the energy defined in~\eqref{sigmal}, which should hold independently of $L \ge 1$:

\begin{theorem}\label{thm:u}
 Let $L \ge 1$. Then there exists a global minimizer $u \in \A$ of the energy 
 \begin{equation}\nonumber
  \S(u) = \int_0^1 \lint u_{,x}^2 + u_{,yy}^2 \dxy,
 \end{equation}
 $u$ is odd in the $y$-variable, and for every $x \in (0,1)$ satisfies:
 \begin{align}
  \lint u^2(x,y) \dy &\le C x^2 (|\ln x|+1),\label{reg1}\\
  \lint u_{,x}^2(x,y) \dy &\le C (|\ln x|+1),\label{reg2}\\  
  \lint u_{,xx}^2(x,y) \dy &\le C x^{-2} (|\ln x|^{7}+1),\label{reg3}\\
  \lint u_{,xy}^2(x,y) \dy &\le C x^{-1} (|\ln x|^2+1),\label{reg4}\\
  \lint u_{,yy}^2(x,y) \dy &\le C (|\ln x|+1),\label{reg5}\\
  \lint u_{,xyy}^2(x,y) \dy &\le C x^{-2} (|\ln x|^{3}+1),\label{reg6}
 \end{align} 
 where $C$ does not depend on $L$. 
\end{theorem}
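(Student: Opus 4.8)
Let me think about how to prove this theorem about existence and regularity of a minimizer of the scalar functional.

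First, existence. The functional is $\S(u) = \int_0^1 \fint_{-L}^L u_{,x}^2 + u_{,yy}^2 \, dx\, dy$ minimized over $\A$. The natural space is $u$ with $u_{,x} \in L^2$ and $u_{,yy} \in L^2$. The constraint set $\A$ requires $u(0,\cdot) = 0$, $2L$-periodicity, and the nonlinear constraint $\fint_{-L}^L u_{,y}^2(x,y)\,dy = 2x$ for a.e. $x$. This last constraint is not weakly closed in an obvious way — $u_{,y}$ needs to converge strongly. But $u_{,yy} \in L^2$ and periodicity give control on $u_{,y}$ in $H^1_y$, so by Rellich (in $y$) plus the bound on $u_{,x}$ we should get strong $L^2$ convergence of $u_{,y}$ along a minimizing sequence, hence the constraint passes to the limit. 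One subtlety: we need $\fint u_{,y}^2 = 2x$ to hold for the limit, and we need coercivity — $\int u_{,x}^2$ controls $u$ in $x$, the constraint controls $\int u_{,y}^2$, and periodicity + $u_{,yy}$ controls the rest. Also one should check $\A$ is nonempty with finite energy (e.g., a simple wrinkling construction with $O(1)$ wavelength gives finite $\S$).

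Second, oddness in $y$. Since the constraint and the functional are invariant under $y \mapsto -y$ and under $u \mapsto -u$, and the functional is convex in $u$ (it's quadratic!), the symmetrization $\tilde u(x,y) = (u(x,y) - u(x,-y))/2$ is admissible (the constraint $\fint \tilde u_{,y}^2 = 2x$ needs checking — this uses that if $u$ is a minimizer then $u(x,-y)$ is also a minimizer, and then convexity/strict convexity forces $u(x,y) = -u(x,-y)$). Actually the cleanest route: $u(x,-y)$ satisfies the same constraint (since $\fint u_{,y}(x,-y)^2 dy = \fint u_{,y}(x,y)^2 dy = 2x$) and has the same energy, so it's also a minimizer; if the minimizer were unique we'd be done, but uniqueness isn't clear because the constraint set is non-convex. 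Still, $\S$ is strictly convex in $\nabla$-type quantities, so any convex combination of two minimizers would have strictly smaller energy unless they agree — but convex combinations may leave $\A$. The paper likely handles this via the Fourier representation: write $u(x,y) = \sum_k a_k(x) \sin(\pi k y/L) + b_k(x)\cos(\pi k y /L)$; the functional decouples as $\sum_k \int_0^1 ((a_k')^2 + (b_k')^2) + (\pi k/L)^4(a_k^2+b_k^2)\,dx$ and the constraint becomes $\sum_k (\pi k/L)^2 (a_k^2+b_k^2) = 4x$ (after normalization). Then for each $k$, only the quantity $a_k^2 + b_k^2$ and $(a_k')^2 + (b_k')^2$ matter, and by a rotation in the $(a_k,b_k)$ plane one can assume $b_k \equiv 0$, i.e., $u$ is odd. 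This is the step I'd expect the paper to take, and it simultaneously sets up the ODE analysis.

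Third — and this is the heart — the regularity estimates \eqref{reg1}--\eqref{reg6}. Having reduced to $u(x,y) = \sum_{k\ge 1} a_k(x)\sin(\pi k y/L)$, set $\lambda_k = (\pi k/L)^2$, so $\S(u) = \sum_k \int_0^1 (a_k')^2 + \lambda_k^2 a_k^2\,dx$ and the constraint is $\sum_k \lambda_k a_k^2(x) = 4x$ (up to constants), with $a_k(0) = 0$. The Euler--Lagrange equations, with a Lagrange multiplier $\Lambda(x)$ for the pointwise-in-$x$ constraint, read $-a_k'' + \lambda_k^2 a_k = \Lambda(x)\lambda_k a_k$, i.e. $a_k'' = \lambda_k(\lambda_k - \Lambda(x)) a_k$. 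Introduce $\mu_k$ via $a_k' = \mu_k \lambda_k a_k$ (as the paper says, "$a_k' = \mu_k a_k k^2$") — then $\mu_k$ satisfies a Riccati-type equation $\mu_k' \lambda_k + \mu_k^2 \lambda_k^2 = \lambda_k(\lambda_k - \Lambda)$. The plan is: (1) from the constraint $\sum_k \lambda_k a_k^2 = 4x$ one gets $\fint u^2 = \sum a_k^2 \le$ (something) $\cdot x$ for the low modes and the high modes are small — actually $\fint u^2 = \sum_k a_k^2$ and $\sum_k \lambda_k a_k^2 = 4x$ with $\lambda_k \gtrsim 1$ for $k\ge 1$ gives $\fint u^2 \le 4x$ immediately; the logarithm in \eqref{reg1} must come from a more careful accounting (perhaps $\fint u^2$ against a weighted sum). (2) The key dichotomy: modes with $\lambda_k$ large compared to the relevant scale decay exponentially — if $\Lambda(x) \le \lambda_k/2$, say, then $a_k'' \ge (\lambda_k^2/2) a_k$ forces exponential behavior, and combined with $a_k(0)=0$ and the integrability $\int (a_k')^2 + \lambda_k^2 a_k^2 < \infty$ one concludes $a_k$ is exponentially small. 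This cutoff happens at $k \sim $ something like $x^{-1/2}$ or determined by the size of $\Lambda$. (3) Estimate $\Lambda(x)$: from the constraint and the EL equations, differentiating $\sum_k \lambda_k a_k^2 = 4x$ twice and using the ODE gives an expression for $\Lambda$ in terms of the $a_k$'s and $\mu_k$'s; one expects $\Lambda(x) \sim |\ln x|$ or a power of it, reflecting that more and more modes are active as $x \to 0$. (4) Bootstrap: once $\Lambda$ is controlled and the effective number of active modes $N(x)$ is controlled (both polylogarithmic, or $N(x) \sim x^{-1/2}(\ln)^{O(1)}$), the estimates \eqref{reg2}--\eqref{reg6} follow by summing $\sum_{k \le N(x)} \lambda_k^j a_k^2$ and $\sum \lambda_k^j (a_k')^2$ with the decay tail contributing negligibly; the powers of $x^{-1}$ and $x^{-2}$ and the powers of $|\ln x|$ come from how many derivatives (powers of $\lambda_k \le \lambda_{N(x)} \sim N(x)^2/L^2$) one sums against the profile.

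The main obstacle, I expect, is step (3)–(4): getting sharp enough control on the Lagrange multiplier $\Lambda(x)$ and on the "frequency cutoff" $N(x)$ as $x \to 0$, uniformly in $L \ge 1$. The exponential-decay-of-high-frequencies argument is clean in spirit but making it quantitative enough to close the bootstrap — and in particular tracking the exact powers of $|\ln x|$ (note the asymmetry: $x^2(|\ln x|+1)$ in \eqref{reg1} but $x^{-2}(|\ln x|^7 + 1)$ in \eqref{reg3}) — requires careful iteration, presumably using the $\mu_k$ quantities to convert the second-order ODE for $a_k$ into manageable monotonicity/Gronwall statements. The $L$-uniformity is what rules out the naive approach of just treating this as a fixed elliptic problem: as $L \to \infty$ the spectrum $\{\lambda_k\}$ becomes dense, so all estimates must be phrased in a way insensitive to the spacing of frequencies, i.e., in terms of integrals/sums that have $L$-independent bounds.
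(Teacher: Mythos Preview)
Your outline captures the broad architecture of the paper's argument (Fourier in $y$, Euler--Lagrange with a Lagrange multiplier, the Riccati variables $\mu_k$, exponential decay of high modes), but there is a genuine gap at what you label steps (3)--(4): you say ``one expects $\Lambda(x) \sim |\ln x|$'' but give no mechanism for an \emph{upper} bound on the Lagrange multiplier, and this is the hardest part of the proof. The paper's route is: first prove a \emph{lower} bound $\lambda \ge 1/x$ (as a measure) by testing the Euler--Lagrange system against $a_k k^2$ and using Cauchy--Schwarz with the constraint; this drives the analysis of the $\mu_k$, yielding $|\mu_k|<1$ on $[k^{-2},1]$ and, crucially, the monotonicity $\mu_k < \mu_m$ for $k>m$ on $[k^{-2},1)$. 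Then one proves a \emph{gap estimate}: the ratio between consecutive non-vanishing frequencies is bounded by a universal constant $C_{\mathrm{gap}}$. This step is not a formality --- it requires a relaxation of the equality constraint to an inequality (so that competitors can be built freely), a construction showing $\int_0^{x_0} B \lesssim x_0$ for the bending density $B=\sum a_k^2 k^4$, and a somewhat delicate argument exploiting concavity of low-frequency coefficients on $(0,k^{-2})$. Only with the gap estimate can one, for each dyadic scale $x_0$, locate a non-vanishing frequency $k$ with $k^{-2}\sim x_0$ and then read off from the $\mu_k$-ODE that $\lambda([x_0,2x_0))\lesssim 1$, hence $\lambda([x_0,1))\lesssim |\ln x_0|$. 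This is what feeds Lemma-style control of $|\{\mu_k \ge -1+\Delta\}|$ and closes the bootstrap to the stated powers of $|\ln x|$. Without this mechanism your step (3) is a wish, not an argument.

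Two smaller but real issues. First, your claim ``$\lambda_k \gtrsim 1$ for $k\ge 1$'' is false for large $L$, since in your normalization $\lambda_1 = (\pi/L)^2 \to 0$; the paper repairs this by proving (via a competitor that moves mass from mode $k$ to mode $2k$) that all frequencies below a fixed $L$-independent threshold vanish identically for the minimizer. This is needed not only for your step (1) but, more seriously, for the pointwise lower bound on $\sum a_k^2 k^4$ that converts the integrated bound on $\lambda$ into the pointwise $\lambda(x)\lesssim x^{-1}(|\ln x|^3+1)$ used in the $u_{,xx}$ estimate. Second, your ``rotation in the $(a_k,b_k)$ plane'' for oddness does not work as stated, since an $x$-independent rotation will not send $b_k(\cdot)$ to zero; the correct move is to replace the sine coefficient by $c_k=\sqrt{a_k^2+b_k^2}$ and use $(c_k')^2 \le (a_k')^2+(b_k')^2$, which preserves the constraint and does not increase the energy.
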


%For function $f = f(x)$, defined on an interval $I \subset \R$, both $f'$ and $\dot f$ will denote derivative of $f$ with respect to $x$. 

\section{Rescaling and some heuristics}\label{heuristics}

% {\LARGE here could be an outline of the proof}

In this section we will perform the anisotropic rescaling in the $y$-variable and compute the rescaled energy $\E(w,u_3)$. After that we present some heuristic arguments to identify which terms in the rescaled energy $\E$ are important (i.e., the ones which in the end contribute to $\S$) and which are not important (i.e., those which vanish in the limit $h \to 0$). We conclude this section with some simple observations (Lemma~\ref{lm:sigmal}) about $\sigma_L$ for different $L$, which we will need later. 

Let $h > 0$ be fixed. Given $(W,U_3)$, periodic functions in $y$, we define a rescaled deformation $(w,u_3)$, defined in the rescaled domain
\begin{equation}\nonumber
 \Omega := [-1,1] \times [-L,L], \quad L := h^{-1/2}
\end{equation}
by
\begin{align*}
 w_1(x,y) &:= W_1(x,L^{-1}y), \\
 w_2(x,y) &:= L W_2(x,L^{-1}y), \\
 u_3(x,y) &:= L U_3(x,L^{-1}y).
\end{align*}
We express the energy $E_h(W,U_3)$ using the rescaled deformation $w_1,w_2,u_3$:
\begin{align}\nonumber
E_h(W,U_3) &= \int_{-1}^1 \fint_{-1}^1 \left|W_{1,x} + U_{3,x}^2/2\right|^2 + \frac{1}{2}\left|W_{1,y} + W_{2,x} + U_{3,x} U_{3,y}\right|^2 + \left|W_{2,y} + U_{3,y}^2/2 - x\right|^2 \dxy
\\ & \quad + h^2 \int_{-1}^1 \fint_{-1}^1 U_{3,xx}^2 + 2U_{3,xy}^2 + U_{3,yy}^2 \dxy - 2 \fint_{-1}^{1} W_1(1,y) - W_1(-1,y)  \dy \nonumber
\\ &= \!\int_{-1}^{1}\! \lint \left|w_{1,x} + \!L^{-2} u_{3,x}^2/2\right|^2\! +\! \frac{L^{-2}}{2}\left|L^2 w_{1,y} + w_{2,x} + u_{3,x} u_{3,y}\right|^2 + \left|w_{2,y} + u_{3,y}^2/2 - x\right|^2 \dxy \nonumber
\\ &\quad + L^{-4} \int_{-1}^{1} \lint \left( L^{-2} u_{3,xx}^2 + 2 u_{3,xy}^2 + L^{2} u_{3,yy}^2\right) \dxy - 2\lint w_1(1,y) - w_1(-1,y) \dy  \nonumber
\\ &= \int_{-1}^1 \lint \left( w_{1,x} + L^{-2} u_{3,x}^2/2 - 1 \right)^2 - 1 \dxy + \int_{-1}^0 \lint |w_{2,y} + u_{3,y}^2/2 - x|^2 \dxy \label{deriv15}
\\ &\quad+ \int_{0}^1 \lint |w_{2,y} + u_{3,y}^2/2 - x|^2 \dxy \label{deriv16}
\\ &\quad+ L^{-2} \int_{-1}^1 \lint \left|L^2 w_{1,y} + w_{2,x} + u_{3,x} u_{3,y}\right|^2/2 \dxy \label{deriv17}
\\ &\quad + L^{-2} \int_{-1}^1 \lint u_{3,x}^2 + u_{3,yy}^2 \dxy \label{deriv18}
\\ &\quad + L^{-4} \int_{-1}^1 \lint 2u_{3,xy}^2 + L^{-2} u_{3,xx}^2 \dxy \label{deriv19}
\\ &=: \E_{L}(w,u_3),\label{RLenergy}
\end{align}
where the term $\left( w_{1,x} + L^{-2} u_{3,x}^2/2 - 1 \right)^2$ in~\eqref{deriv15} is obtained by writing $w_1(1,y) - w_1(-1,y) = \int_{-1}^1 w_{1,x} \dx$ and by completing the square (after collecting different terms).

We see that all terms in~(\ref{deriv15}-\ref{deriv19}) (up to a constant term in \eqref{deriv15}) are non-negative. Hence, to minimize the energy it is important to understand (or at least guess) which terms can be made small (of order less than $h = L^{-2}$), and which terms will in the end contribute to the limiting energy. 
%Based on the previous work~\cite{bib-bellakohn2}, 
Heuristically we expect that 
%that the class of deformations which will (almost) achieve optimal energy 
(at least for large $L$):
\begin{itemize}
 \item the term $\left( w_{1,x} + L^{-2} u_{3,x}^2/2 - 1 \right)^2$ from~\eqref{deriv15} is small and $w_{2,y} + u_{3,y}^2/2 \approx 0$ in $[-1,0]\times[-L,L]$, therefore terms from~\eqref{deriv15} should behave like $\mathcal{E}_0$:
 \begin{multline}\nonumber
  \int_{-1}^1 \lint \left( w_{1,x} + L^{-2} u_{3,x}^2/2 - 1 \right)^2 - 1 \dxy + \int_{-1}^0 \lint |w_{2,y} + u_{3,y}^2/2 - x|^2 \dxy \\ \approx \int_{-1}^1 \lint -1 + \int_{-1}^0 \lint x^2 \dx = \mathcal{E}_0;
 \end{multline}
 \item $w_{2,y} + u_{3,y}^2/2 \approx x$ in $[0,1] \times [-L,L]$ and $-L^2 w_{1,y} \approx  w_{2,x} + u_{3,x} u_{3,y}$ in $[-1,1]\times[-L,L]$, and so both~\eqref{deriv16} are~\eqref{deriv17} are small. Because of the first requirement and periodicity of $w_2$ in $y$ one has $\lint u_{3,y}^2/2 \dy \approx x - \lint w_{2,y} \dy = x$;
 \item term~\eqref{deriv19} includes a small prefactor $L^{-4}$ (small compared to $L^{-2}$), and will not need to be considered.
\end{itemize}
Altogether we expect that if $w_1,w_2,u_3$ has (almost) optimal energy $\E_L$, then 
\begin{equation}\nonumber
 \E_L(w,u_3) = \mathcal{E}_0 + L^{-2} \int_0^1 \lint u_{3,x}^2 + u_{3,yy}^2 \dxy + o(L^{-2}),
\end{equation}
and $u_3$ has to satisfy $\lint u_{3,y}^2(x,y) \dy \approx 2x$. We observe that this is consistent with (and, in fact, precise version of this statement is equivalent to) Theorem~\ref{thm1}. 

Before we start with the proof of Theorem~\ref{thm1}, let us show the following simple properties of $\sigma_L$:
\begin{lemma}\label{lm:sigmal}
 Let $L > 0, \alpha \ge 1$, and $N \in \mathbb{N}$. Then 
\begin{equation}\label{sigma:growth}
 \sigma_{\alpha L} \le \alpha^2 \sigma_L
\end{equation}
and
\begin{equation}\label{sigma:dec}
 \sigma_{NL} \le \sigma_L.
\end{equation}
Moreover, 
\begin{equation}\label{sigmale}
 \sup_{L \ge 1} \sigma_L \le 4\sigma_1,
\end{equation}
and 
\begin{equation}\label{def:sigma}
 \sigma = \inf_{L \ge 1} \sigma_L = \lim_{L \to \infty} \sigma_L.
\end{equation}
\end{lemma}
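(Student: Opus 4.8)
The plan is to prove the four assertions in Lemma~\ref{lm:sigmal} essentially by elementary scaling and tiling constructions, since the competitor class $\A$ interacts nicely with both dilations in $y$ and periodic repetition.

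First I would establish~\eqref{sigma:growth}. Given a near-optimal $u \in \mathcal{A}_L$, define $v(x,y) := \alpha\, u(x, y/\alpha)$ on $[0,1]\times[-\alpha L,\alpha L)$. Then $v$ is $2\alpha L$-periodic in $y$, $v(0,\cdot)=0$, and a change of variables gives $\fint_{-\alpha L}^{\alpha L} v_{,y}^2 \dy = \fint_{-L}^L u_{,y}^2 \dy = 2x$, so $v \in \mathcal{A}_{\alpha L}$. Computing the energy: $v_{,x}^2 = \alpha^2 u_{,x}^2$ (evaluated at $y/\alpha$) and $v_{,yy} = \alpha^{-1} u_{,yy}$, so $v_{,yy}^2 = \alpha^{-2}u_{,yy}^2$; after the substitution in the $y$-average we get $\S[\alpha L](v) = \int_0^1 \fint_{-L}^L (\alpha^2 u_{,x}^2 + \alpha^{-2} u_{,yy}^2)\dxy \le \alpha^2 \S(u)$ since $\alpha \ge 1$ makes $\alpha^{-2}\le 1 \le \alpha^2$. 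Taking the infimum over $u$ yields~\eqref{sigma:growth}.

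Next, for~\eqref{sigma:dec}, given near-optimal $u \in \mathcal{A}_L$, I would define $v$ on $[0,1]\times[-NL,NL)$ by repeating $u$ periodically, i.e. $v(x,y) := u(x, y)$ interpreted via the $2L$-periodic extension of $u$ in $y$. Then $v$ is $2NL$-periodic, $v(0,\cdot)=0$, and both averages are unchanged: $\fint_{-NL}^{NL} v_{,y}^2 \dy = \fint_{-L}^{L} u_{,y}^2 \dy = 2x$ and $\S[NL](v) = \S(u)$ because averaging a periodic function over $N$ periods equals averaging over one period. Hence $\sigma_{NL} \le \S(u)$, and taking the infimum gives~\eqref{sigma:dec}. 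Then~\eqref{sigmale} follows: for any $L \ge 1$, pick $N = \lceil L \rceil \in \mathbb{N}$ so that $L/N \le 1$, and write $L = N \cdot (L/N) = N L'$ with $L' = L/N \le 1$; but to use~\eqref{sigma:dec} I need the inner factor to be $\le 1$ and then relate $\sigma_{L'}$ to $\sigma_1$ via~\eqref{sigma:growth} applied with $\alpha = 1/L' \ge 1$: $\sigma_1 = \sigma_{(1/L')L'} \le (1/L')^2 \sigma_{L'}$, which is the wrong direction. Instead I would go: $\sigma_L = \sigma_{N L'} \le \sigma_{L'}$ by~\eqref{sigma:dec} (valid since $N \in \mathbb{N}$, regardless of $L'$), and then bound $\sigma_{L'} \le \sigma_{2L'} \cdot$? — cleaner is to note $1 = L' \cdot (1/L')$ with $1/L' \ge 1$, so by~\eqref{sigma:growth} with base length $L'$ and factor $\alpha = 1/L'$ we get $\sigma_1 \le (1/L')^2 \sigma_{L'}$, hence $\sigma_{L'} \ge (L')^2 \sigma_1$ — still the wrong direction. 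The correct route: apply~\eqref{sigma:growth} with base $1$ and $\alpha = L' \le 1$ is not allowed. So instead choose $N$ so that $L' := L/N \in [1/2, 1]$ (possible since $L \ge 1$: take $N = \lceil L \rceil$ if that lands in range, otherwise $N=1$ for $L \in [1,2)$ — in fact $N := \max(1,\lfloor L \rfloor)$ gives $L' \in [1,2)$ when $L \ge 1$), apply~\eqref{sigma:dec} to get $\sigma_L \le \sigma_{L'}$, and finally~\eqref{sigma:growth} with base $1$, $\alpha = L' \in [1,2)$: $\sigma_{L'} \le (L')^2 \sigma_1 \le 4\sigma_1$. This gives~\eqref{sigmale}.

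Finally, for~\eqref{def:sigma} I would first note $\inf_{L>0}\sigma_L = \inf_{L\ge 1}\sigma_L$: indeed for $L < 1$, applying~\eqref{sigma:growth} with base $L$ and factor $\alpha = 1/L > 1$ gives $\sigma_1 \le (1/L)^2 \sigma_L$, i.e. $\sigma_L \ge L^2 \sigma_1$, so small $L$ cannot beat the infimum over $L \ge 1$ (and in any case $\sigma = \inf_{L>0}\sigma_L$ by definition, so it suffices to show the restricted infimum equals the limit). For the limit: monotonicity along the subsequence $L, 2L, 3L, \dots$ is not immediate, but~\eqref{sigma:dec} shows $\sigma_{NL} \le \sigma_L$ for all $N$, so for fixed $L$ the value $\sigma_{NL}$ decreases (weakly) in $N$; combined with~\eqref{sigma:growth} giving continuity-type control $\sigma_{L'} \le (L'/L)^2 \sigma_L$ for $L' \ge L$, one shows $\limsup_{L'\to\infty}\sigma_{L'} \le \sigma_L$ for every $L$ by writing any large $L'$ as $L' = N L + r$ with $0 \le r < L$ and $\sigma_{L'} = \sigma_{(NL)\cdot(L'/NL)} \le (L'/NL)^2 \sigma_{NL} \le (L'/NL)^2 \sigma_L \to \sigma_L$. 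Hence $\limsup_{L'\to\infty}\sigma_{L'} \le \inf_{L\ge 1}\sigma_L \le \liminf_{L'\to\infty}\sigma_{L'}$, so the limit exists and equals the infimum, which is $\sigma$. The main obstacle is purely bookkeeping: getting the direction of the inequalities right when combining the dilation bound~\eqref{sigma:growth} (which only goes one way because of the $\alpha \ge 1$ restriction) with the tiling bound~\eqref{sigma:dec}; once the decomposition $L' = NL + r$ is used, everything is routine.
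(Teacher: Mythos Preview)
Your approach is essentially identical to the paper's: the same dilation $v(x,y)=\alpha u(x,y/\alpha)$ for~\eqref{sigma:growth}, the same periodic tiling for~\eqref{sigma:dec}, the same decomposition $L' = (NL)\cdot \alpha$ with $\alpha = L'/(NL) \to 1$ to get $\limsup_{L'\to\infty}\sigma_{L'}\le\sigma_L$ for~\eqref{def:sigma}, and the same combination of the two inequalities for~\eqref{sigmale} (the paper just says ``immediately follows'').

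There is one genuine slip. In arguing the first equality of~\eqref{def:sigma}, you derive $\sigma_L \ge L^2\sigma_1$ for $L<1$ and conclude ``small $L$ cannot beat the infimum over $L\ge 1$.'' That inequality goes the wrong way: $L^2\sigma_1\to 0$ as $L\to 0$, so it gives no lower bound tied to $\inf_{L\ge 1}\sigma_L$. The fix (which is what the paper does) is to use~\eqref{sigma:dec} rather than~\eqref{sigma:growth}: for $L\in(0,1)$ choose $N\in\mathbb{N}$ with $NL\ge 1$; then $\sigma_{NL}\le\sigma_L$ gives $\inf_{L'\ge 1}\sigma_{L'}\le\sigma_L$, hence $\inf_{L>0}\sigma_L=\inf_{L\ge 1}\sigma_L$.
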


\begin{proof}
 Given $\epsilon > 0$, let $u \in \A$ be such that  $\S(u) \le \sigma_L + \epsilon$. We define $v(x,y) := \alpha u(x,\alpha^{-1}y)$ and observe that $v \in \mathcal{A}_{\alpha L}$. Then
 \begin{equation}\nonumber
  \sigma_{\alpha L} \le \mathcal{S}_{\alpha L}(v) = \int_0^1 \fint_{-\alpha L}^{\alpha L} v_{,x}^2 + v_{,yy}^2 \dxy = \int_0^1 \lint \alpha^2 u_{,x}^2 + \alpha^{-2} u_{,yy}^2 \dxy \le \alpha^{2} \S(u) \le \alpha^{2} (\sigma_L + \epsilon).
 \end{equation}
 Since $\epsilon>0$ was arbitrary,~\eqref{sigma:growth} follows. To prove~\eqref{sigma:dec}, it is enough to observe that for $u \in \A$, one can use periodicity of $u$ in the $y$-variable to define periodic extension $\bar u$ of $u$ in $[0,1] \times [-NL,NL]$ such that  $\S(u) = \mathcal{S}_{NL}(\bar u)$. 

 Relation~\eqref{sigmale} immediately follows from~\eqref{sigma:growth} and~\eqref{sigma:dec}, and so it remains to prove~\eqref{def:sigma}. Using~\eqref{sigma:dec} we see that $\inf_{L > 0} \sigma_L = \inf_{L \ge 1} \sigma_L =  \liminf_{L \ge 1} \sigma_L$, in particular the first equality in~\eqref{def:sigma} follows. Hence, to prove the second equality in~\eqref{def:sigma} it is enough to show that $\lim_{L \to \infty} \sigma_L = \liminf_{L \to \infty} \sigma_L$. 

 Given $\epsilon > 0$ and a (large) integer $N_0$, let $L_0 \ge 1$ be such that $\sigma_{L_0} \le \sigma + \epsilon$. For any $L \ge N_0L_0$ we define $N := \lfloor L/L_0 \rfloor$ and $\alpha := L/(NL_0)$. Since $NL_0 \le L < (N+1)L_0$, we have $1 \le \alpha < 1+1/N \le 1+N_0^{-1}$. Then 
%we observe that for any large enough $L$ (we only require $L \ge N_0L_0$) we have
 \begin{equation}\nonumber
  \sigma_L = \sigma_{\alpha NL_0} \overset{\eqref{sigma:growth}}{\le} \alpha^2 \sigma_{NL_0} \overset{\eqref{sigma:dec}}{\le} \alpha^2 \sigma_{L_0} \le (1+N_0^{-1})^2(\sigma+\epsilon).
 \end{equation}
 In particular, we see that for all $L \ge N_0L_0$ we have $\sigma_L \le (1+N_0^{-1})^2(\sigma+\epsilon)$, which concludes the proof since $\epsilon > 0$ and integer $N_0$ can be chosen arbitrarily.
\end{proof}

% As a consequence of the lemma we obtain

% \begin{equation}
%  \mathcal{E}_0 = \min J_1,\quad w_{1,x} \approx 1, J_2 \approx 0. 
% \end{equation}
% Moreover, we will also see that
% \begin{equation}
%  I_2 \approx 0, |K| \textrm{ is bounded}, 
% \end{equation}
% and so 
% \begin{equation}
%  \min E_h(\overline w,\overline u_3) \approx \mathcal{E}_0 + h \min_{w,u_3} \int_{0}^1 \fint_{-h^{-1/2}}^{h^{-1/2}} u_{3,x}^2/2 + u_{3,yy}^2 \dxy
% \end{equation}
% subject to $w_{2,y}(x,y) + u_{3,y}^2(x,y)/2 = x$. Since $w$ is periodic in $y$ and the only place it shows up is the constrain, the constrain is equivalent to 
% \begin{equation}
%  \fint_{-h^{-1/2}}^{h^{-1/2}} u_{3,y}^2(x,y)/2 \dy = x.
% \end{equation}

\section{Upper bound}\label{sect:ub}

To prove the upper bound
\begin{equation}\nonumber
 \limsup_{h\to 0}  \frac{\min_{(W,U_3)} E_h(W,U_3) - \mathcal{E}_0}{h} \le \sigma
\end{equation}
we use a global minimizer (ground state) $u$ of $\S$ (existence of which is granted by Theorem~\ref{thm:u}) to define deformation $(w_1,w_2,u_3)$. To estimate the energy $\E_L(w,u_3)$ (see~\eqref{RLenergy}), we will need regularity estimates of a ground state $u$ of $\S$, which are the content of Theorem~\ref{thm:u}. 

Let $\epsilon > 0$ be fixed. To prove the upper bound it is enough to show that there exists $h_\epsilon > 0$ such that for every $0 < h \le h_\epsilon$ we have
\begin{equation}\nonumber
 \frac{1}{h} \left( \min_{(W,U_3)} E_h(W,U_3) - \mathcal{E}_0\right) \le \sigma + \epsilon.
\end{equation}
Since by~\eqref{def:sigma} $\lim_{L\to \infty} \sigma_L = \sigma$, we fix $L_\epsilon \ge 1$ such that for every $L \ge L_\epsilon$ we have $\sigma_L \le \sigma + \epsilon/2$. 
We define  $h_\epsilon := (N_\epsilon L_\epsilon)^{-2}$, where $N_\epsilon$ is a large integer to be chosen later. Then for $h \in (0,h_\epsilon)$ we can find $L_0 \in [L_\epsilon,2L_\epsilon]$ and an integer $N \ge N_\epsilon$ such that 
\begin{equation}\nonumber
 h^{-1/2} =: L = NL_0.
\end{equation}
 Further let $u \in \mathcal{A}_{L_0}$ be a ground state of $\mathcal{S}_{L_0}$. Then
\begin{equation}\label{sigmae}
 \int_0^1 \fint_{-L_0}^{L_0} u_{,x}^2 + u_{,yy}^2 \dx \dy = \sigma_{L_0} \le \sigma + \epsilon/2
\end{equation}
and $u$ satisfies~(\ref{reg1}-\ref{reg6}). 

For the rest of Section~\ref{sect:ub}, $h$, $N$, $L_0$, and $u$ will stay fixed. In the next part (Section~\ref{ss:defwu}) we use $u$ to define $w_1,w_2,u_3$. We then show periodicity in $y$ of $w_1,w_2,u_3$, so that we can use them as (proper) test deformation for $\E_L$. In Section~\ref{ss:estwu}, we conclude the proof of the upper bound by estimating all the terms in $\E_L(w,u_3)$. 

%Throughout this section let $\epsilon > 0$, $L_0 \ge 1$, and a corresponding ground state $u \in \mathcal{A}_{L_0}$ of $\mathcal{S}_{L_0}$ be fixed. Also, let $N \in \mathbb{N}$ and $L := NL_0$. 
% From before we know that 
% \begin{align}\label{Lenergy}
%  E_h(W,U_3) &= \E_L(w,u_3) =
%  \\ & \int_{-1}^0 \fint_{-L}^{L} |w_{2,y} + u_{3,y}^2/2 - x|^2 \dxy 
%  + \int_{0}^1 \fint_{-L}^{L} |w_{2,y} + u_{3,y}^2/2 - x|^2 \dxy \label{Lenergy2}
%  \\ &+ L^{-4} \int_{-1}^1 \fint_{-L}^{L} 2u_{3,xy}^2 \dxy + L^{-6} \int_{-1}^1 \lint u_{3,xx}^2 \dxy \label{Lenergy4}
%  \\ &+\int_{-1}^1 \fint_{-L}^{L} \left(\left[w_{1,x} - 1\right] + \left[L^{-2}u_{3,x}^2/2\right]\right)^2 \dxy + \int_{-1}^1 \lint -1 \dxy \label{Lenergy1} 
%  \\ &+ L^{-2} \int_{-1}^1 \fint_{-L}^{L} u_{3,x}^2 + u_{3,yy}^2 \dxy + L^{-2} \int_{-1}^{1} \lint \frac{1}{2}\left|L^2 w_{1,y} + w_{2,x} + u_{3,x} u_{3,y}\right|^2 \dxy, \label{Lenergy3}
%  \end{align}
% where $(W,U_3)$ is defined in $[-1,1]\times[-1,1]$, $(w,u_3)$ is defined in $[-1,1]\times[-L,L]$, and $L = h^{-1/2}$. 

\subsection{Definition and periodicity of $(w,u_3)$}\label{ss:defwu}

%We will now define $(w,u_3)$. 
Let $\varphi : (-\infty,\infty) \to [0,1]$, $\varphi|_{(-\infty,1/2]} = 0, \varphi|_{[1,\infty)} = 1$ be a smooth cutoff function. Then for given $0 < \delta < 1$ we define a rescaled cutoff function
\begin{equation}\nonumber
 \varphi_\delta(x) := \varphi(x/\delta), \quad x \in [-1,1].
\end{equation}
Using $\varphi_\delta$ we define for $(x,y) \in [-1,1]\times[-L,L]$ 
\begin{equation}
\begin{aligned}\label{defwu}
  u_3(x,y) &:= \varphi_\delta(x) u(x,y), \\
  w_2(x,y) &:= \varphi_\delta^2(x)xy - \int_0^y u_{3,y}^2(x,y')/2 \ud y' = \varphi_\delta^2(x) \left( xy - \int_0^y u_{,y}^2(x,y')/2 \ud y'\right),\\
  w_1(x,y) &:= x - L^{-2} \int_0^y w_{2,x}(x,y') + u_{3,x}(x,y')u_{3,y}(x,y') \ud y',
\end{aligned}
\end{equation}
where $u$ is a ground state of $\mathcal{S}_{L_0}$ (which is odd in the $y$-variable), extended periodically to $[0,1] \times [-L,L]$ (originally it was defined only in $[0,1]\times [-L_0,L_0]$), and by $0$ to $[-1,0]\times[-L,L]$. 

We observe that for $-1 \le x \le \delta/2$ we have $u_3(x,y) = 0$, hence $w_2(x,y)=0$ and $w_1(x,y)=x$. Before we estimate all the terms in the energy $\E_L(w,u_3)$ we want to verify that $u_3,w_1,w_2$ are $2L$-periodic functions in the $y$-variable. In fact, we will show that they are $y$-periodic with period $2L_0$. 

Indeed, since $u$ is $2L_0$-periodic in $y$, so is $u_3$. To show that $w_1$ and $w_2$ are periodic we use the following simple fact: a differentiable function $f$ is $\lambda$-periodic if $f'$ is $\lambda$-periodic and $f(t) = f(t+\lambda)$ for some $t$. We compute
\begin{equation}\nonumber
 w_2(x,L_0) - w_2(x,-L_0) = \varphi_\delta^2(x) \left( 2L_0x - \frac{1}{2}\int_{-L_0}^{L_0} u_{,y}^2(x,y) \dy  \right) = 0,
\end{equation}
where we used that $\fint_{-L_0}^{L_0} u_{,y}^2(x,y) \dy = 2x$. Moreover, $w_{2,y}(x,y) = \varphi_\delta^2(x)(x -  u_{,y}^2(x,y)/2)$ is periodic in $y$, which implies periodicity of $w_2$. It remains to show periodicity of $w_1$. %Similarly as for $w_2$ 
We compute
\begin{equation}\nonumber
 w_1(x,L_0) - w_1(x,-L_0) = -L^{-2} \int_{-L_0}^{L_0} w_{2,x} + u_{3,x}u_{3,y} \dy.
\end{equation}
Since $u$ is an odd function of $y$, so is $u_3$, and the second integrand $u_{3,x}u_{3,y}$ is an odd function of $y$ as well. Moreover, $u_{3,y}^2$ is even function of $y$, hence the definition of $w_2$ implies that $w_2$ is also an odd function of $y$. Since both integrands in the above relation are odd functions of $y$ and, since we integrate over a symmetric interval around $0$, the integral vanishes. Finally, we compute $w_{1,y} = L^{-2}(w_{2,x} + u_{3,x}u_{3,y})$ and use that both $w_{2,x}$ and $u_{3,x}u_{3,y}$ are periodic in $y$ (the first is a consequence of the periodicity of $w_2$ in $y$), which concludes the proof of periodicity of $w_1$. 

\subsection{Estimating $\E_L(w,u_3)$}\label{ss:estwu}

We will now estimate all the terms in 
\begin{align}
 \E_{L}(w,u_3) &= \int_{-1}^1 \lint \left( w_{1,x} + L^{-2} u_{3,x}^2/2 - 1 \right)^2 \dxy - 2 + \int_{-1}^0 \lint |w_{2,y} + u_{3,y}^2/2 - x|^2 \dxy \label{deriv15a}
\\ &\quad+ \int_{0}^1 \lint |w_{2,y} + u_{3,y}^2/2 - x|^2 \dxy \label{deriv16a}
\\ &\quad+ L^{-2} \int_{-1}^1 \lint \left|L^2 w_{1,y} + w_{2,x} + u_{3,x} u_{3,y}\right|^2/2 \dxy \label{deriv17a}
\\ &\quad + L^{-2} \int_{-1}^1 \lint u_{3,x}^2 + u_{3,yy}^2 \dxy \label{deriv18a}
\\ &\quad + L^{-4} \int_{-1}^1 \lint 2u_{3,xy}^2 + L^{-2} u_{3,xx}^2 \dxy. \label{deriv19a}
\end{align}

% In the following few steps we will estimate all the terms from the the energy~\eqref{Lenergy}.
\begin{itemize}
\item
We start with the first integral on~\eqref{deriv15a}. Using $(a+b)^2 \le 2a^2 + 2b^2$ we get that 
%We use following two lemmas to estimate the first integral on~\eqref{deriv15}
\begin{multline}\label{30}
 \int_{-1}^1 \lint \left( w_{1,x} + L^{-2}u_{3,x}^2/2 - 1\right)^2 \dxy \\ \le 2 \int_{-1}^1 \lint \left( w_{1,x} - 1\right)^2 \dxy + 2 \int_{-1}^1 \lint  L^{-4}u_{3,x}^4/4  \dxy.
\end{multline}
The estimate for the right-hand side follows from the two following lemmas (Lemma~\ref{lm31} and Lemma~\ref{lm32}):

\begin{lemma}\label{lm31}
 The first integral on the right-hand side of~\eqref{30} satisfies
 \begin{equation}
   \int_{-1}^1 \lint \left( w_{1,x} - 1 \right)^2 \dxy \lesssim \frac{L_0^4}{L^4} \delta^{-1} (|\ln \delta|^8 + 1).\label{24.1a}
 \end{equation}
\end{lemma}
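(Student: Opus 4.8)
The plan is to estimate $w_{1,x} - 1$ directly from the definition of $w_1$ in \eqref{defwu}. Differentiating $w_1(x,y) = x - L^{-2}\int_0^y \left( w_{2,x}(x,y') + u_{3,x}(x,y')u_{3,y}(x,y') \right)\ud y'$ with respect to $x$ gives
\begin{equation}\nonumber
 w_{1,x} - 1 = -L^{-2} \int_0^y \left( w_{2,xx}(x,y') + \partial_x\!\left(u_{3,x} u_{3,y}\right)(x,y') \right) \ud y'.
\end{equation}
So the task reduces to an $L^\infty_y$-type bound on this inner integral, which after squaring, integrating in $y$ over $[-L,L]$ (contributing a factor $\lesssim L^2$ since the integrand is bounded by its $L^2_y$ norm times $\sqrt{L}$, or more carefully by Cauchy--Schwarz $\left|\int_0^y f\right|^2 \le 2L \int_{-L}^L f^2$) and in $x$ over $[-1,1]$, and using the $L^{-4}$ prefactor, should produce the claimed $L_0^4 L^{-4} \delta^{-1}(|\ln\delta|^8+1)$.

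The key steps, in order: first, expand $w_{2,xx}$ using $w_2 = \varphi_\delta^2(x)\bigl(xy - \int_0^y u_{,y}^2/2\bigr)$; by the Leibniz rule this generates terms with up to two derivatives landing on $\varphi_\delta$ (each $x$-derivative of $\varphi_\delta$ costs a factor $\delta^{-1}$ and is supported on $x \sim \delta$) and terms with $x$-derivatives on $\int_0^y u_{,y}^2/2$, i.e. involving $\int_0^y u_{,xy} u_{,y}$ and $\int_0^y u_{,xxy}u_{,y} + u_{,xy}^2$. Similarly $\partial_x(u_{3,x}u_{3,y})$ with $u_3 = \varphi_\delta u$ expands into $\varphi_\delta$-derivative terms plus $\varphi_\delta^2(u_{,xx}u_{,y} + u_{,xy}u_{,x})$-type terms. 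Second, for each resulting term I would apply Cauchy--Schwarz in $y'$ to pull the $\int_0^y$ inside as $2L\int_{-L}^L(\cdot)^2$, then integrate in $y$ (another factor $\lesssim L$, but note everything is really $2L_0$-periodic so the genuine scale is $L_0$ not $L$ — this is where the $L_0^4$ rather than $L^4$ must come from: the periodic extension means $\fint_{-L}^L = \fint_{-L_0}^{L_0}$ for any $2L_0$-periodic integrand, so the $y$-integrals are controlled on the $L_0$-cell and the outer $L$ in $\fint_{-L}^L$ just normalizes). Third, invoke the regularity estimates \eqref{reg2}--\eqref{reg6} from Theorem~\ref{thm:u} to bound the $x$-integrals $\int_0^1 \fint_{-L_0}^{L_0}(\text{stuff})$; since $u_3 = \varphi_\delta u$ is supported in $x \ge \delta/2$, all the $\int_0^1$ integrals are really $\int_{\delta/2}^1$, so the negative powers of $x$ in \eqref{reg3}, \eqref{reg4}, \eqref{reg6} (namely $x^{-2}$ and $x^{-1}$) are integrated from $\delta/2$, producing the $\delta^{-1}$ and the logarithmic factors; the $\varphi_\delta$-derivative terms, supported on $\delta/2 \le x \le \delta$ with size $\delta^{-1}$ or $\delta^{-2}$ there, also yield powers of $\delta^{-1}$ after integrating over an interval of length $\sim\delta$ — one should check these do not beat $\delta^{-1}$, which is plausible after the favourable $x$-weights from the $u$-estimates. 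Collecting the worst term (the one forcing $|\ln\delta|^8$, presumably coming from $\int_0^y u_{,xxy}u_{,y}$ combined with $x^{-2}(|\ln x|^7+1)$ from \eqref{reg3} via a further Cauchy--Schwarz producing $|\ln x|^7 \cdot |\ln x|$) gives the stated bound.

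The main obstacle I expect is the bookkeeping: carefully tracking which of the many Leibniz-expansion terms is the dominant one, and making sure each negative power of $x$ is integrable against $dx$ on $[\delta/2,1]$ after it is paired (via Cauchy--Schwarz in $y'$, which may need to be done weighted or term-by-term) with the right regularity estimate, so that the final exponent of $\delta^{-1}$ is exactly $1$ and the power of $|\ln\delta|$ is exactly $8$ and not larger. A secondary subtlety is being careful that the periodic extension from $[-L_0,L_0]$ to $[-L,L]$ genuinely gives the $L_0$-scale in all $y$-integrals (so that one never loses a power of $N = L/L_0$), which is the reason the right-hand side of \eqref{24.1a} has $L_0^4/L^4$ and hence tends to $0$ as $h\to 0$ with $L_0$ fixed. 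I would handle the $L^\infty_y$ bounds by always going through $L^2_{y'}$ on the periodicity cell rather than attempting pointwise control, since only $L^2$-type information on $u$ and its derivatives is available from Theorem~\ref{thm:u}.
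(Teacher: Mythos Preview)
Your outline has a real gap. When you expand $w_{2,xx}$ directly you produce the term
\[
  -\int_0^{y'} \bigl( u_{3,xxy}\,u_{3,y} + u_{3,xy}^2\bigr)\,\ud y'',
\]
and your plan is to ``apply Cauchy--Schwarz in $y'$'' to it. But Cauchy--Schwarz here would require a bound on $\fint_{-L_0}^{L_0} u_{,xxy}^2\,\ud y$, and Theorem~\ref{thm:u} gives \emph{no} such estimate: the available third-derivative bound is \eqref{reg6} for $u_{,xyy}$, not for $u_{,xxy}$. Your own remark that the $|\ln\delta|^8$ ``presumably'' comes from pairing $u_{,xxy}u_{,y}$ with \eqref{reg3} betrays the problem --- \eqref{reg3} is about $u_{,xx}$, not $u_{,xxy}$.

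The paper handles this by integrating by parts in $y'$ \emph{before} taking the second $x$-derivative: one rewrites
\[
  w_{2,x}(x,y) \;=\; (\varphi_\delta^2 x)_{,x}\,y \;-\;\int_0^y u_{3,xy}u_{3,y}\,\ud y'
  \;=\; (\varphi_\delta^2 x)_{,x}\,y \;+\;\int_0^y u_{3,x}u_{3,yy}\,\ud y' \;-\; u_{3,x}u_{3,y},
\]
using that the boundary term at $y'=0$ vanishes by oddness of $u$. The point is that the $-u_{3,x}u_{3,y}$ coming out of the boundary exactly cancels the $+u_{3,x}u_{3,y}$ in the combination $w_{2,x}+u_{3,x}u_{3,y}$ appearing in $w_1$. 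Only after this cancellation does one differentiate in $x$, obtaining
\[
  w_{1,x}-1 \;=\; -L^{-2}\int_0^y\Bigl[(\varphi_\delta^2 x)_{,xx}\,y' + \int_0^{y'}\bigl(u_{3,xx}u_{3,yy}+u_{3,x}u_{3,xyy}\bigr)\ud y''\Bigr]\ud y',
\]
in which every factor is controlled by \eqref{reg2}, \eqref{reg3}, \eqref{reg5}, \eqref{reg6}. Restricting to $|y|\le L_0$ by periodicity and applying Cauchy--Schwarz on the $L_0$-cell then gives the pointwise bound $(w_{1,x}-1)^2 \lesssim (L_0/L)^4 x^{-2}(|\ln x|^8+1)$, and integrating in $x$ over $[\delta/2,1]$ yields \eqref{24.1a}. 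Without this integration-by-parts-and-cancellation step your argument does not close.
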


\begin{proof}
Definition of $w_2$ (see \eqref{defwu}) and integration by parts imply
\begin{multline}\label{eq:mizne}
 w_{2,x}(x,y) = \left(\varphi_\delta^2(x)x\right)_{,x} y - \int_0^y u_{3,xy}(x,y') u_{3,y}(x,y') \ud y' \\ = \left(\varphi_\delta^2(x)x\right)_{,x} y + \left(\int_0^y u_{3,x}(x,y') u_{3,yy}(x,y') \ud y'\right) - u_{3,x}(x,y)u_{3,y}(x,y) + u_{3,x}(x,0)u_{3,y}(x,0).
\end{multline}
Since $u$ is an odd function of $y$ (in particular, $u_3(x,0) = u_{3,x}(x,0) = 0$ for any $x \in [-1,1]$), the last term in~\eqref{eq:mizne} vanishes, thus
\begin{equation}\nonumber
 w_{2,x}(x,y) + u_{3,x}(x,y)u_{3,y}(x,y) = \left(\varphi_\delta^2(x)x\right)_{,x} y + \left(\int_0^y u_{3,x}(x,y') u_{3,yy}(x,y') \ud y'\right).
\end{equation}
 Using this relation and the definition of $w_1$ we see that 
\begin{equation}\nonumber
 w_1(x,y) = x - L^{-2} \int_0^y \left[ \left(\varphi_\delta^2(x)x\right)_{,x} y' + \left(\int_0^{y'} u_{3,x}(x,y'') u_{3,yy}(x,y'') \ud y''\right) \right] \ud y',
\end{equation}
and
\begin{multline}\nonumber
 w_{1,x}(x,y) -1 = 
\\ - L^{-2} \int_0^y \left[ \left(\varphi_\delta^2(x)x\right)_{,xx} y' + \left(\int_0^{y'} u_{3,xx}(x,y'') u_{3,yy}(x,y'') + u_{3,x}(x,y'') u_{3,xyy}(x,y'')\ud y''\right) \right] \ud y'.
\end{multline}

%Our goal is to estimate the first integral in~\eqref{Lenergy1}. 
%\begin{itemize}

We are now in position to estimate $(w_{1,x}(x,y) - 1)^2$. We observe that since $w_1$ (and in particular $w_{1,x}$) are $2L_0$-periodic in the $y$-variable, it is enough to estimate $(w_{1,x}(x,y) - 1)^2$ for $|y| \le L_0$. For $x \in [\delta,1]$ and $|y'| \le L_0$ we have 
\begin{align*}\nonumber
 &\left| \int_0^{y'} u_{3,xx}(x,y'') u_{3,yy}(x,y'') + u_{3,x}(x,y'') u_{3,xyy}(x,y'') \ud y'' \right| 
 \\ &\qquad \quad = \left| \int_0^{y'} u_{,xx}(x,y'') u_{,yy}(x,y'') + u_{,x}(x,y'') u_{,xyy}(x,y'') \ud y'' \right| 
 %\\ & \qquad \overset{\text{H\"older's}}{\le} |y'|\left( \fint_0^{y'} u_{,xx}^2 \right)^{1/2} \left( \fint_0^{y'} u_{,yy}^2 \right)^{1/2} + |y'|
 %    \left( \fint_0^{y'} u_{,x}^2 \right)^{1/2} \left( \fint_0^{y'} u_{,xyy}^2 \right)^{1/2}
 \\ & \qquad \underset{|y'|\le L_0}{\overset{\text{H\"older's}}{\le}} \left( \int_0^{L_0} u_{,xx}^2 \right)^{1/2} \left( \int_0^{L_0} u_{,yy}^2 \right)^{1/2} + \left( \int_0^{L_0} u_{,x}^2 \right)^{1/2} \left( \int_0^{L_0} u_{,xyy}^2 \right)^{1/2}
 \\ &\ \overset{\eqref{reg3},\eqref{reg5},\eqref{reg2},\eqref{reg6}}{\lesssim} L_0 x^{-1} (|\ln x|^4 + 1).
\end{align*}
If $x \in [\delta/2,\delta]$ and $|y'| \le L_0$, using chain rule to write $u_{3,x}$, $u_{3,xx}$, and $u_{3,xyy}$ in terms of derivatives of $\varphi_\delta$ and $u$, we obtain a similar estimate:
\begin{align*}\nonumber
 &\left| \int_0^{y'} u_{3,xx}(x,y'') u_{3,yy}(x,y'') + u_{3,x}(x,y'') u_{3,xyy}(x,y'') \ud y'' \right| 
  \\ &\le \left| \int_0^{y'} (\varphi_{\delta,xx}u + 2\varphi_{\delta,x}u_{,x} + \varphi_\delta u_{,xx}) \varphi_{\delta} u_{,yy} + (\varphi_{\delta,x}u + \varphi_\delta u_{,x})(\varphi_{\delta,x}u_{,yy} + \varphi_\delta u_{,xyy}) \ud y'' \right| 
  \\ &\lesssim L_0 x^{-1} (|\ln x|^4 + 1),
\end{align*}
where the last inequality follows as above from H\"older's inequality, \eqref{reg1}, \eqref{reg2}, \eqref{reg3}, \eqref{reg5}, and \eqref{reg6}. Hence for $x \in [\delta,1], y \in [-L_0,L_0]$ we get:
\begin{equation}\nonumber
 (w_{1,x}(x,y) - 1)^2 \lesssim L^{-4} \left| \int_0^y L_0 x^{-1}(|\ln x|^4 + 1) \dy \right|^2 \lesssim \left(\frac{L_0}{L}\right)^4 x^{-2} (|\ln x|^8 + 1).
\end{equation}
Similarly, for $x \in [\delta/2,\delta], y \in [-L_0,L_0]$ we get:
\begin{equation}\nonumber
 (w_{1,x}(x,y) - 1)^2 \lesssim L^{-4} \left| \int_0^y L_0\delta^{-1} + L_0 x^{-1}(|\ln x|^4 + 1) \dy \right|^2 \lesssim \left( \frac{L_0}{L}\right)^4 x^{-2}(|\ln x|^8 + 1),
\end{equation}
and~\eqref{24.1a} follows by adding and integrating two previous inequalities and using that $w_{1,x}(x,y) = 1$ for any $x \in [-1,\delta/2)$.
% \begin{equation}\label{Lenergyp5}
%  \int_{-1}^1 \fint_{-L}^{L} \left(w_{1,x} - 1 \right)^2 \dxy \lesssim \left( \frac{L_0}{L}\right)^4 \int_{\delta/2}^{1} x^{-2}(|\ln x|^8 + 1) \dx \lesssim \frac{1}{L^2} \frac{L_0^4}{L^2} 
%  (\delta^{-1}(|\ln \delta|^8 + 1)).
% \end{equation}
\end{proof}

\begin{lemma}\label{lm32}
 The second integral on the right-hand side of~\eqref{30} satisfies
 \begin{equation}
 \int_{-1}^1 \lint L^{-4} u_{3,x}^4 \dxy \lesssim \frac{L_0^2}{L^4} (|\ln \delta|^4 + 1). \label{24.1b}
 \end{equation}
\end{lemma}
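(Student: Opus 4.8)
\emph{Proof strategy.} The plan is to split $u_{3,x}$ according to which piece of the cut-off $\varphi_\delta$ is active, and for each fixed $x$ to convert the quartic $y$-integral into an $L^\infty_y$-times-$L^2_y$ bound, estimating the two factors by the regularity bounds of Theorem~\ref{thm:u} together with the constraint $\lint u_{,y}^2 = 2x$ built into $\mathcal{A}_{L_0}$ (see~\eqref{AL}).

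The elementary tool I would set up first is a one-dimensional interpolation inequality: for a $2L_0$-periodic $f \in H^1$ one has $\fint_{-L_0}^{L_0} f^4 \le \|f\|_{L^\infty}^2 \fint_{-L_0}^{L_0} f^2$, and, writing $f(y)^2 = f(z)^2 + 2\int_z^y f f'$, averaging over $z \in [-L_0,L_0]$ and applying Cauchy--Schwarz, $\|f\|_{L^\infty}^2 \lesssim \fint_{-L_0}^{L_0} f^2 + L_0 \big(\fint_{-L_0}^{L_0} f^2\big)^{1/2}\big(\fint_{-L_0}^{L_0} f_{,y}^2\big)^{1/2}$. After the periodic extension every $y$-function here is $2L_0$-periodic, so $\lint$ coincides with $\fint_{-L_0}^{L_0}$, and these inequalities apply with $f=u(x,\cdot)$ and with $f=u_{,x}(x,\cdot)$ (the latter lies in $H^1_y$ for a.e.\ $x$ by~\eqref{reg4}).

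Then from~\eqref{defwu}, $u_{3,x} = \varphi_{\delta,x} u + \varphi_\delta u_{,x}$, both terms supported in $\{x \ge \delta/2\}$, so the integrand vanishes for $x<\delta/2$ (in particular on $[-1,0]$). Using $(a+b)^4 \lesssim a^4 + b^4$, together with $|\varphi_{\delta,x}| \lesssim \delta^{-1}$ on its support $[\delta/2,\delta]$ and $\varphi_\delta \le 1$, I would reduce to bounding $\delta^{-4}\int_{\delta/2}^\delta \lint u^4 \dx + \int_{\delta/2}^1 \lint u_{,x}^4 \dx$. For the second (``bulk'') term, the interpolation inequality with~\eqref{reg2} for $\lint u_{,x}^2$ and~\eqref{reg4} for $\lint u_{,xy}^2$ gives $\|u_{,x}(x,\cdot)\|_{L^\infty}^2 \lesssim L_0 x^{-1/2}(|\ln x|+1)^{3/2}$, hence $\lint u_{,x}^4 \lesssim L_0 x^{-1/2}(|\ln x|+1)^{5/2}$; since $x^{-1/2}(|\ln x|+1)^{5/2}$ is integrable on $(0,1)$, this term is $\lesssim L_0$ uniformly in $\delta$. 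For the first (``transition'') term, the interpolation inequality with~\eqref{reg1} giving $\lint u^2 \lesssim x^2(|\ln x|+1)$ and the constraint $\lint u_{,y}^2 = 2x$ yield $\|u(x,\cdot)\|_{L^\infty}^2 \lesssim L_0 x^{3/2}(|\ln x|+1)$, hence $\lint u^4 \lesssim L_0 x^{7/2}(|\ln x|+1)^2$ on $[\delta/2,\delta]$; multiplying by $\delta^{-4}$ and integrating over $[\delta/2,\delta]$ gives $\lesssim L_0 \delta^{1/2}(|\ln\delta|+1)^2 \lesssim L_0(|\ln\delta|^4+1)$. Adding the two contributions and restoring the factor $L^{-4}$ produces $\int_{-1}^1 \lint L^{-4} u_{3,x}^4 \dxy \lesssim \tfrac{L_0}{L^4}(|\ln\delta|^4+1) \le \tfrac{L_0^2}{L^4}(|\ln\delta|^4+1)$ since $L_0 \ge 1$, which is~\eqref{24.1b}.

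There is no deep obstacle here; the only genuinely delicate point is the exponent bookkeeping. One must check that the power of $x^{-1}$ produced in the bulk term stays strictly below $1$ so the $x$-integral converges, and that near the transition the negative power $\delta^{-4}$ coming from $\varphi_{\delta,x}^4$ is outweighed by the $O(x^2)$ smallness of $\lint u^2$ from~\eqref{reg1} and the extra $y$-derivative furnished by the constraint; the logarithmic powers are then harmless and comfortably absorbed into $|\ln\delta|^4+1$.
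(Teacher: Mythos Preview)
Your argument is correct. The approach is close in spirit to the paper's but differs in one substantive way: to bound $\|u_{,x}(x,\cdot)\|_{L^\infty_y}$ the paper exploits the oddness of $u$ (so $u_{,x}(x,0)=0$) and writes $u_{,x}(x,y)=\int_0^y u_{,xy}$, which together with~\eqref{reg4} gives $\|u_{,x}\|_{L^\infty}^2 \lesssim L_0^2\,x^{-1}(|\ln x|^2+1)$; multiplying by $\lint u_{3,x}^2 \lesssim |\ln x|+1$ produces an integrand $\sim L_0^2 x^{-1}(|\ln x|^3+1)$ whose $x$-integral over $[\delta/2,1]$ yields exactly the $L_0^2(|\ln\delta|^4+1)$ in the statement. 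Your generic periodic interpolation $\|f\|_{L^\infty}^2 \lesssim \fint f^2 + L_0(\fint f^2)^{1/2}(\fint f_{,y}^2)^{1/2}$ instead combines~\eqref{reg2} and~\eqref{reg4} and gives the sharper $\|u_{,x}\|_{L^\infty}^2 \lesssim L_0\,x^{-1/2}(|\ln x|+1)^{3/2}$, so your bulk integrand $L_0 x^{-1/2}(|\ln x|+1)^{5/2}$ is integrable on $(0,1)$ uniformly in $\delta$. In effect you prove the stronger bound $\int_{-1}^1\lint L^{-4}u_{3,x}^4 \lesssim L_0/L^4$ (no logarithm needed), and then give away a factor $L_0$ and a factor $(|\ln\delta|^4+1)$ to match~\eqref{24.1b}. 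Either route suffices for the upper bound; yours does not rely on the oddness of the minimizer.
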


\begin{proof}
To estimate $u_{3,x}^4$ we first estimate $\| u_{,x}(x,\cdot) \|_{L^{\infty}(-L_0,L_0)}$.  Since $u$ is an odd function of $y$, we know that $u_{,x}(x,0) = 0$. Then by H\"older's inequality
\begin{multline}\nonumber
 |u_{,x}(x,y)| = \left|\int_0^y u_{,xy}(x,y') \ud y'\right| \le |y|^{1/2} \left( \int_0^{y} u_{,xy}^2(x,y') \ud y'\right)^{1/2} \\ \le L_0 \left( \fint_0^{L_0} u_{,xy}^2(x,y') \ud y'\right)^{1/2} \overset{\eqref{reg4}}{\lesssim} L_0 (x^{-1} (|\ln x|^2+1))^{1/2},
\end{multline}
where we used that $u$ is $2L_0$-periodic, and so we could assume that $|y| \le L_0$. Similarly we get 
\begin{equation}\nonumber
 |u(x,y)| = \left|\int_0^y u_{,y}(x,y') \ud y'\right| \le L_0 \left( \fint_0^{L_0} u_{,y}^2(x,y') \ud y' \right)^{1/2} \overset{\eqref{AL}}{\lesssim} L_0 x^{1/2}.
\end{equation}
Since $u_{3,x} = \varphi_{\delta,x}u + \varphi_{\delta} u_{,x}$, we see that for $x \in [\delta/2,1]$:
\begin{equation}\nonumber
 |u_{3,x}(x,y)| \lesssim L_0 x^{-1/2} (|\ln x|^2 + 1)^{1/2},
\end{equation}
and $u_{3,x} = -$ for $x \in [-1,\delta/2)$. 
%We also observe that $\fint u_{3,x}^2(x,y) \dy \lesssim |\ln x|+1$, and so
Then 
\begin{multline}\nonumber
 \int_{-1}^{1} \lint u_{3,x}^4(x,y) \dxy \le \int_{\delta/2}^1 \lint u_{3,x}^2(x,y) ||u_{3,x}(x,\cdot)||_{L^\infty}^2 \dxy 
\\ \lesssim \int_{\delta/2}^1 \lint u_{3,x}^2(x,y) L_0^2 x^{-1} (|\ln x|^2+1) \dx \dy
\\ \overset{\eqref{reg1},\eqref{reg2}}{\lesssim} L_0^2 \int_{\delta/2}^{1} x^{-1} (|\ln x|^3 + 1) \dx \lesssim L_0^2 (|\ln \delta|^4 + 1),
\end{multline}
which proves the lemma.
\end{proof}

\item To evaluate the second integral on~\eqref{deriv15a} we observe that since $w_{2,y} = u_{3,y} = 0$ for $x \in [-1,0]$, we have
\begin{equation}\label{est2}
 \int_{-1}^0 \fint_{-L}^L |w_{2,y} + u_{3,y}^2/2 -x |^2 \dxy = \int_{-1}^0 \fint_{-L}^L x^2 \dxy = 1/3.
\end{equation}

 \item Now we estimate~\eqref{deriv16a}:
\begin{multline}\label{est3}
\int_0^1 \fint_{-L}^L |w_{2,y} + u_{3,y}^2/2 -x |^2 \dxy =  \int_0^1 \fint_{-L}^L |(\varphi_\delta^2(x)x - u_{3,y}^2(x,y)/2) + u_{3,y}^2(x,y)/2 - x|^2 \dxy \\ \overset{\varphi_\delta=1 \textrm{ in } [\delta,1]}{=} \int_{0}^{\delta} |\varphi_{\delta}^2(x)x - x|^2 \dx \overset{0 \le \varphi \le 1}{\le} \int_0^\delta x^2 \dx = \delta^3/3. 
\end{multline}

\item The next term from $\E_L$ which we estimate (denoted by~\eqref{deriv17a}) has the form 
\begin{equation*}%\tag{\ref{deriv17a}}
 L^{-2} \int_{-1}^1 \lint \left|L^2 w_{1,y} + w_{2,x} + u_{3,x} u_{3,y}\right|^2/2 \dxy.
\end{equation*}
We observe that it follows from the definition of $w_1$ (see \eqref{defwu}) that this term 
completely vanishes. 

\item To estimate~\eqref{deriv18a} we combine
\begin{multline}\nonumber
 \int_{-1}^{1} \lint u_{3,x}^2(x,y) \dxy \le \int_{\delta}^1 \lint u_{,x}^2 \dxy + 2\int_{\delta/2}^{\delta} \lint \varphi_{\delta,x}^2 u^2 + u_{,x}^2 \dxy 
\\ \overset{\eqref{reg1},\eqref{reg2}}{\le} \int_{\delta/2}^1 \lint u_{,x}^2 \dxy + C\delta^{-2} \int_{\delta/2}^{\delta} x^2 (|\ln x|+1) \dx + C \int_{\delta/2}^{\delta} |\ln x|+1 \dx 
\\ \le \int_{0}^1 \lint u_{,x}^2 \dxy + C\delta (|\ln \delta| + 1)
\end{multline}
and
\begin{equation}\nonumber
 \int_{-1}^{1} \lint u_{3,yy}^2(x,y) = \int_{-1}^{1} \lint \varphi_\delta^2(x,y) u_{yy}^2(x,y) \le \int_{0}^{1} \lint u_{,yy}^2(x,y) \dxy
\end{equation}
to obtain
\begin{equation}\label{est4}
\begin{aligned}
 \int_{-1}^{1} \lint u_{3,x}^2(x,y) + u_{3,yy}^2(x,y) \dxy &\le \int_{0}^{1} \lint u_{,x}^2(x,y) + u_{,yy}^2(x,y) \dxy + C \delta (|\ln \delta|+1) 
\\ &\overset{\eqref{sigmae}}{\le} \sigma + \epsilon/2 + C\delta(|\ln \delta| + 1). 
\end{aligned}
\end{equation}

 \item Now we want to estimate the first part of~\eqref{deriv19a}. 
% By Lemma~\ref{lm:reg2} we know that for any $x \in (0,1]$ we have
% \begin{equation}\label{reg:xy}
%  \fint_{-L}^{L} |u_{,xy}(x,y)|^2 \dy \lesssim x^{-1} (-\ln x)^2.
% \end{equation}
From~\eqref{defwu} and definition of $\varphi_\delta$ we see that
\begin{multline}\nonumber
 \int_{-1}^{1} \fint_{-L}^L u_{3,xy}^2(x,y) \dxy = \int_{-1}^{1} \fint_{-L}^L \left( \varphi_{\delta,x} u_{,y} + \varphi_\delta u_{,xy}\right)^2 \dxy \\ = \int_{\delta}^1 \fint_{-L}^L u_{,xy}^2 \dxy + \int_{\delta/2}^{\delta} \fint_{-L}^L \left( \varphi_{\delta,x} u_{,y} + \varphi_\delta u_{,xy}\right)^2 \dxy. 
\end{multline}
Since
\begin{equation}\nonumber
 \int_{\delta}^1 \fint_{-L}^L u_{,xy}^2 \dxy \overset{\eqref{reg4}}{\lesssim} \int_{\delta}^1 x^{-1}(|\ln x|^2 + 1) \dx \lesssim |\ln \delta|^3 + 1,
\end{equation} 
and
\begin{align}\nonumber
\int_{\delta/2}^{\delta} \fint_{-L}^L \left( \varphi_{\delta,x} u_{,y} + \varphi_\delta u_{,xy}\right)^2 \dxy &\lesssim \int_{\delta/2}^\delta \fint_{-L}^L \delta^{-2} u_{,y}^2 + u_{,xy}^2 \dxy 
\\ &\overset{\eqref{AL},\eqref{reg4}}{\lesssim} \int_{\delta/2}^\delta \delta^{-2} x + x^{-1} (|\ln x|^2+1) \dx \lesssim |\ln \delta|^3 + 1,\nonumber
\end{align}
 we obtain
\begin{equation}\label{est5}
  \int_{-1}^{1} \fint_{-L}^L u_{3,xy}^2(x,y) \dxy \lesssim |\ln \delta|^3 + 1.
\end{equation}

 \item
%Next we focus on $\int_{-1}^{1} \fint_{-L}^L u_{3,xx}^2(x,y)$. 
It remains to remains to estimate the second part of~\eqref{deriv19a}. We combine
% Using the definition of $u_3$ the domain of integration can be split into two parts: $x \in [\delta/2,\delta]$ and $x \in [\delta,1]$. In the first domain the integral can be estimated as follows
\begin{equation}\nonumber
 \int_{\delta}^{1} \lint u_{3,xx}^2 \dxy = \int_{\delta}^{1} \lint u_{,xx}^2 \dxy \overset{\eqref{reg3}}{\lesssim} \int_\delta^1 x^{-2} (|\ln x|^7+1) \dx \lesssim \delta^{-1} (|\ln \delta|^7+1),
\end{equation} 
and
\begin{multline}\nonumber
 \int_{\delta/2}^{\delta} \lint u_{3,xx}^2 \dxy = \int_{\delta/2}^{\delta} \lint ( \varphi_{\delta,xx} u + 2 \varphi_{\delta,x} u_{,x} + \varphi_{\delta} u_{,xx})^2 \dxy \\
 \lesssim \int_{\delta/2}^{\delta} \lint \delta^{-4} u^2 + \delta^{-2} u_{,x}^2 + u_{,xx}^2 \dxy \overset{\eqref{reg1},\eqref{reg2},\eqref{reg3}}{\lesssim} \delta^{-1} (|\ln \delta|^7+1)
\end{multline}
to show
\begin{equation}\label{est6}
 \int_{-1}^{1} \lint u_{3,xx}^2 \dxy \lesssim \delta^{-1} (|\ln \delta|^7 + 1).
\end{equation}

% By the definition of $w_1$ (see~\eqref{defwu}), we have $L^2 w_{1,y} + w_{2,x} + u_{3,x}u_{3,y} = 0$, and so the second term in~\eqref{Lenergy3} vanishes. 

\end{itemize}

We now put together estimates \eqref{24.1a},\eqref{24.1b}, and (\ref{est2}-\ref{est6}) to get:
\begin{align*}
\E_L(w,u_3) &= \int_{-1}^1 \lint \left( w_{1,x} + L^{-2} u_{3,x}^2/2 - 1 \right)^2 - 1 \dxy + \int_{-1}^0 \lint |w_{2,y} + u_{3,y}^2/2 - x|^2 \dxy 
\\ &\quad+ \int_{0}^1 \lint |w_{2,y} + u_{3,y}^2/2 - x|^2 \dxy 
\\ &\quad+ L^{-2} \int_{-1}^1 \lint \left|L^2 w_{1,y} + w_{2,x} + u_{3,x} u_{3,y}\right|^2/2 \dxy 
\\ &\quad + L^{-2} \int_{-1}^1 \lint u_{3,x}^2 + u_{3,yy}^2 \dxy 
\\ &\quad + L^{-4} \int_{-1}^1 \lint 2u_{3,xy}^2 + L^{-2} u_{3,xx}^2 \dxy 
\\ &\le \int_{-1}^{1} \lint -1 \dx \dy + \overbrace{\frac{1}{3}}^{\eqref{est2}} + \overbrace{L^{-2}(\sigma+\epsilon/2)}^{\eqref{est4}} + \overbrace{\frac{\delta^3}{3}}^{\eqref{est3}} 
\\ + \frac{C}{L^2}\Bigg( \big.&\underbrace{\frac{L_0^4}{L^2}\delta^{-1}(|\ln \delta|^8 + 1)}_{\eqref{24.1a}} + \underbrace{\frac{L_0^2}{L^2} (|\ln \delta|^4 + 1)}_{\eqref{24.1b}} + \underbrace{\delta (|\ln \delta| + 1)}_{\eqref{est4}} + \underbrace{\frac{|\ln \delta|^3+1}{L^2}}_{\eqref{est5}} + \underbrace{\frac{\delta^{-1}(|\ln \delta|^7+1)}{L^4}}_{\eqref{est6}} \Bigg).
\end{align*}
We set $\delta := 1/L$. Then $\delta^3/3 = 1/(3L^3)$ and the whole last line together with $\delta^3/3$ is bounded by $C(L_0^4 + 1)L^{-3}(\ln^8L + 1)$. Therefore 
\begin{equation}\nonumber
 \E_L(w,u_3) \le \mathcal{E}_0 + L^{-2} \left( \sigma + \epsilon/2 + C(L_0^4+1)L^{-1} (\ln ^8L + 1)\right)
\end{equation}
and 
\begin{equation}\nonumber
 \frac{1}{h} \left( \min_{(W,U_3)} E_h(W,U_3) - \mathcal{E}_0\right) = L^2 \left( \min_{(w,u_3)} \E_L(w,u_3) - \mathcal{E}_0 \right) \le \sigma + \epsilon/2 + C(L_\epsilon^4+1)L^{-1} (\ln ^8L + 1),
\end{equation}
where we used that $L_0 \le 2L_\epsilon$. Since the constant $C$ in the previous relation is universal and the function $L \mapsto L^{-1}(\ln^8 L + 1) \to 0$ as $L \to \infty$, we can find  $N_\epsilon$ large enough such that $C(L_\epsilon^4+1)L^{-1}(\ln^8 L + 1) \le \epsilon/2$ for $L \ge N_\epsilon L_\epsilon$. Such a choice of $N_\epsilon$ then implies
\begin{equation}\nonumber
 \frac{1}{h} \left( \min_{(W,U_3)} E_h(W,U_3) - \mathcal{E}_0\right) \le \sigma + \epsilon
\end{equation}
for any $h \in (0,(N_\epsilon L_\epsilon)^{-2})$, hence
\begin{equation}\nonumber
 \limsup_{h\to 0}\frac{1}{h} \left( \min_{(W,U_3)} E_h(W,U_3) - \mathcal{E}_0\right) \le \sigma + \epsilon.
\end{equation}
Since $\epsilon > 0$ was arbitrary, the proof of the upper bound is finished.

\section{Lower bound}\label{sect:lb}

In this section we will show the lower bound:
\begin{equation}\nonumber
 \liminf_{h\to0} \frac{\min_{(W,U_3)} E_h(W,U_3) - \mathcal{E}_0}{h} \ge \sigma.
\end{equation}
Based on the previous heuristic arguments (given in Section~\ref{heuristics}), we expect that only few terms from $\E_L$ (hence also $E_h$) will play role. Indeed, we will prove that it is enough to consider the second term in~\eqref{deriv15}, and terms from~\eqref{deriv17} and \eqref{deriv18}.

Let $\epsilon > 0$ and $0 < h \le 1$ be fixed. Then there exists a deformation $(w,u_3)$ (obtained by rescaling of some $(W,U_3)$) such that 
\begin{equation}\label{lb49}
 R_L(w,u_3) \le \min_{(W,U_3)} E_h(W,U_3) + \epsilon,
\end{equation}
where as before $L = h^{-1/2} \ge 1$. 
Jensen's inequality together with periodicity of $w_2$ in $y$ imply
\begin{equation}\nonumber
\int_0^1 \lint \left| w_{2,y} + u_{3,y}^2/2 - x \right|^2 \dxy \ge \int_0^1 \left( \lint u_{3,y}^2(x,y)/2 \dy  - x \right)^2 \dx, 
\end{equation}
and
\begin{align*}
\int_{-1}^0 \lint \left| w_{2,y} + u_{3,y}^2/2 - x \right|^2 \dxy &\ge \int_{-1}^0 \left( \lint u_{3,y}^2(x,y)/2 \dy  - x \right)^2 \dx 
\\ &\ge \int_{-1}^0 \left[ x^2 + \left( \lint u_{3,y}^2(x,y)/2 \dy \right)^2 \right] \dx,
\end{align*}
where the last inequality follows from $\lint u_{3,y}^2(x,y)/2 \dy \ge 0$ and from $-x \ge 0$ for $x \in [-1,0]$. We drop several non-negative terms from (\ref{deriv15}-\ref{deriv19}) to get
\begin{align}\nonumber
 R_L(w,u_3) &\ge \int_{-1}^1 \lint - 1 \dxy + \int_{-1}^0 x^2 \dx 
 \\ &\quad +  L^{-2} \int_0^1 \lint u_{3,x}^2 + u_{3,yy}^2 \dxy + \int_{-1}^1 \left( \lint u_{3,y}^2(x,y)/2 \dy  - \vt(x) \right)^2 \dx\label{RLge}
\\ &= \mathcal{E}_0 + L^{-2} \S(u_3) + \int_{-1}^1 \left( \lint u_{3,y}^2(x,y)/2 \dy  - \vt(x) \right)^2 \dx,\nonumber
\end{align}
where 
\begin{equation}\label{upsilon}
 \vt(x) := \begin{cases} 0 & \quad x \in [-1,0]\\ x & \quad x \in [0,1]. \end{cases}
\end{equation}
To show the lower bound we will use the following lemma: 
\newsavebox{\lemboxA}\begin{lrbox}{\lemboxA}
\begin{minipage}{\textwidth}\setlength{\parindent}{\parindentsave}
\vspace{\abovedisplayskip}
\begin{lemma}\label{lm:deltal}
 For any $L \ge 1$ there exists $\delta_L > 0$ such that 
\begin{equation}\label{deltal}
 \sigma_L = \min_{u \in \A} \S(u) \le \min_{v} \left( \S(v) + L^2 \int_{-1}^1 \left( \lint v_{,y}^2(x,y)/2 \dy  - \vt(x) \right)^2 \dx\right) + \delta_L,
\end{equation}
where we minimize over all $v : [-1,1]\times[-L,L] \to \R$ which are $2L$-periodic in the $y$-variable, and $\vt(x) = x \chi_{[0,1]}(x)$ (see \eqref{upsilon}). Moreover, 
\begin{equation}\label{deltalto0}
 \delta_L \to 0 \textrm{ as } L \to \infty.
\end{equation}
\end{lemma}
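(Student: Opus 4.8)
The goal is the nontrivial inequality in \eqref{deltal}: writing
$G(v):=\S(v)+L^2\int_{-1}^1\big(\lint v_{,y}^2(x,y)/2\,\dy-\vt(x)\big)^2\dx$
for a $2L$-periodic competitor $v:[-1,1]\times[-L,L]\to\R$, we must produce from every such $v$ an admissible $u\in\A$ with $\S(u)\le G(v)+\delta_L$, where $\delta_L$ depends only on $L$ and $\delta_L\to0$ as $L\to\infty$.

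The plan starts with reductions. If $G(v)\ge\sigma_L$, take $u$ to be a ground state of $\S$ and stop; so we may assume $G(v)<\sigma_L\le4\sigma_1$ (using \eqref{sigmale}). Then $\S(v)\le4\sigma_1$ and, with $g(x):=\lint v_{,y}^2(x,y)/2\,\dy\ge0$, we get $\|g-\vt\|_{L^2(-1,1)}^2\le4\sigma_1/L^2=:\eta_L^2\to0$. Replacing $v(x,\cdot)$ by $v(x,\cdot)-\lint v(x,\cdot)\dy$ (which leaves $g$ and $v_{,yy}$ unchanged and does not increase $\int_0^1\lint v_{,x}^2\dxy$) we may also assume $\lint v(x,y)\dy=0$; then $\lint\big(v(x,\cdot)-v(x',\cdot)\big)^2\dy\le4\sigma_1|x-x'|$ and $\lint v^2(x,y)\dy\le(L/\pi)^2\,2g(x)$ by Poincar\'e.

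The construction of $u$ proceeds on three regions controlled by a scale $\rho_L\to0$ to be tuned at the end: the ``core'' $[\rho_L,1]$, the degenerate region $[0,\rho_L]$ near the kink of $\vt$, and a short gluing window joining them. On the core one enforces $\lint u_{,y}^2(x,y)/2\,\dy=\vt(x)$ exactly by correcting $v$ in two steps: (i) a multiplicative factor $\mu(x):=\min\{1,\sqrt{\vt(x)/g(x)}\}\le1$ that removes the over-wrinkled excess on $\{g>\vt\}$ and leaves a nonnegative deficit $D:=(\vt-g)_+$ with $\int_0^1 D\le\|g-\vt\|_{L^1}\le\eta_L$ small; (ii) an added single-mode wrinkle $w(x,y):=B(x)\sqrt2\sin(\pi Ky/L)$, where $K\sim L$ is chosen by a pigeonhole argument among $K\in\{L,\dots,2L\}$ (using $\int_0^1\lint v_{,yy}^2\dxy\le4\sigma_1$) so that $v$'s $K$-th Fourier mode, hence the cross term $\lint v_{,y}w_{,y}\,\dy$, is $o_L(1)$, and where $B$ solves the resulting scalar equation that makes the constraint exact --- with $D$ first mollified and floored so that $B\propto\sqrt{\bar D}$ is Sobolev in $x$. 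On $[0,\rho_L]$ one discards $v$ entirely and inserts there a piece of an $\S$-ground state from Theorem~\ref{thm:u}, which is admissible, has energy over $[0,\rho_L]$ at most $C\int_0^{\rho_L}(|\ln x|+1)\dx\to0$ uniformly in $L$ by \eqref{reg2} and \eqref{reg5}, and $L^2_y$-size at $x=\rho_L$ at most $\lesssim\rho_L\sqrt{|\ln\rho_L|}$ by \eqref{reg1}. The short window is then filled by a constraint-preserving interpolation (a ``rotation'' of the $y$-profile keeping $\lint u_{,y}^2/2$ at its prescribed, almost constant value), whose cost is small provided the two endpoint profiles have small $L^2_y$-norm; one arranges the latter by choosing the matching abscissae via Chebyshev, using that $\lint v^2(x,y)\dy$ is small in $L^2$ near $x=0$.

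The estimates then are: since $0\le\mu\le1$ we have $\int_{\rho_L}^1\lint(\mu v)_{,yy}^2\dxy\le\int_0^1\lint v_{,yy}^2\dxy$ and $\int_{\rho_L}^1\lint\mu^2 v_{,x}^2\dxy\le\int_0^1\lint v_{,x}^2\dxy$ (with genuine gains where $g>\vt$), while the $\mu'$-terms, the full contribution of $w$, the $[0,\rho_L]$-contribution and the window contribution are all shown to be $o_L(1)$; hence $\S(u)\le\S(v)+o_L(1)\le G(v)+\delta_L$ with $\delta_L\to0$, and since $v$ was arbitrary this gives \eqref{deltal} and \eqref{deltalto0}. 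The main obstacle is making every ``$o_L(1)$'' genuinely uniform in $L$ and simultaneously satisfiable: the deficit-restoring wrinkle is pulled three ways --- its bending energy forces its frequency $\lesssim L$, decoupling it from $v$ forces it large and generic, and controlling $\int\lint w_{,x}^2\dxy$ forces its amplitude $B\propto\sqrt{\bar D}$ to be spatially regular --- and $\rho_L$ must be large enough for $\mu$ (and $\mu'$) to be a small perturbation on $[\rho_L,1]$ yet small enough for the near-origin region and the gluing window to be negligible. Reconciling all of this --- which amounts to extracting from $\S(v)\le4\sigma_1$ just enough regularity of $g$ (beyond $g\in L^1$) to bound $\|(\sqrt{\bar D})'\|_{L^2}$, doing the analysis on a ``good'' subset of $[\rho_L,1]$ and absorbing the small ``bad'' complement, and carrying out the constraint-respecting gluing at the degenerate endpoint $x=0$ --- is the heart of the argument and where the real work lies.
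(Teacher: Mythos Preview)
Your outline diverges substantially from the paper's argument, and the part you yourself flag as ``where the real work lies'' is in fact a genuine gap that your framework makes hard to close.

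The paper's proof never tries to hit the equality $\lint u_{,y}^2/2=x$ directly. Instead it invokes the relaxation Lemma~\ref{lm:relaxed:constrain}: it suffices to build $u$ with $\lint u_{,y}^2(x,y)\,\dy\ge 2x$ and $u(0,\cdot)=0$. This one-sided target means one only ever has to \emph{add} wrinkles, never to damp them, and so the delicate multiplicative correction $\mu(x)=\min\{1,\sqrt{\vt/g}\}$ --- the source of your worst error terms --- never appears. The construction is then carried out entirely in Fourier coefficients: mollify $a_k^2+a_{-k}^2$ in $x$ by a kernel $\varphi_\eta$ of width $\eta$, kill the very low frequencies, linearly cut each surviving coefficient down to $0$ on a short interval near $x=0$, and finally compensate the resulting (small, explicitly bounded) deficit $r(x)=2x-\sum c_k^2k^2$ by combining with a branching construction from Lemma~\ref{lm:construction} and a single bump mode, using the sublinearity Lemma~\ref{lm:sublinear}. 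All errors are powers of $\eta$ and $L^{-1}\eta^{-3/2}$, and the choice $\eta\sim L^{-1/2}$ sends them to zero.

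Your multiplicative step is where the approach stalls. Expanding $\int\lint(\mu v)_{,x}^2$ produces $\int(\mu')^2\lint v^2\,\dy$, and after subtracting the $y$-mean your own Poincar\'e bound gives only $\lint v^2\lesssim L^2 g(x)$; this $L^2$ factor must be beaten by the smallness of $\mu'$, but $\mu'$ involves $g'$, and from $\S(v)\le 4\sigma_1$ one gets at best $g\in W^{1,1}$ (via $g'=-\lint v_{,yy}v_{,x}\,\dy$), not the pointwise or $L^2$ control needed to make $\int(\mu')^2 L^2 g\,\dx$ small uniformly in $L$. Your mollification-and-flooring of the deficit $D$ faces the analogous problem for $B=\sqrt{\bar D}$. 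None of this is obviously unfixable, but you have not fixed it, and the paper's route shows that the cleanest repair is to drop the multiplicative step altogether by relaxing to the inequality constraint. If you want to salvage your plan, the first move should be to prove and use an analogue of Lemma~\ref{lm:relaxed:constrain}, after which the need for $\mu$ disappears and your additive single-mode correction together with the near-origin insertion becomes close in spirit to what the paper does.
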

\end{minipage}\end{lrbox}\newline\noindent\usebox{\lemboxA}

\medskip
We now finish the proof of the lower bound, while we postpone the proof of the lemma to the very end of the paper (see Section~\ref{pf:lm41}).
%This way we can use all the theory developed for the scalar variational problem (Section~\ref{sect:scalar}) to prove Lemma~\ref{lm:deltal}. 
By Lemma~\ref{lm:deltal} we have for every $\epsilon > 0$:
\begin{align*}
 \sigma_L &= \min_{u \in \A} \S(u) \overset{\eqref{deltal}}{\le} \min_{v} \left( \S(v) + L^2 \int_{-1}^1 \left( \lint v_{,y}^2(x,y)/2 \dy  - \vt(x) \right)^2 \dx\right) + \delta_L 
  \\ &\le \S(u_3) + L^2 \int_{-1}^1 \left( \lint u_{3,y}^2(x,y)/2 \dy  - \vt(x) \right)^2 \dx  + \delta_L 
 \\ &\!\!\overset{\eqref{RLge}}{\le} L^2(R_L(w,u_3) - \mathcal{E}_0) + \delta_L \overset{\eqref{lb49}}{\le} \frac{1}{h} \left( \min_{(W,U_3)} E_h(W,U_3) - \mathcal{E}_0 + \epsilon \right) + \delta_L.
\end{align*}
We send $\epsilon \to 0$ (while holding $L^2 = h^{-1}$ fixed) to get
\begin{equation}\nonumber
 \frac{\min_{(W,U_3)} E_h(W,U_3) - \mathcal{E}_0}{h} \ge \sigma_L - \delta_L.
\end{equation}
Finally, \eqref{deltalto0} implies:
\begin{equation}\nonumber
 \liminf_{h \to 0} \frac{\min_{(W,U_3)} E_h(W,U_3) - \mathcal{E}_0}{h} \ge \liminf_{L \to \infty} \left(\sigma_L - \delta_L \right) \overset{\eqref{def:sigma},\eqref{deltalto0}}{=} \sigma.
\end{equation}

\section{Scalar variational problem (proof of Theorem~\ref{thm:u})}\label{sect:scalar}

In this section we prove Theorem~\ref{thm:u}, which amounts to study existence and properties of a global minimizer (ground state) $u$ of 
\begin{equation}\label{def:s}
 \S(u) = \int_0^1 \lint u_{,x}^2 + u_{,yy}^2 \dxy,
\end{equation}
where we minimize over all functions $u : [0,1] \times [-L,L] \to \R$, which are $2L$-periodic in the $y$-variable and satisfy the following constraint:
\begin{equation}\label{def:constrain}
\fint_{-L}^L u_{,y}^2(x,y) \dy = 2x, \qquad \textrm{ for a.e. } x \in [0,1].
\end{equation}
We point out that all the results should hold uniformly in $L \ge 1$. In particular, all the constants will be independent of $L \ge 1$. 

Throughout this section let $L \ge 1$ be fixed. Before we start with the proof of Theorem~\ref{thm:u}, let us outline the strategy of the proof. We first observe that both the energy and the constraint can be written in terms of $L^2$-norms of $u_{,x}$, $u_{,yy}$, $u_{,y}$, and that $u$ is a periodic function of $y$, and so we can write the energy and the constraint using Fourier coefficients of $u$ in the $y$-variable. We also show that we can assume that $u$ is an odd function in the $y$-variable, i.e. that it can be written as $u(x,y) = \sum_{k \in \Ip} a_k(x)\sin(ky)$. Then we show the existence of a minimizer, and some elementary properties of the coefficients $a_k$ (e.g. that for any $k \in \Ip$ either $a_k(x) > 0$ for $x \in (0,1]$ or $a_k \equiv 0$). 

As a next step we derive the Euler-Lagrange equation (with Lagrange multiplier being a measure)
%$a''_k(x) = a_k(x)k^4 - \lambda(x)a_k(x)k^2$ 
together with some preliminary estimates (lower bounds) on the Lagrange multiplier. %$\lambda,\mu$. 
Then we introduce quantities $\mu_k$ defined by $a'_k(x) = \mu_k(x) a_k(x) k^2$. Using $\mu_k$ we replace the linear second-order ODE for $a_k$ by a first-order nonlinear ODE for $\mu_k$. Moreover, we derive a useful comparison principle between $\mu_k$ and $\mu_{k'}$, which will later provide a way to study the behavior of $\mu_k$. In particular, we will show that $\mu_k(x)$ has to stay close to $-1$ for ''most`` $x$ in the interval $(k^{-2},1)$, which in turn implies exponential decay of $a_k$. Finally, using decay of higher frequencies we show the desired regularity estimates on $u$. 

\subsection{Existence of a minimizer $u$ of $\S$}

In this part we first use Fourier series (in $y$) to rewrite $u$ and $\S$. Then we show that for $u' \in \A$ we can find $u \in \A$, an odd function in $y$, while not increasing the energy: $\S(u) \le \S(u')$. We also show the existence of a minimizer $u$ for $\S$ and some properties of $u$. 

The following lemma is a simple consequence of the Plancherel theorem:

\begin{lemma}[Fourier representation]
 Let
\begin{equation}\nonumber
  a_k(x) := \begin{cases} \displaystyle \frac{1}{L} \int_{-L}^L u(x,y) sin(ky) \dy,& \quad k \in \I, k > 0, x\in[0,1],
\\ \displaystyle  \frac{1}{L} \int_{-L}^L u(x,y) cos(ky) \dy,& \quad k \in \I, k \le 0, x \in [0,1].\end{cases}
\end{equation}
Then 
\begin{equation}\nonumber
 \S(u) = \int_0^1 \left( \sum_{k \in \I, k \neq 0}  a'^2_k(x) + \frac{a_0'^2(x)}{4} + \sum_{k \in \I} a_k^2(x)k^4 \right) \dx 
\end{equation}
and the constraint~\eqref{def:constrain} turns into
\begin{equation}\nonumber
 \sum_{k \in \I} a_k^2(x)k^2 = 2x, \qquad \textrm{for a.e. } x \in [0,1].
\end{equation}
\end{lemma}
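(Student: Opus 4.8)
The plan is to prove this as a fibered application of Parseval's identity: I work for each fixed $x\in(0,1)$ on the slice $u(x,\cdot)$ and then integrate in $x$. Since the three quantities entering the energy and the constraint, $u_{,x}(x,\cdot)$, $u_{,yy}(x,\cdot)$ and $u_{,y}(x,\cdot)$, all lie in $L^2(-L,L)$ for a.e.\ $x$ when $\S(u)<\infty$ (and in the general case the identities are read as equalities in $[0,+\infty]$), one may expand $u(x,\cdot)$ in the orthogonal trigonometric system $\{1\}\cup\{\cos(ky),\sin(ky):k\in\Ip\}$ on $[-L,L]$; by construction the coefficients of that expansion are exactly the $a_k(x)$ of the statement. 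The whole proof then reduces to identifying the Fourier coefficients of $u_{,x}$, $u_{,yy}$, $u_{,y}$ in terms of the $a_k$, followed by Parseval and integration in $x$.

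The key steps, in order: \emph{(i)} Show each $a_k\in H^1(0,1)$ with $a_k'(x)$ equal to the $k$-th Fourier coefficient of $u_{,x}(x,\cdot)$. This follows by writing, for $x_0<x$, $a_k(x)-a_k(x_0)=\tfrac1L\int_{-L}^L\bigl(\int_{x_0}^x u_{,x}(t,y)\,\ud t\bigr)\sin(ky)\,\dy$ and exchanging the order of integration (Fubini, legitimate since $u_{,x}\in L^2$), and analogously with $\cos(ky)$ for $k\le0$. \emph{(ii)} Integrate by parts in $y$: by $2L$-periodicity of $u(x,\cdot)$ there are no boundary terms, so the $\sin(ky)$-coefficient of $u_{,yy}(x,\cdot)$ is $-k^2$ times that of $u(x,\cdot)$, and similarly for the $\cos$-modes; one integration by parts likewise turns a $\sin$-mode of $u$ into a $\cos$-mode of $u_{,y}$ with an extra factor $k$, and conversely. \emph{(iii)} Apply Parseval on $[-L,L]$ for each fixed $x$ to get $\lint u_{,x}^2\,\dy$, $\lint u_{,yy}^2\,\dy$, $\lint u_{,y}^2\,\dy$ as the corresponding sums over $k\in\I$, keeping track of the normalization constants $\lint\cos^2(ky)\,\dy=\lint\sin^2(ky)\,\dy$ for $k\neq0$ versus $\lint 1\,\dy$ --- this last discrepancy is precisely what singles out the zero mode and produces the factor in the $a_0'^2$ term. \emph{(iv)} Integrate these fiberwise identities over $x\in(0,1)$; all integrands being nonnegative, Tonelli applies and yields the claimed series expression for $\S(u)$. \emph{(v)} Finally, reading the constraint $\lint u_{,y}^2(x,\cdot)\,\dy=2x$ pointwise in $x$ and using the coefficient identity from (ii) gives $\sum_{k\in\I}a_k^2(x)k^2=2x$ for a.e.\ $x$.

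I do not expect a genuine obstacle: the only mildly delicate point is the legitimacy of differentiating the Fourier coefficients in $x$, i.e.\ that the coefficients of $u_{,x}(x,\cdot)$ are the $x$-derivatives of those of $u(x,\cdot)$, which is exactly what the Fubini computation in step (i) establishes once one knows $u$ is $H^1$ in $x$ with values in $L^2$ in $y$ (a consequence of $\S(u)<\infty$ together with $u\in\A$). Everything else is bookkeeping --- careful use of the two different normalizations in the trigonometric system, and the observation that all the series identities are legitimate in $[0,+\infty]$ so that no convergence needs to be assumed a priori.
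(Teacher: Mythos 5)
Your proposal is correct and is exactly what the paper has in mind: the lemma is stated without proof (the text just calls it ``a simple consequence of the Plancherel theorem''), and your fiberwise Parseval computation, with Fubini to identify the Fourier coefficients of $u_{,x}(x,\cdot)$ as the $x$-derivatives of the $a_k$ and integration by parts for the $y$-derivatives, is the standard way to make that precise. One caveat worth flagging since you explicitly promise to ``keep track of the normalizations'': with $a_k$ defined as in the statement and $\fint_{-L}^{L}$ denoting the average, Parseval gives
\begin{equation*}
\fint_{-L}^{L} u_{,x}^2 + u_{,yy}^2\,\dy = \frac{a_0'^2}{4} + \frac12\sum_{k\neq 0}\bigl(a_k'^2 + a_k^2 k^4\bigr),
\qquad
\fint_{-L}^{L} u_{,y}^2\,\dy = \frac12\sum_{k\in\I} a_k^2 k^2,
\end{equation*}
because $\fint\cos^2(ky)\,\dy=\fint\sin^2(ky)\,\dy=\tfrac12$ for $k\neq 0$ while $\fint 1\,\dy=1$; these differ from the displayed formulas by a uniform factor $2$ on the $k\neq 0$ terms. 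That is a harmless normalization slip in the paper's display, absorbed by replacing $a_k$ with $a_k/\sqrt2$ for $k\neq 0$ (which the paper effectively does once it restricts to odd $u$ and drops the zero mode), and it has no bearing on the subsequent analysis --- but your careful bookkeeping will not reproduce the stated constants verbatim, and you should not be alarmed when it does not.
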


% \begin{proof}
%  The lemma immediately follows from Plancherel Theorem.
% \end{proof}

\begin{remark}\label{rmkae}
 Since for every $k\in \I$ one has $a_k \in W^{1,2}(0,1)$, the quantity $\sum_{k \in \I} a_k^2(x) k^2 \in [0,\infty]$ is defined for every $x \in [0,1]$ (but can possibly be infinite). Using the Fundamental theorem of calculus we get for $x_0,x_1 \in [0,1]$ 
 \begin{equation}\nonumber
  \left| a_k^2(x_1)k^2 - a_k^2(x_0)k^2 \right| \le \int_{x_0}^{x_1} \left|a_k'^2(x)k^2\right| \dx \le \int_{x_0}^{x_1} \left| 2a_k(x)k^2 a'_k(x) \right| \dx \overset{Young's}{\le} \int_{x_0}^{x_1} a_k'^2(x) + a_k^2(x) k^4 \dx,
 \end{equation}
 and so 
 \begin{equation}\nonumber
  \left| \sum_{k \in \I} a_k^2(x_1)k^2 - \sum_{k \in \I} a_k^2(x_0)k^2 \right| \le \int_{x_0}^{x_1} \sum_{k\in\I} a_k'^2(x) + a_k^2(x) k^4 \dx.
 \end{equation}
 Then $\S(u) < \infty$ implies that $\sum_{k\in\I} a_k^2(\cdot)k^2$ belongs to $W^{1,1}(0,1)$, in particular it is a continuous function, and so $\sum_{k\in\I} a_k^2(x)k^2 = 2x$ holds for every (and not only a.e.) $ x \in [0,1]$. 
\end{remark}

 The following lemma, which will be used later several times to combine two different deformations to construct a third one, is based on the fact that the energy $\S$, written in the quadratic variables $a_k^2$, is convex:
%, and the constraint is linear in the quadratic variables $a_k^2$:

\begin{lemma}\label{lm:sublinear}
 Let $\mathcal{I}$ be an (at most) countable subset of $\mathbb{R}$, and let $\{ a_k(x) \}_{k \in \mathcal{I}}$ and $\{ b_k(x) \}_{k \in \mathcal{I}}$ be two families of real functions defined on $[0,1]$. Then the family $\{ c_k \}_{k \in \mathcal{I}}$, defined by
 \begin{equation}\nonumber
  c_k(x) := \sqrt{ a_k^2(x) + b_k^2(x)},
 \end{equation}
 satisfies 
 \begin{equation}\nonumber
  \int_0^1 \sum_{k \in \mathcal{I}} c_k'^2(x) + c_k^2(x) k^4 \dx \le \int_0^1 \sum_{k \in \mathcal{I}} a_k'^2(x) + a_k^2(x) k^4 \dx + \int_0^1 \sum_{k \in \mathcal{I}} b_k'^2(x) + b_k^2(x) k^4 \dx
 \end{equation}
 and 
 \begin{equation}\nonumber
  \sum_{k \in \mathcal{I}} c_k^2(x) k^2 = \sum_{k \in \mathcal{I}} a_k^2(x) k^2 + \sum_{k \in \mathcal{I}} b_k^2(x) k^2.
 \end{equation}
 \end{lemma}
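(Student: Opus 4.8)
The plan is to prove the two claims separately, both being pointwise-in-$x$ (or integrated) statements about the sum over $k \in \mathcal{I}$, so that it suffices to establish the corresponding scalar/summed inequalities for the functions $a_k, b_k, c_k$ at a fixed $x$ (for the algebraic identity) and then integrate (for the energy inequality). First I would dispose of the constraint-type identity: since $c_k^2(x) = a_k^2(x) + b_k^2(x)$ by definition, multiplying by $k^2$ and summing over $k \in \mathcal{I}$ gives $\sum_k c_k^2(x)k^2 = \sum_k a_k^2(x)k^2 + \sum_k b_k^2(x)k^2$ immediately, with no analysis required. The same observation handles the ``potential-energy'' part $\sum_k c_k^2 k^4$ of the first inequality: it is in fact an equality, $\sum_k c_k^2(x)k^4 = \sum_k a_k^2(x)k^4 + \sum_k b_k^2(x)k^4$, again by the defining relation for $c_k$.

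The only genuine content is the bound on the Dirichlet (gradient) term $\int_0^1 \sum_k c_k'^2$. Here I would argue that it is enough to prove the pointwise inequality $c_k'^2(x) \le a_k'^2(x) + b_k'^2(x)$ for each $k$ and a.e.\ $x$, after which summing over $k$ and integrating over $(0,1)$ yields the claim. To get this pointwise bound, differentiate $c_k^2 = a_k^2 + b_k^2$ to obtain $c_k c_k' = a_k a_k' + b_k b_k'$ (valid wherever the functions are differentiable; $c_k = \sqrt{a_k^2+b_k^2}$ is Lipschitz on compact subsets of $\{c_k > 0\}$ and $a_k, b_k \in W^{1,2}$, so $c_k \in W^{1,2}_{\mathrm{loc}}$ on $\{c_k>0\}$). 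At a point where $c_k(x) > 0$, the Cauchy--Schwarz inequality in $\mathbb{R}^2$ gives $|a_k a_k' + b_k b_k'| \le \sqrt{a_k^2+b_k^2}\,\sqrt{a_k'^2+b_k'^2} = c_k\sqrt{a_k'^2+b_k'^2}$, hence $|c_k'| \le \sqrt{a_k'^2+b_k'^2}$, i.e.\ $c_k'^2 \le a_k'^2 + b_k'^2$. At a.e.\ point where $c_k(x) = 0$ one has $a_k(x) = b_k(x) = 0$, and since these are minimum points of $a_k^2$, $b_k^2$, $c_k^2$ respectively, the derivatives $a_k'$, $b_k'$, $c_k'$ all vanish there (a.e.\ on the zero level set, by the standard fact that $f' = 0$ a.e.\ on $\{f = 0\}$ for $f \in W^{1,1}$), so the inequality holds trivially. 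Summing $c_k'^2 \le a_k'^2 + b_k'^2$ over $k \in \mathcal{I}$ and integrating gives $\int_0^1 \sum_k c_k'^2 \le \int_0^1 \sum_k a_k'^2 + \int_0^1 \sum_k b_k'^2$; adding the identity for the $k^4$ terms finishes the first display.

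The main (and really the only) obstacle is the justification of the chain rule / differentiability of $c_k = \sqrt{a_k^2+b_k^2}$ at points where it vanishes — i.e.\ making the ``a.e.'' argument on the zero set rigorous rather than hand-waving it. The clean way around this is to avoid differentiating at zeros altogether: replace $c_k$ by $c_k^\eta := \sqrt{a_k^2 + b_k^2 + \eta^2}$ for $\eta > 0$, which is smooth in $(a_k, b_k)$ and bounded below, so $c_k^{\eta\,\prime} = (a_k a_k' + b_k b_k')/c_k^\eta$ and Cauchy--Schwarz gives $c_k^{\eta\,\prime\,2} \le a_k'^2 + b_k'^2$ with no exceptional set; then let $\eta \to 0$ using $c_k^\eta \to c_k$ uniformly, $c_k^{\eta\,\prime} \rightharpoonup c_k'$ weakly in $L^2$ (the $c_k^\eta$ being bounded in $W^{1,2}$ uniformly in $\eta$), and weak lower semicontinuity of the $L^2$ norm to pass the inequality $\int \sum_k c_k^{\eta\,\prime\,2} \le \int \sum_k a_k'^2 + b_k'^2$ to the limit. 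Either route works; I would present the $\eta$-regularization since it sidesteps the zero-set subtlety entirely. Summing over $k \in \mathcal{I}$ and using monotone convergence to interchange the sum with the integral completes the proof.
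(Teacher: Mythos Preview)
Your proof is correct and follows exactly the same route as the paper: reduce to the pointwise inequality $c_k'^2 \le a_k'^2 + b_k'^2$ via the chain rule and Cauchy--Schwarz (the paper calls it H\"older), with the $k^2$ and $k^4$ identities being immediate from $c_k^2 = a_k^2 + b_k^2$. You are actually more careful than the paper, which simply writes the chain-rule formula $(c_k')^2 = \big((a_k^2+b_k^2)^{-1/2}(a_k a_k' + b_k b_k')\big)^2$ without commenting on the zero set; your $\eta$-regularization cleanly handles that point.
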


\begin{proof}
 Since $c_k^2(x) = a_k^2(x) + b_k^2(x)$, it is enough to show that
 \begin{equation}\nonumber
  \left[ \left( \sqrt{a_k^2(x) + b_k^2(x)} \right)'\right]^2 \le a_k'^2(x) + b_k'^2(x).
 \end{equation}
 By H\"older's inequality $a_k a'_k + b_k b'_k \le (a_k^2 + b_k^2)^{1/2} (a_k'^2 + b_k'^2)^{1/2}$, and so the chain rule implies
 \begin{equation}\nonumber
  \left[ \left( \sqrt{a_k^2 + b_k^2} \right) ' \right]^2 = \left( (a_k^2+b_k^2)^{-1/2} \left( a_ka'_k + b_k b'_k \right) \right)^2 \le a_k'^2 + b_k'^2.
 \end{equation}
\end{proof}

\begin{lemma}[Symmetry of the minimizer]\label{lm:odd}
 Let $u' \in \A$ has finite energy $\S(u') < \infty$. Then there exists $u \in \A$ such that $\S(\bar u) \le \S(u)$ and $u$ is an odd function of $y$.
\end{lemma}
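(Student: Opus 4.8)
The plan is to symmetrize $u'$ in the $y$-variable and show that the symmetrization does not increase the energy, while preserving membership in $\A$. Concretely, I would set $v(x,y) := \tfrac{1}{2}\bigl(u'(x,y) - u'(x,-y)\bigr)$, the odd part of $u'$. Then $v$ is odd in $y$, $v(0,\cdot) = 0$ (since $u'(0,\cdot)=0$), and $v$ is $2L$-periodic in $y$. The issue is that the constraint $\fint_{-L}^L v_{,y}^2 \dy = 2x$ need not hold: in Fourier variables, if $u' = \sum_{k} a_k(x)\sin(ky) + \sum_k b_k(x)\cos(ky)$ (splitting into odd and even parts), then passing to the odd part kills the cosine coefficients $b_k$, so the constraint becomes $\sum_{k>0} a_k^2(x)k^2 = 2x - \sum_k b_k^2(x)k^2 \le 2x$, with a deficit. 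Thus $v$ alone is not in $\A$.

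To fix the deficit I would add back the missing arclength in a cheap single mode. Let $g(x)^2 := 2x - \sum_{k>0} a_k^2(x) k^2 \ge 0$ be the deficit (nonnegative by the original constraint, and in $W^{1,1}$ by Remark~\ref{rmkae}, hence $g$ itself can be taken as a suitable square root). Then I would define, for a fixed frequency $k_0 \in \Ip$ (say $k_0 = \pi/L$, the smallest positive frequency), a correction term and set $u := $ (odd part of $u'$) combined with a mode of the form $k_0^{-1} g(x)\sin(k_0 y)$ via the quadratic-combination device of Lemma~\ref{lm:sublinear}. That is, I apply Lemma~\ref{lm:sublinear} with $\{a_k\}$ the odd-part coefficients of $u'$ and $\{b_k\}$ supported only at $k_0$, equal to $k_0^{-1}g(x)$, to produce $c_k := \sqrt{a_k^2 + b_k^2}$; the resulting function $u := \sum_k c_k(x)\sin(ky)$ is odd in $y$, and by the second conclusion of Lemma~\ref{lm:sublinear} satisfies $\sum_k c_k^2(x)k^2 = \sum_{k>0}a_k^2(x)k^2 + g(x)^2 = 2x$, so $u \in \A$. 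The first conclusion of Lemma~\ref{lm:sublinear} gives $\S(u) \le \S(v) + \int_0^1 \bigl( (k_0^{-1}g)'^2 + g^2 k_0^2 \bigr)\dx$, which is not obviously bounded by $\S(u')$, so I need a better route.

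The cleaner route is to avoid re-adding energy altogether by combining Lemma~\ref{lm:sublinear} with the even/odd split directly: write $u' = \sum_k a_k \sin(ky) + \sum_k b_k\cos(ky)$, and note that the functions $\tilde a_k := \sqrt{a_k^2 + b_k^2}$ satisfy, by Lemma~\ref{lm:sublinear} applied with the two families $\{a_k\}$ and $\{b_k\}$ (indexed over $k>0$, with $b_0$ contributing only to the $k^4$ and $k^2=0$ terms and handled separately, noting $a_0'^2/4$ versus $b_0'^2$ — this is where the factor discrepancy must be checked), the bound $\int_0^1 \sum \tilde a_k'^2 + \tilde a_k^2 k^4 \le \S(u')$ (up to the $a_0$ term, which only helps since $a_0$ does not enter the constraint and can be dropped) and the identity $\sum_k \tilde a_k^2 k^2 = \sum_k a_k^2 k^2 + \sum_k b_k^2 k^2 = 2x$. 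Then $u(x,y) := \sum_{k>0} \tilde a_k(x)\sin(ky)$ is odd in $y$, lies in $\A$, and has $\S(u) \le \S(u')$. The main obstacle is bookkeeping at the zero mode: the energy of $u'$ weights $a_0'^2$ by $1/4$ but $b_0'^2$ by $1$, so one must verify that discarding the constant-in-$y$ part (which is legitimate since it does not affect the constraint $\sum a_k^2 k^2 = 2x$ and only decreases energy) removes any sign ambiguity, and that the boundary condition $u(0,\cdot)=0$ survives — it does, since each $\tilde a_k(0) = \sqrt{a_k^2(0)+b_k^2(0)} = 0$.

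(Note: the statement as printed reads ``there exists $u \in \A$ such that $\S(\bar u) \le \S(u)$'' which appears to have $u$ and $\bar u$ swapped relative to the intent; I read it as ``there exists $u \in \A$, odd in $y$, with $\S(u) \le \S(u')$,'' consistent with how Lemma~\ref{lm:odd} is invoked in the existence proof.)
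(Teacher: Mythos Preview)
Your ``cleaner route'' is exactly the paper's proof: in the paper's notation the sine and cosine coefficients at frequency $k>0$ are $a_k$ and $a_{-k}$, and one sets $c_k := \sqrt{a_k^2 + a_{-k}^2}$ and invokes Lemma~\ref{lm:sublinear} to obtain an odd $u \in \A$ with $\S(u) \le \S(u')$. Your first attempt (taking the odd part and then repairing the deficit) was an unnecessary detour, and your zero-mode worry is harmless---there is only one zero mode (the constant, a cosine), it does not enter the constraint since $k=0$, and dropping it only lowers the energy; the paper simply omits it without comment.
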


\begin{proof}
 Let $a_k$ be the Fourier coefficients of $u'$. For $k \in \I, k > 0$ we define:
 \begin{equation}\nonumber
  c_k(x) := \left(a_k^2(x) + a_{-k}^2(x)\right)^{1/2}.
 \end{equation}
 By Lemma~\ref{lm:sublinear} $u(x,y) := \sum_{k\in\Ip} c_k(x) \sin(ky)$ satisfies $\S(u) \le \S(u')$, $\sum_{k\in\Ip} c_k^2(x) k^2 = 2x$, and $u$ is an odd function of $y$. 
\end{proof}

\begin{remark}
 By the previous lemma we can assume that $u \in \A$, a minimizer of $\S$, is an odd function of $y$, i.e it can be written in the form
 \begin{equation}\nonumber
   u(x,y) = \sum_{k\in\Ip} a_k(x) \sin(ky).
 \end{equation}
 Then
 \begin{equation}\nonumber
  \S(u) = \int_0^1 \sum_{k\in\Ip} a_k'^2(x) + a_k^2(x)k^4 \dx,
 \end{equation}
 and by Remark~\ref{rmkae} constraint \eqref{def:constrain} turns into 
 \begin{equation}\label{constr:f}
  \sum_{k\in\Ip} a_k^2(x) k^2 = 2x, \qquad \forall x \in [0,1].
 \end{equation}
 Moreover, we can assume that
 \begin{equation}\label{a:pos}
  a_k(x) \ge 0,\quad \forall k\in \Ip, x \in [0,1].
 \end{equation}
\end{remark}

Now we are ready to show the existence of a minimizer $u \in \A$ for the functional $\S$:

\begin{proposition}\label{prop:minimizer}
 There exists $u \in \A$ such that $\S(u) < \infty$ and $\S(u) \le \S(u')$ for any $u' \in \A$.
\end{proposition}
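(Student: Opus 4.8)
\textbf{Proof proposal for Proposition~\ref{prop:minimizer}.}
The plan is to use the direct method of the calculus of variations, carried out at the level of the Fourier coefficients $\{a_k\}_{k\in\Ip}$, where (by the Fourier representation lemma, Lemma~\ref{lm:odd}, and the preceding remarks) the energy is
$\S(u)=\int_0^1\sum_{k\in\Ip}\bigl(a_k'^2+a_k^2k^4\bigr)\dx$
and the admissible class consists of nonnegative $a_k\in W^{1,2}(0,1)$ with $a_k(0)=0$ satisfying the constraint \eqref{constr:f}. First I would fix a minimizing sequence $u^{(n)}$ with coefficients $a_k^{(n)}$ and $\S(u^{(n)})\to\sigma_L$; we may assume $\sup_n\S(u^{(n)})\le\sigma_L+1=:M<\infty$. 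Since $\sum_k \int_0^1 a_k'^2\le M$ and each $a_k^{(n)}(0)=0$, Poincar\'e's inequality gives a uniform bound on $\sum_k\|a_k^{(n)}\|_{W^{1,2}(0,1)}^2\le C M$. By a diagonal argument I extract a subsequence (not relabeled) so that for every $k\in\Ip$, $a_k^{(n)}\rightharpoonup a_k$ weakly in $W^{1,2}(0,1)$ and strongly (hence uniformly) in $C([0,1])$; in particular $a_k(0)=0$ and $a_k\ge 0$.

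The second step is lower semicontinuity. For each fixed finite subset $\mathcal{F}\subset\Ip$, weak $W^{1,2}$ convergence gives
$\int_0^1\sum_{k\in\mathcal{F}}\bigl(a_k'^2+a_k^2k^4\bigr)\dx\le\liminf_n\int_0^1\sum_{k\in\mathcal{F}}\bigl(a_k'^{(n)2}+a_k^{(n)2}k^4\bigr)\dx\le\liminf_n\S(u^{(n)})=\sigma_L$,
and letting $\mathcal{F}\uparrow\Ip$ via monotone convergence yields $\S(u)\le\sigma_L$. So if the limiting coefficients define an admissible $u$, it is a minimizer.

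The main obstacle is showing that the constraint \eqref{constr:f} passes to the limit, i.e.\ that $\sum_{k\in\Ip}a_k^2(x)k^2=2x$ for all $x$ (rather than merely $\le 2x$, which is all that weak convergence plus Fatou directly give, since we only control $k^2$-weighted sums by $k^4$-weighted energies via the constraint itself). To handle this I would argue as follows. Uniform convergence on each fixed frequency plus the uniform energy bound gives, for every $x$, the inequality $\sum_k a_k^2(x)k^2\le\liminf_n\sum_k a_k^{(n)2}(x)k^2=2x$; the content is the reverse inequality, i.e.\ no mass escapes to high frequencies. Here I use the estimate from Remark~\ref{rmkae}: for any finite $\mathcal F$ and any $x$,
$\sum_{k\notin\mathcal F}a_k^{(n)2}(x)k^2\le \sum_{k\notin\mathcal F}a_k^{(n)2}(1)k^2 + \int_0^1\sum_{k\notin\mathcal F}\bigl(a_k'^{(n)2}+a_k^{(n)2}k^4\bigr)\dx$, and since $\sum_k a_k^{(n)2}(1)k^2=2$ and $\S(u^{(n)})\le M$, the tail $\sum_{k\notin\mathcal F}a_k^{(n)2}(x)k^2$ is not automatically small --- so a direct tightness argument is delicate. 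Instead I would apply Lemma~\ref{lm:sublinear} to \emph{truncate and repair}: given the limit $u$ (which satisfies $\sum_k a_k^2(x)k^2\le 2x$ with equality possibly failing on a set), define a correction carrying the deficit $2x-\sum_k a_k^2(x)k^2\ge 0$ into a single high but fixed frequency $k_0$, namely $b_{k_0}(x):=k_0^{-1}\bigl(2x-\sum_k a_k^2(x)k^2\bigr)^{1/2}$ and $b_k\equiv 0$ otherwise; one checks $b_{k_0}\in W^{1,2}$ with $b_{k_0}(0)=0$, and Lemma~\ref{lm:sublinear} gives an admissible $\tilde u$ with $c_k=\sqrt{a_k^2+b_k^2}$, $\sum_k c_k^2 k^2=2x$ exactly, and $\S(\tilde u)\le\S(u)+\int_0^1(b_{k_0}'^2+b_{k_0}^2k_0^4)\dx$. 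Since $\S(u)\le\sigma_L$, it remains to show the correction term can be made $\le\epsilon$; by a further diagonal/tightness refinement of the minimizing sequence one can arrange that the limiting deficit has arbitrarily small energy (or, alternatively, rerun the whole argument choosing the minimizing sequence so that the high-frequency tails of the energy are uniformly small, exploiting that $\sigma_L<\infty$). This produces $\tilde u\in\A$ with $\S(\tilde u)\le\sigma_L$, hence a minimizer; combined with $\S(\tilde u)\ge\sigma_L$ by definition, $\S(\tilde u)=\sigma_L<\infty$, completing the proof.
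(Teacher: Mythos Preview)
Your direct-method setup in Fourier is sound: the diagonal extraction and the lower semicontinuity via finite truncation plus monotone convergence are both correct, and you correctly isolate the passage of the constraint \eqref{constr:f} to the limit as the only real issue. The gap is in how you close that issue. The ``truncate and repair'' construction with $b_{k_0}(x)=k_0^{-1}\sqrt{d(x)}$, $d(x):=2x-\sum_k a_k^2(x)k^2$, is not justified: you assert ``one checks $b_{k_0}\in W^{1,2}$'', but $(\sqrt d)'=d'/(2\sqrt d)$ need not be in $L^2$ when $d$ touches zero, and you give no argument. More seriously, even if the correction energy could be made $\le\epsilon$ for each $\epsilon$, this would only produce a new minimizing \emph{sequence} $\tilde u_\epsilon\in\A$, not a minimizer; you would be back where you started. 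The phrase ``by a further diagonal/tightness refinement of the minimizing sequence one can arrange that the limiting deficit has arbitrarily small energy'' is not an argument --- it is precisely the missing compactness statement restated.

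There is, however, a one-line fix within your framework that avoids repair entirely. From the bending bound $\int_0^1\sum_k a_k^{(n)2}k^4\,\dx\le M$ you get $L^1_x$-tightness of the constraint sums: $\int_0^1\sum_{k>K}a_k^{(n)2}(x)k^2\,\dx\le K^{-2}M$ uniformly in $n$. Hence $\int_0^1\sum_{k\le K}a_k^{(n)2}(x)k^2\,\dx\ge 1-K^{-2}M$, and by uniform convergence on each fixed mode, $\int_0^1\sum_{k\le K}a_k^2(x)k^2\,\dx\ge 1-K^{-2}M$; letting $K\to\infty$ gives $\int_0^1\sum_k a_k^2(x)k^2\,\dx\ge 1$. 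Combined with your Fatou inequality $\sum_k a_k^2(x)k^2\le 2x$, equality must hold a.e., so the limit $u$ lies in $\A$ and is the minimizer.

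The paper takes a different route: it works at the level of $u$ rather than Fourier coefficients and invokes a compensated-compactness lemma of Kohn--M\"uller (Lemma~2.2 in \cite{bib-bob+muller1}) stating that if $u_n\rightharpoonup u$ in $H^1$ and $u_{n,yy}$ lies in a compact subset of $H^{-1}$, then $u_{n,y}\to u_{,y}$ strongly in $L^2_{\mathrm{loc}}$. Boundedness of $u_{n,yy}$ in $L^2$ (from the bending term) gives the required $H^{-1}$-compactness, and strong $L^2$ convergence of $u_{n,y}$ passes the constraint directly. Your Fourier approach is more elementary and self-contained once patched as above; the paper's approach is shorter but imports an external lemma. (You also tacitly assume $\sigma_L<\infty$; the paper supplies this via the explicit construction of Lemma~\ref{lm:construction}.)
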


Before we prove the proposition, let us state and prove a simple lemma, which will be used for the construction of a function $u \in \A$ with finite energy:
\begin{lemma}\label{lm:construction}
 There exists a universal constant $C$ which satisfies the following. For any $0 < b \le 1$ there exists a function $u : [0,1]\times[-L,L] \to \R$, which is $2L$-periodic in $y$ and such that
 \begin{align}
  \fint_{-L}^L u_{,y}^2(x,y) \dy &= 2x, \qquad x \in [0,b],\label{construction:1}\\
  u(x,y) &= 0, \qquad x \in [4b,1],\ y \in [-L,L],\label{construction:2}\\
  \lint \left|\frac{\partial^{\alpha+\beta} u}{\partial_x^\alpha \partial_y^\beta}(x,y)\right|^2 \dy &  \le C x^{2-2\alpha-\beta},\quad \alpha \in \{0,1,2\},\ \beta \ge 0, \ x \in [0,1].\label{construction:3}
 \end{align}  
\end{lemma}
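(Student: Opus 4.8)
The plan is to construct $u$ explicitly by a single profile that is switched on/off by a cutoff in the $x$-variable, with the profile chosen so that the constraint $\fint_{-L}^L u_{,y}^2\dy = 2x$ is satisfied exactly for small $x$. First I would fix a single nontrivial building block: take the lowest admissible frequency $k_0 := \pi/L \le \pi$ (recall $L\ge 1$) and set, for a smooth cutoff $\psi:[0,\infty)\to[0,1]$ with $\psi\equiv 1$ on $[0,1]$ and $\psi\equiv 0$ on $[2,\infty)$,
\begin{equation}\nonumber
 u(x,y) := A(x)\,\sin(k_0 y), \qquad A(x) := \frac{\sqrt{2x}}{k_0}\,\psi\!\left(\frac{x}{b}\right).
\end{equation}
Then $u$ is $2L$-periodic in $y$ and supported in $x\in[0,2b]\subset[0,4b]$, so \eqref{construction:2} holds; moreover for $x\in[0,b]$ we have $A(x) = \sqrt{2x}/k_0$, hence $\fint_{-L}^L u_{,y}^2\dy = A(x)^2 k_0^2 = 2x$, giving \eqref{construction:1}.

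The remaining point is the derivative bound \eqref{construction:3}. Since $u_{,y}^\beta$ only contributes a factor $k_0^\beta \le \pi^\beta$ (a universal constant, because $k_0\le\pi$), and $\lint \sin^2(k_0 y)\dy$ and its analogues are bounded universally, it suffices to estimate $\int_{0}^{1}$-type pointwise bounds on $A$, $A'$, $A''$ of the form $|A^{(\alpha)}(x)|^2 \lesssim x^{2-2\alpha}$ for $\alpha\in\{0,1,2\}$. Here $A(x) = c\,\sqrt{x}\,\psi(x/b)$ with $c = \sqrt{2}/k_0 = \sqrt2\,L/\pi$; note $c \le \sqrt{2}L/\pi$, but since on the support $x\lesssim b\le 1$ one has $c^2 x^{2-2\alpha} = (2L^2/\pi^2) x^{2-2\alpha}$, which is \emph{not} universally bounded — so I need to be a little more careful. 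The fix is to observe that $A(x)^2 k_0^2 = 2x\,\psi(x/b)^2 \le 2x$ on all of $[0,1]$, so the factor $k_0^2$ built into the normalization exactly cancels the $L$-dependence: I would phrase \eqref{construction:3} equivalently in terms of $v(x,y):=k_0 u(x,y) = \sqrt{2x}\,\psi(x/b)\sin(k_0 y)$, prove the bound $\lint |\partial_x^\alpha\partial_y^\beta v|^2\dy \lesssim x^{2-2\alpha-\beta}$ with a universal constant (this is now a clean computation, since $v$ has no $L$-dependence in its amplitude and only universal $k_0^\beta\le\pi^\beta$ factors from the $y$-derivatives), and then divide by $k_0^2$ and use $k_0^{-2}\le 1$ in the direction that helps; since $\partial_y$ brings down $k_0\le\pi$ but we want an \emph{upper} bound, the factor $k_0^{\beta-1}$ with $\beta\ge 1$ is $\le \pi^{\beta-1}$, and for $\beta=0$ we simply have $u = k_0^{-1}v$ with $k_0^{-2}\le 1$, so all cases give $\lesssim x^{2-2\alpha-\beta}$ with a universal constant. [If one prefers to avoid this bookkeeping, an even cleaner choice is $k_0 := \pi$ exactly, which is always an admissible frequency since $\pi\mathbb N/L \ni \pi$ iff $L\in\mathbb N$ — which need not hold, so this shortcut is unavailable and the rescaling argument above is the right one.]

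For the derivative estimates on $v$: writing $g(x) := \sqrt{2x}\,\psi(x/b)$, we have $g(x) = 0$ for $x\ge 2b$, and on $(0,2b)$, $g'(x) = \psi(x/b)/\sqrt{2x} + \sqrt{2x}\,\psi'(x/b)/b$ and $g''(x) = -\psi(x/b)/(2\sqrt{2x}\,x) + \psi'(x/b)\,\sqrt{2}/(b\sqrt{x}) + \sqrt{2x}\,\psi''(x/b)/b^2$. Using $|\psi|,|\psi'|,|\psi''|\le C$ and that on the support $x\le 2b$ so $1/b \le 2/x$, each term of $g'$ is $\lesssim x^{-1/2}$ and each term of $g''$ is $\lesssim x^{-3/2}$, giving $|g(x)|^2\lesssim x$, $|g'(x)|^2\lesssim x^{-1}$, $|g''(x)|^2\lesssim x^{-2}$, i.e. $|g^{(\alpha)}(x)|^2\lesssim x^{2-2\alpha}$; combined with $\lint \sin^2(k_0 y)\dy\lesssim 1$ and $k_0^{2\beta}\lesssim 1$ this yields \eqref{construction:3}. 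The main (and only) obstacle is the one flagged above: keeping the constant in \eqref{construction:3} genuinely \emph{universal} (independent of $L$), which is resolved by the normalization $A = \sqrt{2x}/k_0$ carrying exactly the factor $k_0^{-1}$ that neutralizes the $k_0$ from each $y$-derivative, together with $k_0\le\pi$ so that the finitely many leftover powers of $k_0$ are harmless.
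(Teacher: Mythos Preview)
Your single-frequency construction cannot satisfy \eqref{construction:3} with a universal constant; the approach has both a computational slip and a structural obstruction.

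Computationally: with $g(x)=\sqrt{2x}\,\psi(x/b)$ the leading term of $g''$ is $-\psi/(2\sqrt{2}\,x^{3/2})$, so $|g''|^2\lesssim x^{-3}$, not $x^{-2}$; in fact the correct pattern is $|g^{(\alpha)}|^2\lesssim x^{1-2\alpha}$, one power of $x$ short of the $x^{2-2\alpha}$ you claim. Also, your assertion ``$k_0^{-2}\le 1$'' is backwards: $k_0=\pi/L\le\pi$ gives $k_0^{-2}=L^2/\pi^2$, which is \emph{large}, so for $\beta=0$ the factor $k_0^{-2}$ does not help.

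Structurally, no fixed frequency can work. If $u=A(x)\sin(ky)$ with the constraint forcing $A^2k^2\sim x$, then
\[
\lint u^2\dy \sim \frac{x}{k^2},\qquad \lint u_{,x}^2\dy \sim \frac{1}{k^2 x},\qquad \lint u_{,yy}^2\dy \sim x k^2.
\]
The required bounds in \eqref{construction:3} are $x^2$, $1$, $1$ respectively; the first forces $k^2\gtrsim 1/x$ while the third forces $k^2\lesssim 1/x$, so the active frequency must scale like $k^2\sim 1/x$ as $x\to 0$. In particular, for any fixed $k$ the quantity $\lint u_{,x}^2\dy\sim 1/(k^2x)$ is not even integrable near $0$, so your $u$ has $\S(u)=\infty$.

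The paper's proof supplies exactly this varying frequency via a self-similar branching construction: it fixes $P\sim 1$ (an admissible frequency) and a smooth profile $f$ supported in $[1/4,4]$ satisfying the compatibility identity $f^2(t)+\tfrac14 f^2(4t)=2t$ on $[1/4,1]$, then sets $a_{k_n}(x)=P^{-1}4^{-n}f(4^n x)$ for $k_n=2^nP$. At scale $x\sim 4^{-N}$ only the two frequencies $k_N,k_{N+1}$ are active and $k_N^2\sim 4^N\sim 1/x$, which is precisely the scaling identified above; the estimate \eqref{construction:3} then follows by a direct computation. The branching is therefore essential, not a convenience.
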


\begin{proof}
 Let $f : [0,\infty) \to [0,1]$ be a fixed $C^2$ function with $\supp f \subset [1/4,4]$ which for $t \in [1/4,1]$ satisfies:
 \begin{equation}\label{cond:f}
  f^2(t) + \frac{1}{4} f^2(4t) = 2t.
 \end{equation}
 To prove the existence of such a function $f$, we first write $f(t) = \sqrt{t}\phi(t)$ and look for a function $\phi(t)$ which for $t \in [1/4,1]$ satisfies $\phi^2(t) + \phi^2(4t)=2$. Consider a smooth function $\bar \phi \ge 0$ with $\supp \bar \phi \subset [1/4,4]$ such that $\bar \phi = 1$ in $[1/2,2]$, and define 
\begin{equation}\nonumber
 \phi(t) := \frac{\sqrt{2} \bar \phi(t)}{\left(\bar \phi(t)^2 + \bar \phi(4t)^2\right)^{1/2}}, \qquad \phi(4t) := \frac{\sqrt{2} \bar \phi(4t)}{\left(\bar \phi(t)^2 + \bar \phi(4t)^2\right)^{1/2}}, \qquad t \in [1/4,1],
\end{equation}
and by $0$ elsewhere. Since $\bar \phi(t)^2 + \bar \phi(4t)^2 \ge 1$ for $t \in [1/4,1]$, $\phi$ is smooth and satisfies the desired condition $\phi^2(t) + \phi^2(4t) = 2$, $t \in [1/4,1]$. 
  
 We use the function $f$ to define the Fourier coefficients $a_k$ of $u$. We set $P := \pi \frac{\lfloor L \rfloor}{L}$ (recall that $L \ge 1$), and let $n_0 \ge 0$ be such that $\frac{1}{4} \le 4^{n_0} b \le 1$. 
%$a \in [\frac{1}{4}4^{-n_0}, 4^{-n_0}]$ (if $a=0$, then $n_0 := \infty$). 
 Then for $n_0 \le n < \infty$ we define:
 \begin{equation}\nonumber
  a_{k_n}(x) := P^{-1} 4^{-n} f(4^n x), \quad k_n := 2^n P,
 \end{equation}
 and $a_k=0$ for all the remaining $k \in \Ip$. Then for $n_0 \le N < \infty$ and $x \in [\frac{1}{4}4^{-N},4^{-N}]$ we have 
 \begin{multline}\nonumber
  \sum_{k\in\Ip} a_k(x)^2 k^2 = \sum_{n \ge n_0} P^{-2} 4^{-2n} f^2(4^nx) \left(2^n P\right)^2 = 4^{-2N} f^2(4^N x) 2^{2N}+ 4^{-2(N+1)} f^2(4^{(N+1)}x) 2^{2(N+1)} \\ = 
  4^{-N} \left( f^2(4^N x) + \frac{f^2(4\cdot4^Nx)}{4}\right) \stackrel{\eqref{cond:f}}{=} 4^{-N} 4^N 2x = 2x,
 \end{multline}
 and~\eqref{construction:1} follows. Since $n_0$ is such that $a_k(x)=0$ for any $x \in [4b,1]$ and any $k \in \Ip$, relation \eqref{construction:2} holds. 

 It remains to show the estimate on the derivatives (see~\eqref{construction:3}). For $n_0 \le N < \infty$ and $x \in [\frac{1}{4}4^{-N},4^{-N}]$, boundedness of $f$, $f'$, and $f''$ implies 
 \begin{equation}\nonumber
\fint_{-L}^L \left|\frac{\partial^{\alpha+\beta} u}{\partial_x^\alpha \partial_y^\beta}(x,y)\right|^2 \dy = \sum_{k\in\Ip} \left( \frac{\partial^\alpha a_k}{\partial x^\alpha}(x)\right)^2 k^{2\beta} \lesssim 4^{-2N} 4^{2N\alpha} 4^{N\beta} \lesssim x^{2-2\alpha-\beta}.
 \end{equation}
\end{proof}

\begin{proof}[Proof of Proposition~\ref{prop:minimizer}]
 By applying Lemma~\ref{lm:construction} with $b=1$ we obtain a function $u \in \A$ such that $\S(u) < \infty$. %which is $2L$-periodic in $y$ such that $\S(u) < \infty$ and $u$ satisfies constraint~\eqref{def:constrain}. 
 Indeed,~\eqref{construction:1} implies \eqref{def:constrain}, and by~\eqref{construction:3} 
 \begin{equation}\nonumber
  \fint_{-L}^L u_{,x}^2(x,y) + u_{,yy}^2(x,y) \dy \lesssim 1 \quad \forall x \in [0,1].
 \end{equation}
 Integrating the above relation in $x$ gives $\S(u) \lesssim 1$. 

 Now let $u_n \in \A$ be a minimizing sequence for $\S$. Then $u_n$ is bounded in $H^1(\Omega)$ and $u_{n,yy}$ is bounded in $L^2(\Omega)$. Passing to a subsequence one has
 \begin{equation}\label{prop31:1}
  \begin{array}{rcll}
   u_n &\rightharpoonup& u \quad &\textrm{in } H^1(\Omega),\\
   u_{n,yy} &\rightharpoonup& u_{,yy} \quad &\textrm{in } L^2(\Omega). 
  \end{array}
 \end{equation}
 By a standard lower semi-continuity result we have that $\S(u) \le \liminf_{n \to \infty} \S(u_n)$. Hence it remains to show that the limit $u \in \A$, in particular that it satisfies the constraint~\eqref{def:constrain}. This is a direct consequence of the following lemma:
 
 \begin{lemma*}[Lemma 2.2 in~\cite{bib-bob+muller1}]
  Let $\Omega \subset \R^2$ and let $u_n : \Omega \to \R$ be a sequence such that $u_n \rightharpoonup u$ in $H^1(\Omega)$ and such that $u_{n,yy}$ lies in a compact subset of $H^{-1}(\Omega)$. Then
  \begin{equation}\nonumber
   u_{n,y} \to u_{,y} \quad \textrm{in } L^2_{loc}(\Omega).
  \end{equation}
 \end{lemma*}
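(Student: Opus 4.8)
The plan is to reduce to the case $u\equiv 0$, localize with a smooth cutoff, and then integrate by parts once in the $y$-variable, disposing of one resulting term by Rellich's theorem and of the other by the $H^{-1}$-precompactness of $\{u_{n,yy}\}$. First I would replace $u_n$ by $u_n-u$: this preserves both hypotheses — weak $H^1(\Omega)$-convergence to $0$, and precompactness of the second $y$-derivatives in $H^{-1}(\Omega)$, since $u_{,yy}$ is a fixed element of $H^{-1}(\Omega)$ and translating a precompact set keeps it precompact — and the conclusion for $u_n-u$ is equivalent to the one for $u_n$. So assume $u_n\rightharpoonup 0$ in $H^1(\Omega)$ and $\{u_{n,yy}\}$ precompact in $H^{-1}(\Omega)$. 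To prove $u_{n,y}\to 0$ in $L^2_{loc}(\Omega)$ it suffices, given a compact $K\subset\Omega$, to pick $\varphi\in C_c^\infty(\Omega)$ with $\varphi\equiv 1$ on $K$ and $0\le\varphi\le 1$, and to show $\int_\Omega\varphi^2 u_{n,y}^2\,\dxy\to 0$.

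Since $\varphi$ has compact support, integrating by parts in $y$ produces no boundary terms and gives
\begin{equation}\nonumber
 \int_\Omega \varphi^2 u_{n,y}^2\,\dxy = -\int_\Omega (\varphi^2)_{,y}\,u_n\,u_{n,y}\,\dxy \;-\; \langle u_{n,yy},\,\varphi^2 u_n\rangle_{H^{-1}(\Omega),H^1_0(\Omega)} =: A_n + B_n,
\end{equation}
where the last pairing equals the formal integral $-\int_\Omega \varphi^2 u_{n,yy}u_n\,\dxy$ and is well defined because $u_{n,yy}=\partial_y(u_{n,y})$ with $u_{n,y}\in L^2(\Omega)$. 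For $A_n$: $(\varphi^2)_{,y}$ is bounded, $u_{n,y}$ is bounded in $L^2(\Omega)$ (weak convergence implies boundedness), and $u_n\to 0$ strongly in $L^2_{loc}(\Omega)$ by the Rellich--Kondrachov theorem, so $A_n\to 0$ by Cauchy--Schwarz. For $B_n$: multiplication by $\varphi^2$ sends $u_n$ (weakly null in $H^1(\Omega)$) to a sequence that is weakly null in $H^1_0(\Omega)$ — indeed $\nabla(\varphi^2 u_n)=\varphi^2\nabla u_n + u_n\nabla(\varphi^2)$ is the sum of a weakly null term and, again by Rellich, a strongly null one — hence $\varphi^2 u_n\rightharpoonup 0$ in $H^1_0(\Omega)$. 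Meanwhile $\{u_{n,yy}\}$ is precompact in $H^{-1}(\Omega)$, and any strong subsequential limit there must coincide with the distributional limit of $u_{n,yy}$, which is $0$. Pairing a strongly convergent sequence in $H^{-1}(\Omega)$ with a weakly convergent one in $H^1_0(\Omega)$ converges to the pairing of the limits, namely $0$; running this along every subsequence yields $B_n\to 0$. Thus $\int_\Omega\varphi^2 u_{n,y}^2\to 0$, so $\int_K u_{n,y}^2\to 0$, and the proof is complete.

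The one genuinely delicate step is the treatment of $B_n$, and it is exactly here that precompactness of $\{u_{n,yy}\}$ in $H^{-1}(\Omega)$ (not mere boundedness) is indispensable: with boundedness alone one would only recover $u_{n,y}\rightharpoonup u_{,y}$, not strong convergence. The care required is the standard ``strongly convergent times weakly null'' argument phrased along subsequences, together with identifying the $H^{-1}$-limit via test functions; the remaining ingredients (the cutoff integration by parts and the Rellich step for $A_n$) are routine. For the periodic setting relevant to this paper one could alternatively argue by Fourier series in $y$, splitting $u_{n,y}$ into low and high $y$-frequencies and controlling the high-frequency tail by the $H^{-1}$-precompactness of $\{u_{n,yy}\}$, but the cutoff argument above works verbatim for an arbitrary $\Omega\subset\R^2$.
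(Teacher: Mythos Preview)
The paper does not prove this statement: it is quoted verbatim as Lemma~2.2 of Kohn--M\"uller and used as a black box in the proof of Proposition~\ref{prop:minimizer}, so there is no in-paper argument to compare against. Your proof is correct. The reduction to $u\equiv 0$, the cutoff, and the single integration by parts
\[
\int_\Omega \varphi^2 u_{n,y}^2 \;=\; -\int_\Omega (\varphi^2)_{,y}\,u_n\,u_{n,y} \;-\; \langle u_{n,yy},\,\varphi^2 u_n\rangle_{H^{-1},H^1_0}
\]
are exactly the right moves; the justification of this identity via $\langle u_{n,yy},\varphi^2 u_n\rangle = -\int_\Omega u_{n,y}(\varphi^2 u_n)_{,y}$ is clean, $A_n\to 0$ is indeed Rellich plus Cauchy--Schwarz, and for $B_n$ you correctly upgrade precompactness to full strong convergence $u_{n,yy}\to 0$ in $H^{-1}$ by identifying the unique subsequential limit distributionally, then pair strong-$H^{-1}$ against weak-$H^1_0$. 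This is in fact the standard compensated-compactness style argument one finds in the Kohn--M\"uller paper; your closing remark that mere boundedness of $\{u_{n,yy}\}$ in $H^{-1}$ would not suffice is also to the point.
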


 Indeed, by~\eqref{prop31:1} $u_n$ converges weakly in $H^1(\Omega)$. Moreover, $u_{n,yy}$ is bounded in $L^2(\Omega)$, and so it lies in a compact subset of $H^{-1}(\Omega)$ (here we use that $L^2$ is compactly imbedded into $H^{-1}$, which follows by duality argument from the Rellich-Kondrachev Theorem). Then the lemma applies, in particular we obtain that for a.e. $x \in [0,1]$:
 \begin{equation}\nonumber
  \lint u_{,y}^2 \dy = 2x.
 \end{equation}
 This completes the proof of the proposition.  
\end{proof}

The next lemma shows that the equality in the constraint~\eqref{constr:f} can be relaxed. It will be used in the proof of the Euler-Lagrange equations and of the existence of the Lagrange multiplier. Later, it will help to simplify the construction of competitors for a minimizer $u$ of $\S$. 

\begin{lemma}[Relaxation of the constraint]\label{lm:relaxed:constrain}
We have
\begin{equation}\label{eq:relaxed}
 \min_{\sum_{k\in\Ip} a_k^2(x)k^2 = 2x} \S(u) =  \min_{\stackrel{\sum_{k\in\I} a_k^2(x)k^2 \ge 2x}{a_k(0)=0}} \S(u).
\end{equation} 
\end{lemma}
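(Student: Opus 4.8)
The plan is to prove the two inequalities in~\eqref{eq:relaxed} separately. The inequality ``$\ge$'' is immediate: any $u$ with $\sum_{k\in\Ip}a_k^2(x)k^2=2x$ automatically satisfies $a_k(0)=0$ and $\sum_k a_k^2 k^2\ge 2x$, hence is admissible for the right-hand side, and minimising over a larger class only lowers the value. Moreover, applying Lemma~\ref{lm:sublinear} with $c_k=\sqrt{a_k^2+a_{-k}^2}$ exactly as in the proof of Lemma~\ref{lm:odd} reduces the right-hand minimisation to \emph{odd} competitors $u(x,y)=\sum_{k\in\Ip}a_k(x)\sin(ky)$; and replacing each $a_k$ by $|a_k|$ (which changes neither $\S(u)$ nor $\sum_k a_k^2 k^2$, since $(|a_k|')^2=(a_k')^2$ a.e.) we may in addition assume $a_k\ge0$. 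So it remains to show that the relaxed problem $\inf\{\S(u):a_k(0)=0,\ \sum_k a_k^2 k^2\ge 2x,\ a_k\ge0\}$ has value $\le\sigma_L$.

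I would do this by exhibiting a minimiser $u^*$ of the relaxed problem which in fact satisfies the constraint with equality. Existence of $u^*$ follows by the same argument as Proposition~\ref{prop:minimizer}: a minimising sequence $u_n$ is bounded in $H^1$ with $u_{n,yy}$ bounded in $L^2$, so along a subsequence $u_n\rightharpoonup u^*$ in $H^1$, $u_{n,yy}\rightharpoonup u^*_{,yy}$ in $L^2$, $\S(u^*)\le\liminf_n\S(u_n)$, while the cited Lemma~2.2 of~\cite{bib-bob+muller1} gives $u_{n,y}\to u^*_{,y}$ in $L^2_{\mathrm{loc}}$, whence (using periodicity in $y$) $\lint (u^*_{,y})^2(x,\cdot)=\lim_n\lint(u_{n,y})^2(x,\cdot)\ge 2x$ for a.e.\ $x$, and $u^*(0,\cdot)=0$ passes to the limit. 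By Remark~\ref{rmkae} the function $G^*(x):=\sum_k (a^*_k)^2(x)k^2$ is continuous with $G^*(0)=0$ and $G^*\ge 2x$; the goal is $G^*\equiv 2x$ on $[0,1]$.

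Suppose not, and let $I=(p,q)$ be a connected component of $\{x\in(0,1]:G^*(x)>2x\}$; by continuity $G^*(p)=2p$ (also if $p=0$, since then $a^*_k(0)=0$) and, when $q<1$, $G^*(q)=2q$. On $I$ the constraint is \emph{strictly} slack, so for any $k_0$ with $a^*_{k_0}\not\equiv0$ on $I$ and any $[\alpha,\beta]\Subset I\cap\{a^*_{k_0}>0\}$, the perturbation $(a^*_{k_0})^2\mapsto (a^*_{k_0})^2+t\varphi$ with $\varphi\in C^\infty_c(\alpha,\beta)$ keeps $(a^*_{k_0})^2+t\varphi\ge0$ and the constraint $\ge 2x$ for all small $|t|$; optimality of $u^*$ then forces the first variation of $\S$ to vanish, and a short computation in the variable $b=(a^*_{k_0})^2$ (energy density $\tfrac{(b')^2}{4b}+bk_0^4$) gives the interior Euler--Lagrange equation $(a^*_{k_0})''=k_0^4 a^*_{k_0}$ on $\{a^*_{k_0}>0\}\cap I$, together with the natural condition $(a^*_k)'(1)=0$ if $q=1$. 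Consequently each $(a^*_k)^2$ is convex on $I$ (on $\{a^*_k>0\}$ one has $((a^*_k)^2)''=2k_0^4(a^*_k)^2+2(a^*_k{}')^2\ge0$, and the interfaces with $\{a^*_k=0\}$ and the vanishing set are handled by ODE uniqueness, at an interface $((a^*_k)^2)'=0$), hence $G^*|_I$ is convex. The case $q=1$ is excluded: then $a^*_k=c_k\cosh(k^2(x-1))$ with $c_k\ge0$ on $I$, so $G^*(p)=\sum c_k^2\cosh^2(k^2(1-p))k^2\ge\sum c_k^2 k^2=G^*(1)$, i.e.\ $2\ge 2p=G^*(p)\ge G^*(1)>2$, a contradiction. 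Thus $q<1$, and $G^*$ is convex on $[p,q]$ and equals the affine function $2x$ at both endpoints, so $G^*\le 2x$ on $[p,q]$ — contradicting $G^*>2x$ on $I$. Hence no such $I$ exists, $G^*\equiv 2x$, $u^*$ is admissible for the left-hand side of~\eqref{eq:relaxed}, and ``$\le$'' (hence equality, with both minima attained at $u^*$) follows.

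The main obstacle is this Euler--Lagrange/convexity step: one must deal with the degeneration of the density $\tfrac{(b')^2}{4b}$ on $\{a^*_k=0\}$ — whence the restriction to compact subintervals of $\{a^*_k>0\}$ and the ODE-uniqueness patching needed to propagate convexity of $(a^*_k)^2$ across the whole of $I$ — and separately handle the free boundary at $x=1$. (This is only a ``local'' version of the Euler--Lagrange analysis carried out in the rest of Section~\ref{sect:scalar}; it uses none of the measure-valued-multiplier machinery that later relies on the present lemma, so there is no circularity.)
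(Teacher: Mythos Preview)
Your proposal is correct and follows essentially the same route as the paper: take a minimiser of the relaxed problem (existence via the direct method, as in Proposition~\ref{prop:minimizer}), derive the Euler--Lagrange equation on each connected component of the slack set $\{G^*>2x\}$, deduce convexity of $G^*$ there, and reach a contradiction with the endpoint values $G^*(p)=2p$, $G^*(q)=2q$ (handling the boundary component touching $x=1$ separately via the natural condition $(a_k^*)'(1)=0$). Your treatment of the $q=1$ case via the explicit $\cosh$ solution is a minor cosmetic variant of the paper's monotonicity argument.

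One remark on what you flag as ``the main obstacle'': the degeneracy of $\tfrac{(b')^2}{4b}$ on $\{a_k^*=0\}$ is a self-inflicted complication, and the paper avoids it entirely. Your reduction to $a_k^*\ge 0$ is \emph{without loss of generality} (replacing $a_k$ by $|a_k|$ preserves the energy), so the minimiser $u^*$ with $a_k^*\ge 0$ is also a minimiser over \emph{all} real-valued $a_k$. Hence on the slack interval $I$ you may vary $a_k^*\mapsto a_k^*+t\varphi$ freely for $\varphi\in C_c^\infty(I)$ and small $|t|$ (no sign constraint), obtaining $(a_k^*)''=k^4 a_k^*$ in $\mathcal{D}'(I)$ and hence classically on all of $I$. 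Then $(G^*)''=2\sum_k\big[(a_k^*{}')^2+k^4(a_k^*)^2\big]k^2\ge 0$ directly, with no need for ODE-uniqueness patching across $\{a_k^*=0\}$. This is exactly the paper's (terse) derivation.
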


\begin{proof}
 First, following the proof of Proposition~\ref{prop:minimizer}, one can show the existence of a minimizer $u'$ of $\S(u')$ under the constraint $\sum_{k\in\I} a_k^2 k^2 \ge 2x$. Based on $u'$, let $b_k \ge 0$ be its Fourier coefficients and set $f(x) := \sum_{k\in\I}  b_k^2(x)k^2$. 

Let us assume that $f(x) > 2x$ in a set of positive measure. As in Remark~\ref{rmkae}, we observe that $f \in W^{1,1}(0,1)$ is a continuous functions. Hence $\left\{x: f(x) > 2x\right\}$ is a relatively open subset of $(0,1]$, and as such it can be written as a countable union of relatively open intervals. Consider first $(a,b) \subset (0,1)$ being one such interval. Then $f(a)=2a$ and $f(b)=2b$, and $f(x) > 2x$ for $x \in (a,b)$. We make a variation of $u'$ with a compact support in $(a,b)$ to derive the Euler-Lagrange equation for the coefficients of $u'$: 
 \begin{equation}\nonumber
  b_k''(x) = b_k(x)k^4 \ge 0, \quad x \in (a,b).
 \end{equation}
 Hence in the interval $(a,b)$, the coefficients $b_k$ are convex functions. Therefore $f$ is also a convex function in the interval $(a,b)$, which gives a contradiction with $f(a)=2a, f(b)=2b,$ and $f(x) > 2x$. 

 It remains to consider the case $(a,1]$. In this case we are missing the information $f(b)=2b$, and so we need to argue differently. We observe that in this case we obtain boundary conditions for $b_k$, more precisely there holds $b_k'(1) = 0$. Since, by the Euler-Lagrange equation, each $b_k$ is convex in the interval $(a,1)$, and its derivative vanishes at $1$, we obtain that each $b_k$ is a non-increasing function. Therefore $f$ is also non-increasing, in particular $f \le f(a) = 2a$ in $(a,1]$, a contradiction with $f(x) > 2x$ in $(a,1]$. 

 We have shown that $f(x) = 2x$ for $x \in [0,1]$, which concludes the proof. 
\end{proof}

\begin{cor}\label{cor:ak:positive}
 Let $u$ be an odd minimizer of $\S$ which satisfies the constraint~\eqref{def:constrain}. Then for each $k \in\Ip$ either
 \begin{equation}\nonumber
  a_k(x) > 0, \quad x \in (0,1],
 \end{equation}
 or
 \begin{equation}\nonumber
  a_k(x) = 0, \quad x \in [0,1].
 \end{equation}
\end{cor}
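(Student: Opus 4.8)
The plan is to combine the relaxation of the constraint from Lemma~\ref{lm:relaxed:constrain} with one-sided variations of a single Fourier coefficient and a comparison argument for the operator $-\partial_{xx}+k^4$. Throughout I use that $u$ is odd with coefficients $a_k$, $k\in\Ip$, normalised so that $a_k\ge 0$ (see~\eqref{a:pos}). By Lemma~\ref{lm:relaxed:constrain}, $u$ is also a minimizer of $\S$ over the larger class in which the constraint $\sum_{k\in\I}a_k^2(x)k^2=2x$ is replaced by $\sum_{k\in\I}a_k^2(x)k^2\ge 2x$ (keeping $a_k(0)=0$). Fix $k\in\Ip$. For $t\ge 0$ and any $\psi\in C_c^\infty((0,1])$ with $\psi\ge 0$, replacing only the $k$-th coefficient $a_k$ by $a_k+t\psi$ and leaving all other coefficients unchanged keeps the relaxed constraint satisfied, since $a_k\ge 0$ gives $(a_k+t\psi)^2\ge a_k^2$; hence the right derivative at $t=0$ of $t\mapsto\int_0^1(a_k'+t\psi')^2+k^4(a_k+t\psi)^2\dx$ is nonnegative, i.e.
\begin{equation}\nonumber
 \int_0^1 a_k'\psi'+k^4a_k\psi\dx\ge 0\qquad\text{for all }0\le\psi\in C_c^\infty((0,1]).
\end{equation}
Thus $a_k\in W^{1,2}(0,1)\subset C^0[0,1]$ is a nonnegative weak supersolution of $-\partial_{xx}+k^4$ on $(0,1)$, and allowing $\psi(1)\neq 0$ additionally encodes a Neumann-type inequality at $x=1$.

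Next I would exclude interior zeros. Suppose $a_k(x_0)=0$ for some $x_0\in(0,1)$, and let $[x_0-r,x_0+r]\subset(0,1)$. Let $\bar a$ solve $-\bar a''+k^4\bar a=0$ on $(x_0-r,x_0+r)$ with $\bar a(x_0\pm r)=a_k(x_0\pm r)$; by the comparison (weak maximum) principle for the supersolution $a_k$ one has $a_k\ge\bar a\ge 0$ on $[x_0-r,x_0+r]$. Since $\bar a(x_0)$ is a strictly positive multiple of $a_k(x_0-r)+a_k(x_0+r)$, the chain $0=a_k(x_0)\ge\bar a(x_0)\ge 0$ forces $a_k(x_0-r)=a_k(x_0+r)=0$; as $r>0$ is arbitrary, $a_k$ vanishes on a neighbourhood of $x_0$. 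Hence $\{a_k=0\}$ is relatively open (and obviously relatively closed) in the connected set $(0,1)$, so $a_k\equiv 0$ on $[0,1]$. Therefore either $a_k\equiv 0$, or $a_k>0$ on $(0,1)$.

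It remains to rule out $a_k(1)=0$ in the second case. Set $c:=a_k(\tfrac12)>0$ and $v(x):=c\cosh(k^2(x-1))/\cosh(k^2/2)$ on $[\tfrac12,1]$, so that $v>0$, $-v''+k^4v=0$, $v(\tfrac12)=c$ and $v'(1)=0$. Replace the coefficient $a_k$ by $\tilde a_k:=a_k$ on $[0,\tfrac12]$ and $\tilde a_k:=\max(a_k,v)$ on $[\tfrac12,1]$; then $\tilde a_k\ge a_k\ge 0$ and $\tilde a_k(0)=0$, so the resulting function is admissible for the relaxed problem. On each connected component of the open set $\{v>a_k\}\cap(\tfrac12,1]$ the function $v$ coincides with the unique minimizer of $\int(w'^2+k^4w^2)$ among functions matching $a_k$ at the endpoints of the component — with the natural condition $v'(1)=0$ at the free end $x=1$ on the component reaching $1$, which is nonempty because $v(1)>0=a_k(1)$. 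Since $a_k$ is itself a competitor on every such component and $a_k\not\equiv v$ on the component reaching $x=1$, replacing $a_k$ by $\tilde a_k$ strictly decreases $\S$, contradicting the minimality of $u$. Hence $a_k(1)>0$, which together with the previous paragraph yields Corollary~\ref{cor:ak:positive}.

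The step I expect to be the main obstacle is the rigorous justification of the comparison/strong minimum principle for the weak supersolution $a_k$ and of the natural boundary behaviour at the free endpoint $x=1$; the interior part is the classical strong minimum principle, while the endpoint is handled cleanly by the monotone competitor above, which sidesteps any delicate boundary-regularity discussion.
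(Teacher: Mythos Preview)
Your proof is correct and rests on the same two ingredients as the paper's: the relaxation Lemma~\ref{lm:relaxed:constrain} and competitors built from solutions of $-w''+k^4w=0$ (in particular $\cosh(k^2(x-1))$). The organization differs slightly: you first extract the weak supersolution inequality (which the paper establishes later, in the proof of Lemma~\ref{lm:lambda}) and invoke the strong minimum principle to kill interior zeros, then treat $x=1$ by the $\cosh$--competitor; the paper instead handles both at once by a single crossing argument with $\epsilon f_k(x)=\epsilon\big(e^{k^2(x-1)}+e^{-k^2(x-1)}\big)$, choosing $\epsilon$ so that $\epsilon f_k$ touches $a_k$ on each side of the zero and replacing $a_k$ by $\epsilon f_k$ in between. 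Your route is clean and modular, and the maximum--principle step is indeed classical (test with $(a_k-\bar a)_-$ and use $k^4>0$); the paper's route is a bit more economical in that one construction covers interior and endpoint simultaneously.
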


\begin{proof}
 We assume the contrary, i.e. that $a_k(x_0) = 0$ for some $x_0 \in (0,1]$ while $\sup_{x \in (0,x_0)} a_k(x) > 0$. Let $f_k(x) := e^{k^2(x-1)} + e^{-k^2(x-1)}$. 
 Since $a_k(x_0) = 0$ and $\sup_{x \in (0,x_0)} a_k(x) > 0$, we can find $\epsilon > 0$ such that $\epsilon f_k(x)$ intersects the graph of $a_k$ in $(0,x_0)$. Let $\alpha < x_0$ be the maximal point of intersection (i.e. $a_k(\alpha) = \epsilon f_k(\alpha)$, and $a_k < \epsilon f_k$ in $(\alpha,x_0)$). Since both $a_k$ and $f_k$ are continuous, such a point exists. In case $\epsilon f_k$ intersects $a_k$ in the interval $(x_0,1]$, we set $\beta \in (x_0,1]$ to be the minimal point of intersection. If $\epsilon f_k > a_k$ in $(x_0,1]$, we set $\beta = 1$. 

 We define a competitor $\bar u$, which is obtained from $u$ by replacing $a_k$ in the interval $(\alpha,\beta)$ with the function $\epsilon f_k$. Let $\bar a_k$ denote the Fourier coefficients of $\bar u$. Since $a_k < \epsilon f_k$ in $(\alpha,\beta)$, we have $a_k \le \bar a_k$, in particular $\sum_{k\in\Ip} \bar a_k^2(x) k^2 \ge \sum_{k\in\Ip} a_k^2(x) k^2 \ge 2x$. 

 Finally, we observe that $\S(\bar u) < \S(u)$. Indeed, since $f_k''(x) = k^4 f_k$ and $f_k'(1)=0$, function $\epsilon f_k$ is the unique absolute minimizer for the functional $\int_\alpha^\beta a_k'^2 + a_k^2k^4$ with a given boundary conditions $a_k(\alpha)$ (and possibly $a_k(\beta)$). Since $a_k(x_0) \neq \epsilon f_k(x_0)$ (i.e. they are not identical), we have $\S(\bar u) < \S(u)$. 

 By Lemma~\ref{lm:relaxed:constrain} we have $\min_{\sum a_k^2 k^2 = 2x} \S = \min_{\sum a_k^2 k^2 \ge 2x} \S$, and so we obtain a contradiction:
 \begin{equation*}
  \S(\bar u) < \S(u) = \min_{\sum a_k^2 k^2 = 2x} \S = \min_{\sum a_k^2 k^2 \ge 2x} \S \le \S(\bar u).
 \end{equation*}
\end{proof}

\subsection{Euler-Lagrange equation and Lagrange multiplier}

In this part we will first derive Euler-Lagrange equation for $a_k$ and show the existence of a {\it Lagrange multiplier} $\lambda$ (a non-negative measure on $(0,1]$). As a next step, we will obtain some preliminary estimates on $\lambda$. Later in the section we will introduce and study quantities $\mu_k$.

\begin{lemma}[Euler-Lagrange equation and the Lagrange multiplier]\label{lm:lambda}
 Let $u \in \A$ be an odd minimizer of $\S$. Then there exists $\lambda$, a non-negative measure on $(0,1]$, such that for all $k \in \Ip$ and any $\varphi_k \in \mathcal{D}((0,1])$ 
 \begin{equation}\label{EL}
  \int_0^1 a_k'(x) \varphi_k'(x) + a_k(x)\varphi_k(x) k^4 = \int_0^1 a_k(x) \varphi_k(x) k^2 \ud \lambda(x). 
 \end{equation}
\end{lemma}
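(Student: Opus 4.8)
The plan is to derive \eqref{EL} by the standard Lagrange-multiplier argument, but with care since the constraint $\sum_{k \in \Ip} a_k^2(x) k^2 = 2x$ is pointwise in $x$ and the natural multiplier is only a measure. First I would note that by Lemma~\ref{lm:relaxed:constrain} the minimizer $u$ also minimizes $\S$ over the \emph{relaxed} admissible set $\{a_k(0) = 0,\ \sum_k a_k^2(x)k^2 \ge 2x\}$, so I may freely perform one-sided variations that \emph{increase} some coefficient without worrying about the equality constraint. Fix $k \in \Ip$ and a test function $\varphi_k \in \mathcal{D}((0,1])$. The first step is to obtain the differential inequality: for variations of the form $a_k \mapsto a_k + t\varphi_k$ with $t \ge 0$, the relaxed constraint is preserved (since $a_k^2$ can only grow where we add mass appropriately — more precisely one splits $\varphi_k$ into parts supported where $a_k > 0$), so by minimality the one-sided derivative of $\S$ is $\ge 0$, giving
\begin{equation}\nonumber
 \int_0^1 a_k'(x)\varphi_k'(x) + a_k(x)\varphi_k(x)k^4 \,\dx \ge 0
\end{equation}
for all admissible one-sided test directions. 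The cleaner route, which I would actually follow, is to fix $k$ and work only on the relatively open set $\{a_k > 0\}$: there $a_k$ is smooth away from the contact set with the constraint, and on any interval where the constraint $\sum_j a_j^2 j^2 = 2x$ is \emph{not} active for this configuration — equivalently, on intervals interior to $\{\sum_j a_j^2 j^2 > 2x\}$, which by Remark~\ref{rmkae} and the proof of Lemma~\ref{lm:relaxed:constrain} we know is empty — so in fact I would phrase it as: two-sided variations $a_k \mapsto a_k \pm t\varphi_k$ supported in $\{a_k > 0\}$ keep all $a_j \ge 0$, keep $a_j(0) = 0$, and change $\sum_j a_j^2 j^2$; the increment in the constraint functional is controlled, and combining the $\ge 0$ inequality for $+t$ with that for $-t$ on the subset where we are strictly above the constraint yields equality there, while on the contact set the inequality becomes the sign condition that defines the measure.

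Concretely, the functional
\begin{equation}\nonumber
 \ell_k(\varphi_k) := \int_0^1 a_k'(x)\varphi_k'(x) + a_k(x)\varphi_k(x)k^4 \,\dx
\end{equation}
is linear in $\varphi_k \in \mathcal{D}((0,1])$, and the one-sided minimality computation shows $\ell_k(\varphi_k) \ge 0$ whenever $\varphi_k \ge 0$ with support in $\{a_k > 0\}$; moreover $\ell_k(\varphi_k) = 0$ for $\varphi_k$ supported in the (empty, by Lemma~\ref{lm:relaxed:constrain}, hence this case is vacuous) strict region. So $\ell_k$ is a non-negative distribution on the open set $\{a_k > 0\}$ relative to the weight $a_k$; by the Riesz representation theorem a non-negative distribution is a non-negative Radon measure, so there is a non-negative measure $\nu_k$ on $\{a_k > 0\} \cap (0,1]$ with $\ell_k(\varphi_k) = \int \varphi_k \,\ud\nu_k$. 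The second step is to check that $\nu_k = a_k k^2 \lambda$ for a \emph{single} measure $\lambda$ independent of $k$: this follows because the constraint couples all modes through the \emph{same} function $\sum_j a_j^2 j^2$, so the variation of the Lagrangian $\S(u) - \int \psi(x)(\sum_j a_j^2 j^2 - 2x)\,\dx$ in direction $\varphi_k$ produces $a_k k^2 \varphi_k$ against the multiplier; more rigorously, I would test with combined variations $a_k \mapsto a_k + t\varphi_k$, $a_{k'} \mapsto a_{k'} + s\psi_{k'}$ chosen so that the leading-order change in $\sum_j a_j^2 j^2$ cancels, i.e. $t a_k k^2 \varphi_k + s a_{k'} k'^2 \psi_{k'} = 0$ pointwise, deduce from two-sided minimality that $\frac{1}{a_k k^2}\ell_k(\varphi_k) = \frac{1}{a_{k'} k'^2}\ell_{k'}(\psi_{k'})$ whenever the test functions match in this way, and conclude the common value defines $\lambda$.

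The main obstacle I expect is the measure-theoretic bookkeeping near the contact set and near $x = 0$: the coefficients $a_k$ vanish at $x = 0$ and may vanish on closed subsets of $(0,1]$ (Corollary~\ref{cor:ak:positive} rules out the latter, which helps), and a priori the Lagrange multiplier could charge $x = 0$; restricting to test functions in $\mathcal{D}((0,1])$ (not $\mathcal{D}([0,1])$) is exactly what avoids this, and Corollary~\ref{cor:ak:positive} guarantees each nontrivial $a_k$ is strictly positive on $(0,1]$ so dividing by $a_k$ is legitimate on the whole interval. The other delicate point is justifying that the one-sided variation preserves the relaxed constraint and that the derivative of $\S$ along it is finite and equals $\ell_k(\varphi_k)$ — this uses $\S(u) < \infty$, the $W^{1,2}$ regularity of each $a_k$, and dominated convergence, all routine given Proposition~\ref{prop:minimizer}. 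Once $\lambda$ is produced and shown to be non-negative and $k$-independent, \eqref{EL} is just the statement $\ell_k(\varphi_k) = \int_0^1 a_k \varphi_k k^2 \,\ud\lambda$ for all $k \in \Ip$ and all $\varphi_k \in \mathcal{D}((0,1])$, which is what we wanted.
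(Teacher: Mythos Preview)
Your proposal is correct and follows essentially the same approach as the paper: one-sided variations $a_k \mapsto a_k + \delta\varphi$ with $\varphi \ge 0$ in the relaxed problem (Lemma~\ref{lm:relaxed:constrain}) give that $-a_k'' + a_k k^4$ is a non-negative distribution, hence a measure, and Corollary~\ref{cor:ak:positive} lets you divide by $a_k k^2$; then a coupled two-mode variation establishes $k$-independence of the resulting $\lambda$. The paper's version of the coupling step is slightly cleaner than yours---it takes $b_K = a_K + \delta\, a_l l^2 \tilde\varphi$, $b_l = a_l - \delta\, a_K K^2 \tilde\varphi$, so that $\sum_k b_k^2 k^2 = \sum_k a_k^2 k^2 + \delta^2(\cdots) \ge 2x$ holds \emph{exactly} for all $\delta \in \R$, which lets you differentiate two-sidedly without any discussion of a ``strict region'' (which, as you yourself note, is empty and hence a red herring in your write-up).
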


\begin{proof}
 Choose $K \in \Ip$ such that $a_K \not\equiv 0$. Then for any $\delta \ge 0$ and any test function $\varphi \in \mathcal{D}((0,1])$, $\varphi \ge 0$, we define
 \begin{equation}\nonumber
  b_k^{\delta}(x) := \begin{cases} a_k(x) & k \neq K\\ a_K(x) + \delta \varphi(x) & k = K, \end{cases}
 \end{equation}
 and by $v^\delta$ we denote the function with Fourier coefficients $b_k^\delta$. Since $\sum_{k\in\Ip} b_k^2(x) \ge \sum_{k \in \Ip} a_k^2(x) = x$,  Lemma~\ref{lm:relaxed:constrain} implies that $\S(v^\delta) \ge \S(u)$ for all $\delta \ge 0$, in particular
 \begin{equation}\nonumber
  0 \le \frac{1}{2} \left. \frac{d}{d\delta} \S(v^\delta)\right|_{\delta =0} = \int_0^1 a_K'(x) \varphi'(x) + a_K(x) \varphi(x)K^4 \dx. 
 \end{equation}
 Since $\varphi$ was arbitrary non-negative test function, we see that $-a_K''(x) + a_K(x)K^4$ is a non-negative distribution. Now we use that any non-negative distribution is a non-negative measure (see, e.g., \cite[Chapter 6.4]{strichartz:distributions}). Moreover, by Corollary~\ref{cor:ak:positive} $a_K > 0$ in $(0,1]$, and since $a_K \in W^{1,2}(0,1)$ is a continuous function in $[0,1]$, we have that $\inf a_K > 0$ on any compact subset of $(0,1]$. Hence we can find (locally finite) non-negative measure $\lambda$ in $(0,1]$ such that 
 \begin{equation}\label{ELK}
 \int_0^1 a_K'(x) \varphi'(x) + a_K(x)K^4 \varphi(x) \dx = \int_0^1 a_K(x)K^2 \varphi(x) \ud \lambda 
 \end{equation}
 for any $\varphi \in \mathcal{D}((0,1])$. We proved~\eqref{EL} for $k=K$. To extend~\eqref{EL} to all $l \in \Ip$, $a_l \not\equiv 0$, let us define for any $\delta \in \R$ and $\tilde \varphi \in \mathcal{D}((0,1])$ the following coefficients
 \begin{equation}\nonumber
  b_k^{\delta}(x) := \begin{cases} a_k(x) & k \neq K,l\\ a_K(x) + \delta a_l(x)l^2 \tilde \varphi(x) & k = K \\ a_l(x) - \delta a_K(x)K^2 \tilde \varphi(x) & k=l. \end{cases}
 \end{equation}
 Then we see that 
 \begin{equation}
  \sum_{k\in\Ip} b_k^2(x)k^2 = \sum_{k\in\Ip} a_k^2(x)k^2 + \delta^2\left( a_l^2(x)l^4 + a_K^2(x)K^4 \right) \tilde \varphi^2(x) \ge x, 
 \end{equation}
 and similarly as before we obtain (but this time using $\delta \in \R$)
 \begin{equation}\label{eq59}
  \int_0^1 a_K' \left( a_l \tilde \varphi \right)'l^2 + a_K a_l \tilde \varphi l^2 - a_l' \left( a_K \tilde \varphi  \right)'K^2 + a_l a_K \tilde \varphi K^2 = 0.
 \end{equation}
 We observe that by density argument~\eqref{ELK} holds for all $\varphi \in W^{1,2}(0,1)$ with compact support in $(0,1]$, in particular for $\varphi(x) := a_l(x) l^2 \tilde \varphi(x)$. Subtracting~\eqref{ELK} with $\varphi(x) = a_l(x) l^2 \tilde \varphi(x)$ from~\eqref{eq59} gives (after some algebraic manipulation)
 \begin{equation}\nonumber
  \int_0^1 a_l'(a_K \tilde \varphi)' + a_l l^4 (a_K \tilde \varphi) = \int_0^1 a_l l^2 (a_K \tilde \varphi) \ud \lambda.
 \end{equation}
 By observing that the previous relation holds for larger class of test functions $\tilde \varphi$ we obtain~\eqref{EL} for $k=l$. 
\end{proof}

Since the frequencies $a_k$, which are defined as elements of $W^{1,2}(0,1)$, satisfy~\eqref{EL}, the first derivative $a_k'(x)$ of a particular, say left-continuous representative $a_k$, has a well-defined value at every point $x \in (0,1]$. Moreover, as a consequence of~\eqref{EL} we obtain
\begin{cor}
Let $u \in \A$ be an odd minimizer of $\S$. Then there exists a number $\mu = \lambda(\{1\}) \ge 0$ such that for all $k \in \Ip$ 
 \begin{equation}\label{mu}
  a_k'(1) = \mu a_k(1) k^2,
 \end{equation} 
and for any $[\alpha,\beta] \subset (0,1]$ we have
\begin{equation}\label{ELint}
 a_k'(\alpha) - a_k'(\beta) = \int_{[\alpha,\beta)} a_kk^2 \ud \lambda - \int_\alpha^\beta a_k k^4 \dx. 
\end{equation}
\end{cor}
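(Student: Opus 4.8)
The plan is to reinterpret the weak identity \eqref{EL} as an identification of the distributional derivative of $a_k'$. Fix $k \in \Ip$ (if $a_k \equiv 0$ both \eqref{mu} and \eqref{ELint} are trivial). Since $a_k \in W^{1,2}(0,1)$ is continuous on $[0,1]$ and $\lambda$ is a non-negative measure that is finite on compact subsets of $(0,1]$, the expression $\nu_k := a_k k^4 \dx - a_k k^2 \ud\lambda$ defines a locally finite signed measure on $(0,1]$. Testing \eqref{EL} against $\varphi_k \in \mathcal{D}((0,1))$ (so also vanishing near $x=1$) shows that the distributional derivative of the $L^2$-function $a_k'$ equals $\nu_k$ on $(0,1)$; hence $a_k'$ has a representative of locally bounded variation, and from now on $a_k'$ denotes its left-continuous representative, so that $a_k'(1) = \lim_{x\to 1^-} a_k'(x)$. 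The fundamental theorem of calculus for $BV$ functions then gives $a_k'(\beta) - a_k'(\alpha) = \nu_k([\alpha,\beta)) = \int_\alpha^\beta a_k k^4 \dx - \int_{[\alpha,\beta)} a_k k^2 \ud\lambda$ for every $[\alpha,\beta] \subset (0,1)$, which is \eqref{ELint} after rearranging; the case $\beta = 1$ follows by letting $\beta \uparrow 1$, noting that the half-open interval $[\alpha,1)$ does not see a possible atom of $\lambda$ at $1$.

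To obtain the natural boundary condition \eqref{mu}, set $\mu := \lambda(\{1\}) \ge 0$ (finite since $\{1\}$ is compact in $(0,1]$) and, using \eqref{ELint} with $\beta=1$, write $a_k'(x) = a_k'(1) + G_k(x)$ on $(0,1]$, where $G_k(x) := \int_{[x,1)} a_k k^2 \ud\lambda - \int_x^1 a_k k^4 \ud t$; thus $G_k(1) = 0$, $G_k$ is continuous at $1$, and $\ud G_k = a_k k^4 \dx - a_k k^2 \ud\lambda$ on $(0,1)$ (crucially, $G_k$ ignores the atom of $\lambda$ at $x=1$). Now pick any $\varphi \in \mathcal{D}((0,1])$ with $\varphi(1) = 1$ — such a test function is admissible in \eqref{EL}, since $\mathcal{D}((0,1])$ does not force vanishing at the right endpoint. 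Inserting $a_k' = a_k'(1) + G_k$ into \eqref{EL} and using $\int_0^1 \varphi' \dx = \varphi(1) - \varphi(0) = 1$ yields
\begin{equation}\nonumber
 a_k'(1) + \int_0^1 G_k \varphi' \dx + \int_0^1 a_k \varphi k^4 \dx = \int_0^1 a_k \varphi k^2 \ud\lambda .
\end{equation}
Integrating by parts (the boundary contributions vanish because $G_k(1) = 0$ and $\varphi$ vanishes at the left edge of its support) gives $\int_0^1 G_k \varphi' \dx = \int_0^1 \varphi a_k k^2 \ud\lambda - a_k(1) k^2 \mu - \int_0^1 \varphi a_k k^4 \dx$; substituting this back and cancelling the two pairs of matching integrals leaves $a_k'(1) - a_k(1) k^2 \mu = 0$, which is \eqref{mu}.

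The routine ingredients are the $BV$ fundamental theorem of calculus and the Lebesgue–Stieltjes integration by parts. The only delicate point — and the main thing to get right — is the bookkeeping at $x = 1$: one must exploit that $\mathcal{D}((0,1])$ contains test functions with $\varphi(1) \ne 0$, and keep track of the fact that the atom $\lambda(\{1\})$ is precisely what distinguishes the half-open interval $[\alpha,1)$ appearing in \eqref{ELint} from the endpoint condition \eqref{mu}; the atom contributes to the right-hand side of \eqref{EL} but not to $\ud G_k$, and this mismatch is exactly the term $\mu a_k(1) k^2$.
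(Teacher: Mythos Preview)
Your proof is correct and is essentially the natural way to flesh out what the paper leaves implicit: the paper does not give a proof of this corollary at all, merely remarking that it follows ``as a consequence of \eqref{EL}'' once one chooses the left-continuous representative of $a_k'$. Your argument---identifying $(a_k')'$ with the signed measure $a_k k^4\,\mathrm{d}x - a_k k^2\,\mathrm{d}\lambda$ via \eqref{EL}, invoking the fundamental theorem of calculus for $BV$ functions to get \eqref{ELint}, and then extracting \eqref{mu} by testing with $\varphi \in \mathcal{D}((0,1])$ satisfying $\varphi(1)=1$ and carefully separating the atom $\lambda(\{1\})$---is exactly the computation one has to do, and your bookkeeping at $x=1$ is right.
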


In the following lemma we show that $\lambda \ge 1/x$ in some sense, which will be important later in the proof of the exponential decay of $a_k$:

\begin{lemma}[Estimates on the Lagrange multiplier]\label{lm:lambda:lowerbound}
 Let $u \in \A$ be an odd minimizer of $\S$, %which satisfies the constrain~\eqref{def:constrain}, 
and let $\lambda$ be the Lagrange multiplier obtained in Lemma~\ref{lm:lambda}. Then 
 \begin{equation}\label{lambda:ge}
  \lambda((\alpha,\beta)) \ge \int_\alpha^\beta \frac{1}{x} \dx
 \end{equation}
 for any $[\alpha,\beta] \subset (0,1)$ and 
 \begin{equation}\label{int:lambdax}
  \int_0^1 2x \ud \lambda = \S(u).
 \end{equation}
%  Moreover, Lagrange multiplier $\mu$ satisfies $\mu > 0$. 
\end{lemma}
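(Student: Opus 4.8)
The statement splits naturally into two claims: a pointwise-type lower bound \eqref{lambda:ge} on the Lagrange multiplier, and the identity \eqref{int:lambdax}. For \eqref{int:lambdax} the idea is to test the Euler--Lagrange equation \eqref{EL} with $\varphi_k = a_k$ itself (justified by the density remark following Lemma~\ref{lm:lambda}, since $a_k \in W^{1,2}(0,1)$ vanishes at $x=0$ and the boundary term at $x=1$ is accounted for by $\mu$ via \eqref{mu}), sum over $k \in \Ip$, and use the constraint \eqref{constr:f} in the form $\sum_{k\in\Ip} a_k^2(x) k^2 = 2x$. Summing the left-hand side of \eqref{EL} over $k$ gives exactly $\int_0^1 \sum_k a_k'^2 + a_k^2 k^4 \dx = \S(u)$ (after moving the $x=1$ contribution into $\mu\lambda(\{1\})$ consistently), while the right-hand side becomes $\int_0^1 \big(\sum_k a_k^2 k^2\big) \ud\lambda = \int_0^1 2x \ud\lambda$. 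One must be a little careful with the interchange of sum and integral and with the boundary point $x=1$; using \eqref{ELint} on $[\alpha,1]$ and then letting $\alpha \to 0$ (the energy being finite controls the limit) is the clean way to handle endpoints.

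For \eqref{lambda:ge} the strategy is a localized perturbation argument exploiting that the constraint is an \emph{equality}, so both increasing and (slightly) decreasing $\sum_k a_k^2 k^2$ must cost energy, but here we want the direction that reveals $\lambda$ is large. Fix $[\alpha,\beta]\subset(0,1)$ and a nonnegative $\psi \in \mathcal{D}((\alpha,\beta))$. Multiplying each $a_k$ by $(1+\delta\psi(x))$ changes the constraint quantity from $2x$ to $2x(1+\delta\psi)^2 \ge 2x$, so by Lemma~\ref{lm:relaxed:constrain} the energy cannot decrease; differentiating $\tfrac{1}{2}\tfrac{d}{d\delta}\S$ at $\delta=0$ and using \eqref{EL} with $\varphi_k = a_k\psi$ yields
\begin{equation}\nonumber
 0 \le \int_0^1 \sum_{k\in\Ip}\Big( a_k' (a_k\psi)' + a_k^2 k^4 \psi \Big)\dx = \int_0^1 \Big(\sum_{k\in\Ip} a_k^2 k^2\Big)\psi \ud\lambda - \int_0^1 \sum_{k\in\Ip} a_k'^2 \psi\, \dx\, ?
\end{equation}
-- the precise bookkeeping is: test \eqref{EL} with $\varphi_k = a_k\psi$, sum over $k$, and use $\sum_k a_k^2 k^2 = 2x$; one gets an identity of the form $\int \big(\sum_k a_k'^2 - a_k^2 k^4\big)\psi + \tfrac12 (\sum_k a_k^2)'\psi' \,\dx = \int 2x \psi \ud\lambda$. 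To extract \eqref{lambda:ge} one instead differentiates the energy of the rescaled competitor directly: the competitor $\tilde a_k(x) := a_k(x)(1+\delta\psi(x))$ has $\sum_k \tilde a_k^2 k^2 = 2x(1+\delta\psi)^2$, which still exceeds $2x$, and
\begin{equation}\nonumber
 \frac{d}{d\delta}\Big|_{\delta=0}\S(\tilde u) = 2\int_0^1 \sum_{k\in\Ip}\big( a_k' (a_k\psi)' + a_k^2 k^4 \psi\big)\dx = 2\int_0^1 \Big(\sum_{k\in\Ip} a_k^2 k^2\Big)\psi\ud\lambda = 4\int_0^1 x\psi\ud\lambda \ge 0.
\end{equation}
That only gives $\lambda \ge 0$, which we already know. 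To get the sharp $1/x$ one must also use a competitor that \emph{lowers} the constraint and pays for the violation using Lemma~\ref{lm:relaxed:constrain}'s relaxed formulation is not available in that direction; instead the right move is to compare with the explicit one-mode building blocks from Lemma~\ref{lm:construction} (which saturate $\sum_k a_k^2 k^2 = 2x$ with $O(1)$ energy density) localized on $[\alpha,\beta]$: replacing the slice $u|_{[\alpha,\beta]}$ of the minimizer by a rescaled building block of matching ``budget'' cannot decrease the energy, and computing the resulting inequality with \eqref{ELint} forces $\lambda((\alpha,\beta))$ to absorb at least $\int_\alpha^\beta x^{-1}\dx$.

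\textbf{Main obstacle.} The delicate point is the second inequality, \eqref{lambda:ge}: getting the \emph{exact} factor $1/x$ (rather than just $\lambda \ge 0$ or $\lambda \ge c/x$ with a nonsharp constant) requires choosing the competitor so that the leading-order energy change is governed precisely by the constraint profile $2x$ and not by slack terms. I expect the cleanest route is: test \eqref{EL} with $\varphi_k(x) = a_k(x)\eta(x)$ for $\eta \in \mathcal{D}((0,1))$, $\eta \ge 0$, sum over $k$, use $\sum_k a_k^2 k^2 = 2x$ on the right and integrate by parts on the left to isolate $\int 2x\,\eta\ud\lambda$; then a second test of \eqref{EL} with $\varphi_k = a_k \zeta$ for a different profile, combined with the energy lower bound $\sum_k a_k'^2 \ge$ (something controlled by the constraint via Cauchy--Schwarz, namely $\big(\sum_k a_k a_k'\big)^2 \le \big(\sum_k a_k^2\big)\big(\sum_k a_k'^2\big)$ and $\sum_k a_k a_k' k^2 = \tfrac{d}{dx}(x) = 1$), forces $\sum_k a_k'^2 k^2 \cdot (\sum_k a_k^2) \ge 1$ hence a pointwise bound that, when integrated against a suitable test function, yields $\int 2x\,\eta \ud\lambda \ge \int 2\eta \dx$, i.e. \eqref{lambda:ge}. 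Making this Cauchy--Schwarz step interact correctly with the measure $\lambda$ on its singular part is where the care is needed; one handles it by first proving the inequality for $\lambda$ restricted to compact subsets of $(0,1)$ and an arbitrary nonnegative $C^\infty_c$ test function, then passing to characteristic functions of intervals by monotone approximation.
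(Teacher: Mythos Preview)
Your treatment of \eqref{int:lambdax} is essentially the paper's: test \eqref{EL} with $\varphi_k=a_k$, sum over $k$, and use the constraint $\sum_k a_k^2 k^2 = 2x$. That part is fine.

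The gap is in \eqref{lambda:ge}. Your final proposed route is to test \eqref{EL} with $\varphi_k = a_k\eta$ and then invoke Cauchy--Schwarz on the derivative of the constraint. There are two problems. First, with this test function the identity reads
\[
\int_0^1 2x\,\eta\,\ud\lambda \;=\; \int_0^1 \Big(\sum_{k} a_k'^2 + a_k^2 k^4\Big)\eta\,\dx \;+\; \frac12\int_0^1 \Big(\sum_k a_k^2\Big)'\eta'\,\dx,
\]
and the cross term $\tfrac12\int(\sum a_k^2)'\eta'$ does \emph{not} vanish, since $\sum_k a_k^2$ is not constant (unlike $\sum_k a_k^2 k^2$). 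It has no sign, so even after showing the pointwise bound $\sum_k a_k'^2 + a_k^2 k^4 \ge 2$ (which you could get from Young and $\sum_k a_k a_k' k^2 =1$), you cannot conclude $\int 2x\,\eta\,\ud\lambda \ge \int 2\eta\,\dx$. Second, the Cauchy--Schwarz inequality you wrote is garbled: the quantities $\sum a_k a_k'$, $\sum a_k^2$, $\sum a_k'^2 k^2$ you list do not fit together, and the conclusion ``$\sum a_k'^2 k^2 \cdot \sum a_k^2 \ge 1$'' is neither what Cauchy--Schwarz gives here nor what is needed.

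The fix is to change the test function to $\varphi_k = a_k k^2 \Psi$. Then the cross term becomes $\int_0^1\big(\sum_k a_k a_k' k^2\big)\Psi'\,\dx = \int_0^1 \Psi'\,\dx = 0$ (precisely because differentiating the constraint gives $\sum_k a_k a_k' k^2 = 1$), and the identity reads
\[
\int_0^1 \Big(\sum_k a_k'^2 k^2 + a_k^2 k^6\Big)\Psi\,\dx \;=\; \int_0^1 B(x)\,\Psi\,\ud\lambda, \qquad B(x):=\sum_k a_k^2(x)k^4.
\]
Now two applications of Cauchy--Schwarz (on $\sum a_k^2 k^4$ with weights $k^2$ and $k^6$, and on $1=\sum a_k a_k' k^2$ with weights $k^2$ and $k^2$), combined by Young's inequality, give the pointwise estimate
\[
\sum_k a_k'^2 k^2 + a_k^2 k^6 \;\ge\; \frac{1}{x}\, B(x),
\]
so that $\int B\,\Psi\,\ud\lambda \ge \int x^{-1} B\,\Psi\,\dx$. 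Since $B>0$ on $(0,1]$ and $B\in L^1$, Radon--Nikod\'ym then yields $\lambda((\alpha,\beta))\ge \int_\alpha^\beta x^{-1}\dx$. The competitor idea via Lemma~\ref{lm:construction} that you sketched earlier is unnecessary here and would not produce the sharp constant without essentially redoing this computation.
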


\begin{proof}
 For $K \ge 0$ and $\Psi \in \mathcal{D}(0,1)$ we define 
\begin{equation}\nonumber
  \varphi_k(x) := \begin{cases} a_k(x)k^2 \Psi(x) & k \in \Ip \cap [0,K]\\ 0 & \textrm{otherwise}. \end{cases}
 \end{equation}
 Then we sum~\eqref{EL} for $k \in \Ip \cap [0,K]$ to obtain
 \begin{equation}\label{64}
  \int_0^1 \sum_{k \in \Ip \cap [0,K]} (a_k'^2 k^2 + a_k^2 k^6) \Psi + \sum_{k \in \Ip \cap [0,K]} a_k'a_k k^2 \Psi' \dx = \int_0^1 \sum_{k \in \Ip \cap [0,K]} a_k^2k^4 \Psi \ud \lambda. 
 \end{equation}
 Now we observe that by differentiating the constraint~\eqref{constr:f}\footnote{The differentiation is justified by the fact that $\sum_{k\in\Ip} a_k'(\cdot)a_k(\cdot)k^2 \le \frac{1}{2} \sum_{k\in\Ip} a_k'^2(\cdot) + a_k^2(\cdot)k^4$, where the right-hand side is in $L^1(0,1)$.} we obtain $\sum_{k\in\Ip} a_k'a_kk^2 = 1$, which can be integrated against $\Psi'$ to show
 \begin{equation}\nonumber
  \int_0^1 \sum_{k\in\Ip} a_k'a_k k^2 \Psi' \dx = 0.
 \end{equation}
 Hence taking $K \to \infty$ in~\eqref{64}, and using monotone convergence theorem together with the previous relation imply
 \begin{equation}\label{66}
  \int_0^1 \sum_{k \in \Ip} (a_k'^2 k^2 + a_k^2 k^6) \Psi \dx = \int_0^1 \sum_{k \in \Ip} a_k^2k^4 \Psi \ud \lambda.  
 \end{equation}

%  We multiply~(\ref{EL}.1) by $a_k k^2$ to obtain $a_k''(x) a_k(x) k^2 = a_k^2(x)k^6 - \lambda(x) a_k^2(x) k^4$. Then summing over $k \in\Ip$ gives 
%  \begin{equation*}
%   \sum_{k\in\Ip} a_k''(x) a_k(x) k^2 = a_k^2(x)k^6 - \lambda a_k^2(x) k^4. 
%  \end{equation*}
%  We differentiate~\eqref{constr:f} twice to get $\sum_{k\in\Ip} a_k''a_k k^2 + a_k'^2(x) k^2 = 0$. Plugging this identity into the above relation implies
%  \begin{equation}\label{lambda:ub:identity}
%   \lambda(x) \sum_{k\in\Ip} a_k^2(x) k^4 = \sum_{k\in\Ip} a_k^2(x)k^6 + a_k'^2(x) k^2. 
%  \end{equation}
  We now estimate from below both terms on the left-hand side of the above equation. By the Cauchy-Schwarz inequality
 \begin{equation}\nonumber
  \Bigg( \sum_{k\in\Ip} a_k^2 k^4 \Bigg)^2 \le \Bigg( \sum_{k\in\Ip} a_k^2 k^2 \Bigg) \Bigg( \sum_{k\in\Ip} a_k^2 k^6 \Bigg) \overset{\eqref{constr:f}}{=} 2x \sum_{k\in\Ip} a_k^2 k^6.
 \end{equation}
 For the first term in~\eqref{66} we differentiate the constraint~\eqref{constr:f} and use the Cauchy-Schwarz inequality
 \begin{multline}\nonumber
  2 = (2x)' = \Bigg( \sum_{k\in\Ip} a_k^2 k^2 \Bigg)' = 2 \sum_{k\in\Ip} a'_k a_k k^2 
\\ \le 2\Bigg( \sum_{k\in\Ip} a_k^2 k^2 \Bigg)^{1/2} \Bigg( \sum_{k\in\Ip} a_k'^2 k^2 \Bigg)^{1/2} = 2(2x)^{1/2} \Bigg( \sum_{k\in\Ip} a_k'^2 k^2 \Bigg)^{1/2}.
 \end{multline}
 The above estimates together with the Young's inequality imply
 \begin{equation}\nonumber
  \sum_{k \in \Ip} a_k'^2(x) k^2 + a_k^2(x) k^6 \ge \frac{1}{x} \sum_{k \in \Ip} a_k^2(x)k^4.
 \end{equation}
 Using this in~\eqref{66} gives
 \begin{equation}\nonumber
  \int_0^1 B(x) \Psi(x) \ud \lambda \ge \int_0^1 \frac{1}{x} B(x) \Psi(x) \dx,
 \end{equation}
%   \begin{equation}\nonumber
%   2 \lambda(x) B(x) \ge \frac{B^2(x)}{x} + \frac{1}{x} \ge \frac{2B(x)}{x},
%  \end{equation}
 where we used notation 
 \begin{equation}\label{def:B}
  B(x) := \sum_{k\in\Ip} a_k^2(x) k^4.  
 \end{equation}
Since $B(x) > 0$ for $x \in (0,1]$ (otherwise~\eqref{constr:f} would be false) and $B \in L^1(0,1)$, by the Radon-Nikod\'ym Theorem we have for $[\alpha,\beta] \subset (0,1)$
 \begin{equation}\nonumber
  \lambda((\alpha,\beta)) \ge \int_\alpha^\beta \frac{\dx}{x}.  
 \end{equation}

To prove~\eqref{int:lambdax}, we test~\eqref{EL} with $\varphi_k := a_k$ and sum in $k$ to show that 
\begin{equation}\nonumber
 \S(u) = \int_0^1 \sum_{k \in \Ip} a_k'^2 + a_k^2k^4 \dx = \int_0^1 \sum_{k \in \Ip} a_k^2 k^2 \ud \lambda \overset{\eqref{constr:f}}{=} \int_0^1 2x \ud \lambda.
\end{equation}

%  To show that $\mu > 0$, we multiply~(\ref{EL}.2) by $a_k(1)k^2$ and sum over $k\in\Ip$ to obtain
%  \begin{equation}\nonumber
%   \mu B(1) = \sum_{k\in\Ip} a_k(1) a_k'(1) k^2 = \frac{1}{2} \left( \sum_{k\in\Ip} a_k^2(1)k^2 \right)' = 1,
%  \end{equation}
%  which shows that $\mu > 0$. 
% 
%  To prove~\eqref{int:lambdax}, we multiply~(\ref{EL}.1) by $a_k$ to get $a_k''(x) a_k(x) = a_k^2(x) k^4 - \lambda(x) a_k^2(x) k^2$. We sum this over $k \in\Ip$, integrate in $x$, and use integration by parts to get
%  \begin{equation}\nonumber
%   \int_0^1 \lambda(x) \sum_{k\in\Ip} a_k^2(x) k^2 \dx + \sum_{k\in\Ip} a_k'(1) a_k(1) = \int_0^1 \sum_{k\in\Ip} a_k'^2(x) + a_k^2(x)k^4 = \S(u). 
%  \end{equation}
%  Now \eqref{int:lambdax} follows from~(\ref{EL}.2) and~\eqref{constr:f}. 
\end{proof}

The Euler-Lagrange equation~\eqref{EL} consists of a linear homogeneous second order ODE plus the boundary conditions which are also homogeneous. In particular, they define $a_k$ only up to a multiplication by a constant. To remove this degree of freedom and to replace a linear second order ODE by a nonlinear first order ODE, we introduce the following quantity (for all $k \in\Ip$ such that $a_k \not \equiv 0$):
\begin{equation}\label{eq:mu}
 \mu_k(x) := \frac{a'_k(x)}{k^2 a_k(x)}.
\end{equation}
A simple computation shows that~\eqref{EL} and~\eqref{mu} translates into
\begin{equation}\label{ELmu}
\begin{aligned}
 \mu_k'(x) &= k^2(1-\mu_k^2(x)) - \lambda\\
 \mu_k(1) &=\mu = \lambda(\{1\}),
\end{aligned} 
\end{equation}
where the first equality holds in the sense of distributions. Since $\mu_k'$ is a measure, we can consider a particular (left-continuous) representative of $\mu_k$ which is defined for every $x \in (0,1]$ by
\begin{equation}\label{mu:repre}
 \mu_k(x) = \lambda([x,1]) - \int_x^1 k^2(1-\mu_k(x')) \ud x'.
\end{equation}
This is consistent with our previous choice of a left-continuous representative for $a_k$. %which we did above. Then using~\eqref{eq:mu} we also obtain a representative of $a_k'$ which is defined everywhere in $(0,1]$. 

%Moreover, from the definition of $\mu_k$ it is clear that $\mu_k(x) \to \infty$ as $x \to 0+$. 

In the following we will obtain an upper bound for the Lagrange multiplier $\lambda$. As a first step, we derive some estimates on $\mu_k$, which in turn will imply the exponential decay of $a_k$ and finally the desired estimate on $\lambda$. 

\begin{cor}\label{cor:muk:1}
 Let $u \in \A$ be an odd minimizer of $\S$, and let $k \in\Ip$ be such that $a_k \not \equiv 0$. Then 
 \begin{equation}\nonumber
  -1 < \mu_k(x) < 1, \quad \forall x \in [1/k^2,1].
 \end{equation}
\end{cor}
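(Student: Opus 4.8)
The plan is to control $\mu_k$ by means of two auxiliary quantities, $a_k'+k^2a_k$ and $a_k'-k^2a_k$, which after multiplication by a suitable exponential turn out to be monotone as a direct consequence of the Euler--Lagrange equation. Since $a_k\not\equiv 0$, Corollary~\ref{cor:ak:positive} gives $a_k(x)>0$ for $x\in(0,1]$, so $\mu_k=a_k'/(k^2a_k)$ is well defined there; we may also assume $1/k^2\le 1$, since otherwise $[1/k^2,1]=\emptyset$ and there is nothing to prove. I recall from Lemma~\ref{lm:lambda} and from \eqref{ELint},\eqref{mu} that $\mathrm{d}(a_k')=a_kk^4\,\mathrm{d}x-a_kk^2\,\mathrm{d}\lambda$ as measures on $(0,1]$, with $\lambda\ge 0$, and that $a_k'(1)=\mu\,a_k(1)k^2$ with $\mu=\lambda(\{1\})\ge 0$; throughout I work with the left-continuous representatives of $a_k$ and $a_k'$.

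For the lower bound I would put $\chi:=a_k'+k^2a_k=k^2a_k(\mu_k+1)$. Using the measure identity for $\mathrm{d}(a_k')$ one computes
\[
 \mathrm{d}\!\left(e^{-k^2x}\chi\right)=-k^2e^{-k^2x}a_k\,\mathrm{d}\lambda\le 0,
\]
so $x\mapsto e^{-k^2x}\chi(x)$ is non-increasing on $(0,1]$. Since $\chi(1)=k^2a_k(1)(1+\mu)>0$, this gives $\chi(x)\ge e^{k^2(1-x)}\chi(1)>0$ for every $x\in(0,1]$, i.e.\ $\mu_k>-1$ on all of $(0,1]$ (here only $\lambda\ge 0$ and the boundary condition are used).

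For the upper bound I would first establish $\mu_k(1/k^2)<1$, which is the one place Lemma~\ref{lm:lambda:lowerbound} is needed: on $(0,1/k^2)$ one has $\lambda\ge \mathrm{d}x/x$ and $1/x>k^2$, hence $\mathrm{d}(a_k')\le a_kk^2(k^2-1/x)\,\mathrm{d}x$, a strictly negative measure, so $a_k$ is strictly concave on $(0,1/k^2)$. Since $a_k(0)=0$, the identity $a_k'(x)x-a_k(x)=\int_0^x\!\left(a_k'(x)-a_k'(t)\right)\mathrm{d}t$ together with the strict monotonicity of $a_k'$ shows $a_k'(x)x-a_k(x)<0$ on $(0,1/k^2)$; moreover this quantity has derivative $a_k''(x)\,x<0$, so it is strictly decreasing and its left limit at $1/k^2$ is still negative, giving $a_k'(1/k^2)<k^2a_k(1/k^2)$, i.e.\ $\mu_k(1/k^2)<1$. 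To conclude I would repeat the first step with $\psi:=a_k'-k^2a_k=k^2a_k(\mu_k-1)$: now $\mathrm{d}(e^{k^2x}\psi)=-k^2e^{k^2x}a_k\,\mathrm{d}\lambda\le 0$, so $e^{k^2x}\psi$ is non-increasing on $(0,1]$, and since $\psi(1/k^2)<0$ we get $e^{k^2x}\psi(x)\le e\,\psi(1/k^2)<0$ for every $x\in[1/k^2,1]$, i.e.\ $\mu_k(x)<1$ there.

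I expect the main obstacle to be the concavity step, and more precisely upgrading it to the \emph{strict} inequality $\mu_k(1/k^2)<1$ at the endpoint: one must treat $\lambda$ as a genuine (possibly atomic) measure, stay consistently with left-continuous representatives, and invoke the measure-theoretic form of ``a function with strictly negative derivative is strictly decreasing''. Once $\mu_k(1/k^2)<1$ is in hand, the propagation to $[1/k^2,1]$ and the lower bound on $(0,1]$ are immediate one-line computations with the explicit integrating factors $e^{\mp k^2x}$.
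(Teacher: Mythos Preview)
Your argument is correct. The step showing $\mu_k(1/k^2)<1$ via strict concavity of $a_k$ on $(0,1/k^2)$ is essentially the paper's Claim~1. Where you differ is in the other two parts: the paper works with the Riccati equation \eqref{ELmu} for $\mu_k$ and argues by contradiction (take the maximal $x_0$ with $\mu_k(x_0)\le -1$, or the first $x_0$ with $\mu_k(x_0)=1$, and use $\mu_k'=k^2(1-\mu_k^2)-\lambda$ together with Lemma~\ref{lm:lambda:lowerbound} to force the wrong sign on $\mu_k'$). You instead go back to the linear equation for $a_k$ and introduce the integrating factors $e^{\mp k^2x}$, which turn $a_k'\pm k^2a_k$ into monotone quantities by a one-line computation; the signs are then read off directly from the boundary value at $x=1$ (for the lower bound) and from $\psi(1/k^2)<0$ (for the upper bound). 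This is cleaner in that it sidesteps the somewhat informal ``$\mu_k'(x_0)<0$'' language for a measure-valued derivative, and it shows that for $\mu_k>-1$ on $(0,1]$ and for the propagation $\mu_k<1$ on $[1/k^2,1]$ only $\lambda\ge 0$ and the endpoint condition $\mu_k(1)=\lambda(\{1\})\ge 0$ are needed, whereas the paper invokes the sharper bound $\lambda\ge 1/x$ also there. The paper's route, on the other hand, stays entirely at the level of $\mu_k$ and connects more directly with the later analysis (Lemmas~\ref{lm:muk:monotone}--\ref{lm:muk:decay}), which is phrased in terms of $\mu_k$ and its first-order equation.
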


\begin{proof}
 It is enough to prove the following three claims.
 \begin{itemize}
  \item[1)] $\mu_k(k^{-2}) < 1$,
  \item[2)] $\mu_k(x) > -1$ for any $x \in (0,1]$,
  \item[3)] if $\mu_k(x_0) < 1$, then $\mu_k<1$ in $[x_0,1]$. 
 \end{itemize}

{\bf Claim 1:} We first observe that $a_k'$ is strictly decreasing function in $(0,k^{-2})$. Indeed, let $[\alpha,\beta] \subset (0,k^{-2})$. Then~\eqref{ELint} and~\eqref{lambda:ge} imply
% and let $\varphi_n$ be an increasing sequence of smooth functions which converge to $\chi_{(\alpha,\beta)}$. Then testing~\eqref{EL} with $\varphi_n$, taking the limit in $n$, and using~\eqref{lambda:ge} gives
\begin{equation}\label{75}
 a_k'(\alpha) - a_k'(\beta) + \int_\alpha^\beta a_k k^4 \dx = \int_\alpha^\beta a_k k^2 \ud \lambda \ge \int_\alpha^\beta a_k k^2 \frac{\dx}{x} \overset{k^2 < x^{-1}}{>} \int_\alpha^\beta a_k k^4 \dx.
\end{equation}
%By~\eqref{lambda:ge}, $a_k$ satisfies differential inequality $a_k''(x) \ge a_k(x) k^2 (k^2 - 1/x)$. Since for $x \in [0,1/k^2)$ one has $k^2 -1/x < 0$, we see that $a_k'$ is (strictly) decreasing in $(0,1/k^2)$. 

Then $a_k(0)=0$ and $a_k(x) > 0$ in $(0,1]$ imply
 \begin{equation}\nonumber 	
  a_k(k^{-2}) = \int_0^{1/k^2} a_k'(x) \dx > \int_0^{1/k^2} a_k'(1/k^2) \dx = k^{-2} a_k'(k^{-2}) = \mu_k(k^{-2}) a_k(k^{-2}),
 \end{equation}
 which shows that $\mu_k(k^{-2}) < 1$. 

{\bf Claim 2:}
 Let us assume $\mu_k(x) \le -1$ for some $x \in (0,1]$, i.e. $M := \left\{ x \in (0,1] : \mu_k(x) \le -1\right\}$ is not empty. %We prove that this would imply $\mu_k'(x) < 0$, and so $\mu_k < -1$ in the whole $(x,1]$,  which would contradict $\mu_k(1) = \mu \ge 0$. 
 Then it follows from \eqref{mu:repre} (in particular the left-continuity of $\mu_k$) and from $\mu_k(1) = \mu = \lambda(\{1\}) \ge 0$ that we can find $x_0 \in (0,1)$, the maximal element of $M$. %is a bounded closed subset of $(0,1)$. Hence we can define $x_0 := \max M < 1$. 
 Using~\eqref{ELmu} and Lemma~\ref{lm:lambda:lowerbound}, we see that $\mu_k'(x_0) < 0$, which means that $\mu_k \le -1$ in some right neighborhood of $x_0$, a contradiction with the definition of $x_0$. 

{\bf Claim 3:} If $x_0$ is such that $\mu_k(x_0) = 1$, then~\eqref{ELmu} and Lemma~\ref{lm:lambda:lowerbound} imply that $\mu_k'(x_0) < 0$, and so $\mu_k > 1$ in a left neighborhood of $x_0$. This part then follows directly from this observation.
 \end{proof}

%We see that for $x \in [1/k^2,1]$ the value of $\mu_k(x)$ is between $-1$ and $1$. A simple calculation reveals that
Definition of $\mu_k$ implies that
\begin{equation}\label{66-}
 a_k(x_1) = a_k(x_0) e^{k^2 \int_{x_0}^{x_1} \mu_k(x) \dx},
\end{equation}
and so $a_k$ should decay exponentially provided $\mu_k$ stays well below $0$. Since $\S(u) < \infty$, \eqref{int:lambdax} provides an upper bound on the Lagrange multiplier $\lambda$. More precisely, for every $\epsilon \in (0,1)$ we have
\begin{equation}\nonumber
 \lambda((\epsilon,1]) \le \frac{\S(u)}{2\epsilon}.
\end{equation}
For large $k > 0$ we expect that $\mu_k$ will stay close to $\{-1,1\}$ most of the time, since otherwise by~\eqref{ELmu} $\lambda$ will often need to be ``large'' to balance the $k^2(1-\mu_k^2)$ term. Hence, to show the exponential decay of $a_k$ we need to rule out the case that $\mu_k$ stays close to $1$. To make these ideas rigorous we need some preparation. 

First, in the following lemma we will show that if $k > m$, then $\mu_k < \mu_m$ as long as $\mu_k < 1$, which by Corollary~\ref{cor:muk:1} holds at least in the interval $[1/k^2,1]$. Then we show that each $\mu_k$ is bounded from above by a function $f_k$, a solution to $f_k'(x) = k^2(1-f_k^2(x)) - 1/x$ with the initial condition $f_k(x) \to \infty$ as $x \to 0+$. It is easy to observe that $f_k$ can be obtained from $f_m$ by rescaling in $x$ by $m^2/k^2$, and so we just need to study $f_1$. We will show in Lemma~\ref{lm:faway1} that $f_1 < 1-\delta$ on a non-trivial interval. Hence, for $x > 1/k^2$, $\mu_k < \mu_m < f_m$ for any $m < k$, $a_m \not\equiv 0$, and so we get that $\mu_k$ stays away from $1$ provided we can show that there are ``many'' non-zero frequencies $a_m$. Since we expect that $\mu_k$ should stay close to $\{-1,1\}$ most of the time, the only possibility is that it stays close to $-1$, which would imply exponential decay of $a_k$. 

\begin{lemma}[Monotonicity of $\mu$'s]\label{lm:muk:monotone}
Let $u \in \A$ be an odd minimizer of $\S$, and $\{a_k\}_{k \in \Ip}$ be the corresponding Fourier coefficients. Let $k,m\in\Ip$, $k > m $ be such that $a_k \not \equiv 0$ and $a_m \not \equiv 0$. Then 
 \begin{equation}\nonumber
  \mu_k(x) < \mu_m(x), \quad \forall x \in [1/k^2,1).
 \end{equation}
\end{lemma}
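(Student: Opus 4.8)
The plan is to exploit the fact that both $\mu_k$ and $\mu_m$ satisfy the same first-order ODE \eqref{ELmu} with the \emph{same} Lagrange multiplier $\lambda$, but with different coefficients $k^2$ and $m^2$ in front of the $(1-\mu^2)$ term. Writing $g := \mu_k - \mu_m$, one formally computes, on the interval where both are defined and $\mu_k < 1$,
\begin{equation}\nonumber
 g'(x) = k^2(1-\mu_k^2) - m^2(1-\mu_m^2) = (k^2-m^2)(1-\mu_k^2) - m^2(\mu_k^2 - \mu_m^2),
\end{equation}
so that $g'(x) = (k^2-m^2)(1-\mu_k^2) - m^2(\mu_k+\mu_m)\, g(x)$. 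This is (in the distributional sense, since $\mu_k', \mu_m'$ are measures but the singular parts cancel because $\lambda$ appears with the same sign in both equations) a linear inhomogeneous ODE for $g$. On $[1/k^2,1)$ Corollary~\ref{cor:muk:1} gives $-1 < \mu_k < 1$, so the inhomogeneity $(k^2-m^2)(1-\mu_k^2)$ is strictly positive there; similarly $-1<\mu_m<1$ on $[1/m^2,1] \supset [1/k^2,1)$, so the coefficient $m^2(\mu_k+\mu_m)$ is a bounded integrable function. Hence $g$ is governed by a Gronwall-type relation and the sign of $g$ is controlled by its behavior as $x$ runs through the interval.

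First I would treat the boundary value. At $x=1$ we have $\mu_k(1) = \mu = \lambda(\{1\}) = \mu_m(1)$ by \eqref{ELmu}, so $g(1)=0$. The idea is then to integrate \emph{backward} from $x=1$: since the inhomogeneity is strictly positive, $g$ cannot become nonnegative as we decrease $x$ — more precisely, the integrating-factor representation
\begin{equation}\nonumber
 g(x) = -\int_x^1 e^{-\int_x^t m^2(\mu_k+\mu_m)(s)\,\mathrm ds}\, (k^2-m^2)(1-\mu_k^2(t))\,\mathrm dt
\end{equation}
shows directly that $g(x) < 0$ for every $x \in [1/k^2,1)$, because the integrand is strictly positive on a set of positive measure (indeed everywhere, since $1-\mu_k^2(t) > 0$). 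One subtlety: the representation must be justified when $\mu_k', \mu_m'$ are only measures. This is handled by noting that $\mu_k - \mu_m$ is actually an absolutely continuous (even Lipschitz) function on compact subsets of $(0,1]$: the measure $\lambda$ enters $\mu_k'$ and $\mu_m'$ identically, so $\mu_k' - \mu_m' = k^2(1-\mu_k^2) - m^2(1-\mu_m^2)$ as measures, and the right-hand side is a bounded $L^\infty_{\mathrm{loc}}$ function on $[1/k^2,1]$ by the bounds from Corollary~\ref{cor:muk:1}; hence $g \in W^{1,\infty}_{\mathrm{loc}}$ and the Gronwall/integrating-factor argument is legitimate in the classical a.e. sense.

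The main obstacle I anticipate is precisely this regularity bookkeeping at the endpoint and the cancellation of the singular part: one has to be careful that the representative \eqref{mu:repre} of $\mu_k$ is the one for which the ODE and the terminal condition $\mu_k(1)=\mu$ hold simultaneously, so that $g(1)=0$ is meaningful, and that on $[1/k^2,1)$ the function $g$ does not touch $0$ from below — for this a standard argument is: if $g(x_0)=0$ for some $x_0 \in [1/k^2,1)$ then on a right-neighborhood of $x_0$ we would have $g' = (k^2-m^2)(1-\mu_k^2) > 0$ at $x_0$ (the linear term vanishes there), forcing $g>0$ just to the right of $x_0$, which together with $g(1)=0$ and the integrating-factor identity yields a contradiction. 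An even cleaner route, avoiding backward integration entirely, is to argue by the same local maximum-principle reasoning used in the proof of Corollary~\ref{cor:muk:1}: suppose the closed set $\{x \in [1/k^2,1) : g(x) \ge 0\}$ is nonempty and let $x_0$ be its supremum; by $g(1)=0$ and continuity one shows $g(x_0)=0$, and then $g'(x_0) = (k^2-m^2)(1-\mu_k^2(x_0)) > 0$ gives $g>0$ immediately to the right of $x_0$, contradicting maximality (or, if $x_0$ is itself an interior accumulation point of the set, contradicting $g(x_0)=0$). Either way one concludes $\mu_k(x) < \mu_m(x)$ on all of $[1/k^2,1)$.
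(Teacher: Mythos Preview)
Your proposal is correct and follows essentially the same route as the paper: subtract the two instances of \eqref{ELmu} so that the measure $\lambda$ cancels, observe $g=\mu_k-\mu_m$ is continuous with $g(1)=0$, and use that at any putative zero $x_0\in[1/k^2,1)$ one has $g'(x_0)=(k^2-m^2)(1-\mu_k^2(x_0))>0$ by Corollary~\ref{cor:muk:1}, yielding a contradiction; the paper carries out exactly this maximum-principle argument (your ``cleaner route'').

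One small slip: since $k>m$ you have $1/k^2<1/m^2$, so $[1/m^2,1]\subset[1/k^2,1)$, not $\supset$; thus Corollary~\ref{cor:muk:1} does \emph{not} directly give $|\mu_m|<1$ on all of $[1/k^2,1)$, and your justification that $m^2(\mu_k+\mu_m)$ is bounded there needs a different argument (e.g.\ $a_m>0$ and $a_m'$ of locally bounded variation on compact subsets of $(0,1]$). This does not affect your second (maximum-principle) route, which needs only $1-\mu_k^2>0$.
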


\begin{proof}
 Subtracting~(\ref{ELmu}) for $\mu_m$ from (\ref{ELmu}) for $\mu_k$ gives
 \begin{equation}\nonumber
  \left(\mu_k - \mu_m\right)' = \left(k^2 - m^2\right) \left( 1 - \mu_k^2 \right) + m^2 \left( \mu_m^2 - \mu_k^2 \right),
 \end{equation}
 and $\mu_k(1) - \mu_m(1) = \mu - \mu = 0$. We see that $\mu_k - \mu_m$ is a continuous function, and so $M := \left\{ x : \mu_k(x) < \mu_m(x) \right\}$ is an open set . We want to show that $[1/k^2,1) \subset M$. First, by the previous lemma $\mu_k^2 < 1$ in $[1/k^2,1]$. Then $\mu_m(1) = \mu_k(1)$ together with $k > m$ imply $(\mu_k-\mu_m)'(1) > 0$, in particular some open left neighborhood of $1$ belongs to $M$. Let $x_0$ be the smallest point in $[0,1]$ such that $(x_0,1) \subset M$. If $x_0 < k^{-2}$, the lemma follows immediately. Let us therefore assume that $x_0 \in [1/k^2,1)$. 
 Since $\mu_k(x_0) = \mu_m(x_0)$, $k > m$, and $\mu_k^2(x_0) < 1$, we have that $\mu_k'(x_0) - \mu_m'(x_0) > 0$. In particular, $\mu_k > \mu_m$ in some right neighborhood of $x_0$, which contradicts the definition of $x_0$. 
\end{proof}

\begin{lemma}\label{lm:faway1}
 Let $f : [1,b] \to (-1,\infty)$ with some $b > 1$, $f(1) < 1$, be a left-continuous function which for any $[\alpha,\beta] \subset [1,b]$ satisfies
 \begin{gather}
   f(\beta) - f(\alpha) \le \int_{\alpha}^{\beta} 1 - f^2(x) - 1/x \dx\label{f:ode}.
 \end{gather}  
 Then $f(x) \le \sqrt{1-1/x}$ for all $x \in [4,b]$. 
\end{lemma}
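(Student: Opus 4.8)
The plan is to compare $f$ against the barrier $g(x):=\sqrt{1-1/x}$. The point of this choice is that $g$ is exactly the moving stable zero of the vector field: one has $1-g^2(x)-1/x\equiv 0$ on $(1,b]$, so the right-hand side of \eqref{f:ode} with $f$ replaced by $g$ vanishes identically, while $g$ is continuous and strictly increasing on $[1,b]$. Note also that taking $\alpha=1$ in \eqref{f:ode} gives $f(\beta)\le f(1)+(\beta-1)$, so $f$ is bounded above on $[1,b]$ and all the integrals below are well defined.

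First I would prove a \emph{trapping} statement: if $f(x_0)\le g(x_0)$ for some $x_0\in[1,b)$, then $f\le g$ on $[x_0,b]$. Suppose not and pick $x_1\in(x_0,b]$ with $f(x_1)>g(x_1)$; set $x_2:=\sup\{x\in[x_0,x_1]:f(x)\le g(x)\}$. Since $f-g$ is left-continuous and is $\le 0$ at $x_0$ but positive at $x_1$, one checks that $x_0\le x_2<x_1$, that $f(x_2)\le g(x_2)$, and that $f>g$ on $(x_2,x_1]$. On $(x_2,x_1]$ then $f>g\ge 0$, hence $f^2>g^2=1-1/x$ and so $1-f^2-1/x<0$ there; applying \eqref{f:ode} on $[x_2,x_1]$ gives $f(x_1)-f(x_2)\le\int_{x_2}^{x_1}(1-f^2-1/x)\dx<0$, whence $f(x_1)<f(x_2)\le g(x_2)<g(x_1)$, contradicting $f(x_1)>g(x_1)$.

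By the trapping statement it then suffices to exhibit a single $x_0\in[1,4]$ with $f(x_0)\le g(x_0)$, since then $f\le g$ on $[x_0,b]\supseteq[4,b]$ (and if $b<4$ the assertion is vacuous). Assume instead that $f(x)>g(x)$ for all $x\in[1,4]$. As above $1-f^2-1/x<0$ on $[1,4]$, so \eqref{f:ode} forces $f$ to be strictly decreasing on $[1,4]$; hence $f(x)\ge f(4)>g(4)=\sqrt3/2$ for all $x\in[1,4]$, so that $f^2\ge 3/4$ on $[1,4]$. Then \eqref{f:ode} on $[1,4]$, together with $\int_1^4(1-1/x)\dx=3-\ln 4$ and $\int_1^4 f^2\ge 9/4$, gives
\[ f(4)-f(1)\ \le\ \int_1^4\big(1-f^2-1/x\big)\dx\ \le\ (3-\ln 4)-\tfrac94\ =\ \tfrac34-\ln 4 , \]
whereas $f(4)-f(1)>\sqrt3/2-1$ because $f(4)>\sqrt3/2$ and $f(1)<1$. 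Combining, $\sqrt3/2-1<\tfrac34-\ln 4$, i.e.\ $\ln 4<\tfrac74-\tfrac{\sqrt3}{2}\le\tfrac54$, which is absurd since $\ln 4=2\ln 2>\tfrac54$. This contradiction proves the lemma.

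The step needing the most care is the last one: the pointwise inequality $f>\sqrt{1-1/x}$ by itself does \emph{not} close the numerical estimate, and the essential trick is to use \eqref{f:ode} first to deduce monotonicity of $f$ on $[1,4]$, which upgrades the bound to the uniform $f\ge f(4)>\sqrt3/2$ and thereby makes $\int_1^4 f^2$ large enough. A secondary point requiring attention is the left-continuity bookkeeping in the trapping step — verifying that $x_2<x_1$, that $f(x_2)\le g(x_2)$, and that $f>g$ holds strictly on all of $(x_2,x_1]$.
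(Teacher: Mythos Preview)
Your proof is correct and follows essentially the same strategy as the paper's: a trapping argument showing that once $f\le g$ it stays below, combined with an entry argument showing $f$ must dip below $g$ somewhere in $[1,4]$. The only difference is cosmetic: the paper first shows $f(x_1)\le\sqrt{3/4}$ for some $x_1\in[1,2]$ (using $1/x\ge 1/2$ there) and then lets $f$ drift down into $M$ by $x=4$, whereas you work on all of $[1,4]$ at once and use the monotonicity of $f$ to bootstrap the pointwise bound $f>g$ to the uniform bound $f\ge f(4)>\sqrt3/2$ before integrating --- a slightly slicker variant of the same idea.
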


\begin{proof}
 First we claim that $f(x) \le \sqrt{3/4}$ for some $x \in [1,2]$. Indeed, if this were not true, then $1-f^2(x)-1/x \le -1/4$ for $x \in [1,2]$, and so we would obtain a contradiction since~\eqref{f:ode} implies $\displaystyle f(2) \le f(1) + \int_1^2 -1/4 = 3/4 < \sqrt{3/4}$. Hence there exists $x_1 \in [1,2]$ such that $f(x_1) \le \sqrt{3/4}$. 

 We define $M := \left\{ x \in [1,b] : f(x) \le \sqrt{1-1/x} \right\}$. We claim that $x_2 \in M$ for some $x_2 < 4$, and that $[x_2,b] \subset M$. Since $f(x_1) \le \sqrt{3/4}$ and $1-f^2(x)-1/x < 0$ as long as $x \not\in M$, we see that $f(x) < f(x_1)$ for all $x > x_1$ such that $(x_1,x) \cap M = \emptyset$. Hence either $x \in M$ for some $x_1 < x < 4$ or $f(4) < \sqrt{3/4}$. In both cases we proved that $x \in M$ for some $x_1 < x \le 4$. 

 To prove that $[x_2,b] \subset M$, let us assume the contrary. Then there exists a maximal $x_3 \in [1,b)$ such that $[x_2,x_3] \subset M$ ($f$ is lower semicontinuous, and so such $x_3$ exists), and also $[x_3,x_3+\epsilon] \cap M = \emptyset$ for some $\epsilon > 0$, and $f(x_3) = \sqrt{1-1/x_3}$. Since for $x \in (x_3,x_3+\epsilon)$ we have $1-1/x-f^2(x) < 0$, for any $x_4 \in (x_3,x_3+\epsilon)$ \eqref{f:ode} implies:
 \begin{equation}
  f(x_4) \le f(x_3) + \int_{x_3}^{x_4} 1-1/x-f^2(x) \dx < f(x_3) = \sqrt{1-1/x_3} < \sqrt{1-1/x_4}.
 \end{equation}
 i.e. $x_4 \in M$ -- a contradiction.
 \end{proof}

% \begin{remark}
%  In Lemma~\ref{lm:faway1} we qualitatively established that $f$ has to stay below $1$. Instead of proving this statement analytically one can get convinced by numerically computing the solution to the ODE $f'(x) = 1 - f^2(x) - 1/x$ (see Figure~\ref{fig1}).
 
% \begin{figure}
%  \caption{Solution to $f'(x) = 1-f^2(x) - 1/x$, $f(0+)=\infty$}\label{fig1}
%  \centering
%   \begin{subfigure}[b]{0.45\textwidth}
%                 \centering
%                 \includegraphics[width=\textwidth]{gph1.eps}
% %                 \caption{A gull}
% %                 \label{fig:gull}
%   \end{subfigure}%
%   \begin{subfigure}[b]{0.45\textwidth}
%                 \centering
%                 \includegraphics[width=\textwidth]{gph3.eps}
% %                 \caption{A gull}
% %                 \label{fig:gull}
%   \end{subfigure}
% \end{figure}
% \end{remark}

\begin{cor}\label{cor:muk:bound}
 Let $l > 4$. Then there exists $\Delta=\Delta(l)$, $0 < \Delta \le 1/2$ such that for any $k > 0$, $a_k \not\equiv 0$ we have
 \begin{equation}\label{muk:1delta}
  \mu_k(x) \le 1-\Delta, \quad x \in [4/k^2,\min(l/k^2,1)].
 \end{equation}
 %where $\Delta$ was obtained in Lemma~\ref{lm:faway1}.
\end{cor}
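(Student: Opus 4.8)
The plan is to fix $l > 4$ and derive the bound by comparing $\mu_k$ with the universal barrier function from Lemma~\ref{lm:faway1}. First I would observe that, by the definition~\eqref{eq:mu} and the Euler-Lagrange relation~\eqref{ELmu} together with the lower bound $\lambda \geq 1/x$ (Lemma~\ref{lm:lambda:lowerbound}), the rescaled quantity $g_k(t) := \mu_k(t/k^2)$ satisfies, for $[\alpha,\beta] \subset [1, k^2]$,
\begin{equation}\nonumber
 g_k(\beta) - g_k(\alpha) = \int_\alpha^\beta \bigl(1 - g_k^2(s)\bigr) - \tfrac{1}{k^2}\lambda(s/k^2)\,\ud s \le \int_\alpha^\beta 1 - g_k^2(s) - \tfrac{1}{s}\,\ud s,
\end{equation}
using $\lambda((a,b)) \ge \int_a^b \dx/x$ after the change of variables. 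Moreover, by Claim~1 inside the proof of Corollary~\ref{cor:muk:1} we have $\mu_k(1/k^2) < 1$, i.e. $g_k(1) < 1$. Hence $g_k$ is exactly a function of the type considered in Lemma~\ref{lm:faway1} on the interval $[1, \min(k^2, \cdot)]$.

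Next I would apply Lemma~\ref{lm:faway1} to $f = g_k$ (valid as long as $k^2 \ge 4$, i.e. $k \ge 2$; the case of small $k$ with $a_k \not\equiv 0$ is handled separately since there are only finitely many such $k$ in any bounded range, or one notes the interval $[4/k^2, \min(l/k^2,1)]$ is empty or trivial there). This gives $g_k(t) \le \sqrt{1 - 1/t}$ for $t \in [4, \min(k^2, \cdot)]$, equivalently $\mu_k(x) \le \sqrt{1 - x\,k^{-2}}$ for $x \in [4/k^2, \min(1, \cdot)]$. Wait---this by itself already degenerates to $1$ as $x \to k^{-2}\cdot(\text{large})$, so it only helps when $x$ is comparable to $4/k^2$. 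To get a uniform gap on the full range $[4/k^2, \min(l/k^2,1)]$, I would instead combine this with the monotonicity of the barrier: on $x \in [4/k^2, l/k^2]$ we have $x k^{-2} \le l k^{-2}$ is not the right scaling---rather $x \le l/k^2$ gives $x k^2 \le l$... let me restate. The rescaled variable is $t = x k^2 \in [4, l]$, so $g_k(t) \le \sqrt{1 - 1/t} \le \sqrt{1 - 1/l}$ for $t \in [4,l]$, using that $t \mapsto \sqrt{1-1/t}$ is increasing. Thus setting $\Delta := \Delta(l) := 1 - \sqrt{1 - 1/l} \in (0, 1/2]$ (note $\Delta \le 1/2$ since $l > 4$ forces $\sqrt{1-1/l} > \sqrt{3/4} > 1/2$), we obtain $\mu_k(x) = g_k(xk^2) \le 1 - \Delta$ for all $x$ with $xk^2 \in [4, l]$, i.e. $x \in [4/k^2, l/k^2]$, and also for $x \in [4/k^2, 1]$ when $l/k^2 > 1$ by the same monotonicity (since then $xk^2 \le k^2$, but we only need $xk^2 \le l$ is not guaranteed)---so in the case $l/k^2 \ge 1$ I would use instead that $\mu_k(x) \le \sqrt{1 - xk^{-2}} \le \sqrt{1-1/k^2}$ is still too weak; here one should note $x \le 1 \le l/k^2$ means $xk^2 \le l$, so the bound $g_k(xk^2) \le \sqrt{1-1/l}$ still applies. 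Either way $\mu_k(x) \le 1 - \Delta$ on $[4/k^2, \min(l/k^2, 1)]$.

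The main obstacle I anticipate is making sure Lemma~\ref{lm:faway1} is genuinely applicable: one must check that $g_k$ is left-continuous and satisfies the integral inequality~\eqref{f:ode} with the correct constant $1/x$ (not a smaller one), which hinges on using the sharp lower bound $\lambda \ge 1/x$ from~\eqref{lambda:ge} and the boundary term $\lambda(\{1\})$ correctly after rescaling; and that the initial condition $g_k(1) < 1$ is in force, which is precisely Claim~1 of Corollary~\ref{cor:muk:1}. A secondary point is the bookkeeping for which $k$ the interval $[4/k^2, \min(l/k^2,1)]$ is nonempty --- this requires $k^2 \ge 4$, i.e. $k \ge 2$, and for such $k$ the rescaling $t = xk^2$ maps into $[4, l]$ as needed; for $k < 2$ with $a_k \not\equiv 0$ (only $k = \pi/L$-type values, finitely many below $2$) the statement either is vacuous or follows from $\mu_k < 1$ on $[1/k^2, 1]$ directly with $\Delta$ chosen small enough, so no separate hard argument is needed. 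Once these checks are in place the conclusion is immediate.
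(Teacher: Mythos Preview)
Your proposal is correct and follows essentially the same route as the paper: rescale $f(t):=\mu_k(t/k^2)$, verify the hypotheses of Lemma~\ref{lm:faway1} via left-continuity, $f(1)=\mu_k(k^{-2})<1$ (Corollary~\ref{cor:muk:1}), and the integral inequality from~\eqref{mu:repre} together with~\eqref{lambda:ge}, then conclude $\mu_k(x)\le\sqrt{1-1/(xk^2)}\le\sqrt{1-1/l}=1-\Delta$ for $xk^2\in[4,l]$; the paper disposes of $k\le 2$ in one line since then $4/k^2\ge 1$ and the interval is empty (or a single point), which matches your ``vacuous or trivial'' remark.
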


\begin{proof}
 If $k\le 2$, \eqref{muk:1delta} is trivially satisfied. If $k > 2$, let $f(x) := \mu_k(x/k^2)$, and observe that such $f$ satisfies all the assumption of Lemma~\ref{lm:faway1}. Indeed, $f(1) = \mu_k(k^{-2}) < 1$ by Corollary~\ref{cor:muk:1}, $\mu_k$ is left-continuous, and~\eqref{mu:repre} together with \eqref{lambda:ge} imply~\eqref{f:ode}.
%  \begin{equation}\nonumber
%   f'(x) = \frac{\mu_k'(x/k^2)}{k^2} \overset{\eqref{ELmu}}{=} \frac{1}{k^2} \left( k^2 (1 - \mu_k^2(x/k^2)) - \lambda(x/k^2)\right) \overset{\eqref{lambda:ge}}{\le} (1 - \mu_k^2(x/k^2)) - 1/x = 1 - f^2(x) - 1/x,
%  \end{equation}
%  $f(1) = \mu_k(k^{-2})<1$, and $f(x) \to \infty$ as $x \to 0+$. 
 To conclude apply the previous lemma to get $f(x) \le \sqrt{1-1/x} \le \sqrt{1-1/l} = 1-\Delta$ for $x \in [4,l]$ and $\Delta = 1-\sqrt{1-1/l}$.% exactly~\eqref{muk:1delta}.
\end{proof}

In what follows we will show an upper bound for the {\it bending} part of the energy. More precisely, the following lemma will be used to prove that $\displaystyle\fint_0^{x_0} \sum_{k\in\Ip} a_k^2(x)k^4 \dx \lesssim 1$ for any $x_0 \in (0,1]$:

\begin{lemma}[Estimate on the average bending]\label{lm:bending:estimate}
 Let $u \in \A$ be an odd minimizer of $\S$. Then there exists a universal constant $C_1 < \infty$ such that for any $x_0 \in (0,1]$ 
 \begin{equation}\nonumber
  \int_0^{x_0} B(x) \dx \le \frac{B(x_0)}{3}x_0 + C_1 x_0,
 \end{equation} 
where $B(x) = \sum_{k\in\Ip} a_k^2(x) k^4$. 
\end{lemma}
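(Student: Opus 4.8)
The plan is to test the Euler--Lagrange equation~\eqref{EL} with a well-chosen family of test functions $\varphi_k$ and exploit the lower bound $\lambda \ge 1/x$ from~\eqref{lambda:ge} together with the bound $\mu_k \le 1-\Delta$ from Corollary~\ref{cor:muk:bound}. The key identity to start from is~\eqref{66}, namely
\begin{equation}\nonumber
 \int_0^1 \sum_{k\in\Ip} \bigl(a_k'^2 k^2 + a_k^2 k^6\bigr) \Psi \dx = \int_0^1 \sum_{k\in\Ip} a_k^2 k^4 \Psi \ud \lambda = \int_0^1 B(x)\Psi(x)\ud\lambda,
\end{equation}
valid for $\Psi \in \mathcal{D}(0,1)$, and by approximation for $\Psi$ of the form $\chi_{(0,x_0)}$ as well (using $B \in L^1$ and the finiteness of $\lambda$ on compact subsets of $(0,1]$, together with~\eqref{int:lambdax} to control $\lambda$ near $0$ — note $\int_0^{x_0} 2x \ud\lambda \le \S(u)$ forces $\lambda((0,x_0))$ to be finite). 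Thus I expect to obtain an identity roughly of the shape
\begin{equation}\nonumber
 \int_0^{x_0} \sum_{k\in\Ip} \bigl(a_k'^2 k^2 + a_k^2 k^6\bigr) \dx + (\text{boundary term at }x_0) = \int_{(0,x_0)} B \ud\lambda,
\end{equation}
where the boundary term comes from integrating the $a_k' a_k k^2 \Psi'$ piece when $\Psi$ jumps at $x_0$; using $\sum a_k' a_k k^2 = 1$ (the differentiated constraint) this boundary contribution should be exactly $\pm 1$ or controlled by $B(x_0)x_0$-type quantities.

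**Carrying it out.** First I would make the cutoff $\Psi = \chi_{(0,x_0)}$ rigorous, obtaining
\begin{equation}\nonumber
 \int_0^{x_0}\Bigl(\sum_k a_k'^2 k^2 + \sum_k a_k^2 k^6\Bigr)\dx - \Bigl[\sum_k a_k' a_k k^2\Bigr](x_0) = \int_{(0,x_0)} B(x)\ud\lambda(x) \ge \int_0^{x_0}\frac{B(x)}{x}\dx,
\end{equation}
where in the last step I used~\eqref{lambda:ge}, and the bracketed term at $x_0$ equals $1$ by the differentiated constraint. Second, I would bound the left-hand side from above: the $\sum a_k^2 k^6$ term can be compared to $B$ via Cauchy--Schwarz and~\eqref{constr:f} ($\sum a_k^2 k^6 \le B^2/(2x) \cdot$ no — rather $B^2 \le 2x\sum a_k^2 k^6$, giving a lower bound, which is the wrong direction), so instead I expect the useful estimate to run through $\mu_k$: writing $a_k'^2 k^2 = \mu_k^2 a_k^2 k^6$ we get $\sum a_k'^2 k^2 + \sum a_k^2 k^6 = \sum (1+\mu_k^2) a_k^2 k^6$, and on $[4/k^2, \min(l/k^2,1)]$ Corollary~\ref{cor:muk:bound} gives $\mu_k \le 1-\Delta < 1$. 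The point is that $\int_0^{x_0}\frac{B(x)}{x}\dx$ on the right forces $B$ to be controlled: rearranging, $\int_0^{x_0} B(x)(\frac{1}{x} - \text{something bounded})\dx \le 1 + B(x_0)\cdot(\text{bounded by }x_0)$, and since $1/x$ dominates near $0$ this yields $\int_0^{x_0} B \lesssim x_0 B(x_0)/3 + x_0$. The factor $1/3$ should emerge from an integration like $\int_0^{x_0} (x/x_0)^2 \dx = x_0/3$ after using a linear-in-$x$ comparison for $B$, or from the constraint $\sum a_k^2 k^2 = 2x$ being linear.

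**Main obstacle.** The delicate point is handling the behavior near $x=0$: the measure $\lambda$ a priori has $1/x$-type growth there, $B(x)$ can blow up, and one must be careful that the cutoff-approximation of~\eqref{66} to $\Psi = \chi_{(0,x_0)}$ is legitimate — this requires knowing $\lambda((0,x_0)) < \infty$ (from~\eqref{int:lambdax}) and that the boundary term $\sum a_k' a_k k^2 \Psi'$ really converges to the value $1$ at $x_0$ and contributes nothing at $0$ (here $\sum a_k^2 k^2 = 2x \to 0$ and $\sum a_k'^2 k^2$ is in $L^1$ by $\S(u)<\infty$ combined with $k \ge \pi/L \ge$ bounded below... actually since $k^2 \ge 1$ this is immediate once we're away from $0$, but near $0$ one needs the monotone convergence argument exactly as in the derivation of~\eqref{66}). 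The second subtlety is extracting the precise constant $1/3$ rather than just $\lesssim$; I expect this comes from splitting $B$ into the piece where the $\mu_k \le 1-\Delta$ bound applies and a low-frequency remainder, and carefully using $\sum_k a_k^2 k^2 = 2x$ to turn the frequency sums into the linear profile whose average over $(0,x_0)$ relative to its endpoint value is $1/3$. I would structure the proof: (i) justify the cutoff identity; (ii) produce the inequality $\int_0^{x_0}\frac{B}{x}\dx \le 1 + \sum a_k'^2 k^2$-terms; (iii) absorb and rearrange using~\eqref{lambda:ge} and Corollary~\ref{cor:muk:bound} to close with the $B(x_0)x_0/3 + C_1 x_0$ bound.
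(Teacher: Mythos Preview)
Your approach has a genuine gap and will not close. The identity you derive from~\eqref{66} with $\Psi=\chi_{(0,x_0)}$ reads (after the boundary term)
\[
\int_0^{x_0}\sum_{k}\bigl(a_k'^2 k^2 + a_k^2 k^6\bigr)\dx \;-\; 1 \;=\; \int_{(0,x_0)} B\,\ud\lambda \;\ge\; \int_0^{x_0}\frac{B(x)}{x}\dx,
\]
but this is a \emph{lower} bound on the left-hand side, not an upper bound. To extract $\int_0^{x_0} B \le x_0\int_0^{x_0} B/x$ you would then need an upper bound on $\int_0^{x_0}\sum_k(a_k'^2 k^2 + a_k^2 k^6)$, and at this point in the paper no such bound is available: the regularity estimates on $\sum a_k^2 k^6$ and $\sum a_k'^2 k^2$ (Lemma~\ref{lm:reg1} and~\eqref{cor20.1}) are proved only \emph{after} this lemma, via Corollary~\ref{cor:bending}, Proposition~\ref{prop:gap}, and the upper bound on $\lambda$. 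So the argument is circular. Your attempt to rescue it via $\mu_k$ does not help either: writing $a_k'^2 k^2=\mu_k^2 a_k^2 k^6$ just expresses the left-hand side as $\int_0^{x_0}\sum_k(1+\mu_k^2)a_k^2 k^6$, which you still cannot bound. Corollary~\ref{cor:muk:bound} only controls $\mu_k$ on the short window $[4/k^2,l/k^2]$, not globally.

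The paper's proof is entirely different and avoids this circularity: it is a \emph{competitor argument}, not an Euler--Lagrange manipulation. One replaces $a_k$ on $[0,x_0]$ by the linear interpolant $\tilde a_k(x)=a_k(x_0)\,x/x_0$, repairs the constraint deficit $\sum \tilde a_k^2 k^2 = 2x^2/x_0 < 2x$ by adding the construction of Lemma~\ref{lm:construction} (cost $\le C_1 x_0$), and compares energies using Lemma~\ref{lm:relaxed:constrain}. The factor $1/3$ you correctly guessed arises as $\int_0^{x_0}(x/x_0)^2\dx=x_0/3$ comes precisely from the bending energy of this linear competitor: $\int_0^{x_0}\sum_k \tilde a_k^2 k^4 = B(x_0)\int_0^{x_0}(x/x_0)^2\dx = B(x_0)x_0/3$. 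The membrane energy of the competitor is handled by $\int_0^{x_0}\sum_k \tilde a_k'^2 = x_0^{-1}\sum_k a_k^2(x_0) \le \int_0^{x_0}\sum_k a_k'^2$ (Cauchy--Schwarz with $a_k(0)=0$), so it is absorbed into the minimizer's own membrane energy on $[0,x_0]$ and drops out of the comparison.
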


\begin{proof}
 We prove the lemma by constructing a competitor for $u$. More precisely, we take $u$ and in the interval $[0,x_0]$ we replace its coefficients $a_k$ by linear functions. %It is easy to observe that the new deformation will no longer satisfy the constraint, and so we will do additional change to the construction to compensate that. 
 
 Given $x_0 \in (0,1]$, we define
 \begin{equation}\nonumber
  \tilde a_k(x) := \begin{cases} a_k(x_0)\frac{x}{x_0} & x \in [0,x_0] \\ a_k(x) & x \in (x_0,1]. \end{cases}
 \end{equation}
 Before comparing the energy of the new deformation $\tilde a_k$ with the energy of $a_k$, we observe that the deformation $\tilde a_k$ does not satisfy the constraint~\eqref{constr:f}. Indeed, for $x \in (0,x_0)$
 \begin{equation}\label{eq:b2}
  \sum_{k\in\Ip} \tilde a_k(x)^2 k^2 = \sum_{k\in\Ip} a_k(x_0)^2 x^2 x_0^{-2} k^2 \overset{\eqref{constr:f}}{=} 2x_0 \frac{x^2}{x_0^2} < 2x. 
 \end{equation}
 To compensate this loss we use Lemma~\ref{lm:construction} with $a=0$ and $b=x_0$ to obtain $b_k$ such that
 \begin{equation}\nonumber
  \sum_{k\in\Ip} b_k^2(x) k^2 = 2x, \quad x \in [0,x_0],
 \end{equation}
 and
 \begin{equation}\nonumber
  \int_0^1 \sum_{k\in\Ip} b_k'^2 + b_k^2 k^4 \dx \le C_1 x_0,
 \end{equation}
 where $C_1$ is some universal constant. Now we combine $\tilde a_k$ and $b_k$ by setting $c_k^2(x) := \tilde a_k^2(x) + b_k^2(x)$ to obtain a deformation $c_k$ which satisfies
 \begin{gather}\nonumber
  \sum_{k\in\Ip} c_k^2(x) k^2 \ge 2x, \quad x \in [0,1],\\
  \int_0^1 \sum_{k\in\Ip} c_k'^2 + c_k^2 k^4 \dx \overset{\textrm{Lm}~\ref{lm:sublinear}}{\le} \int_0^1 \sum_{k\in\Ip} \tilde a_k'^2 + \tilde a_k^2 k^4 \dx + C_1 x_0.\label{eq:b3}
 \end{gather}
%  where in the second inequality we used the same argument as in the proof of Lemma~\ref{lm:odd}. 
Since $u$ is a minimizer and $\sum_{k\in\Ip} c_k^2(x) k^2 \ge 2x$,  Lemma~\ref{lm:relaxed:constrain} and~\eqref{eq:b3} imply
 \begin{equation}\label{eq:b4}
  \int_0^1 \sum_{k\in\Ip} a_k'^2 + a_k^2 k^4 \dx \le \int_0^1 \sum_{k\in\Ip} c_k'^2 + c_k^2 k^4 \dx \overset{\eqref{eq:b3}}{\le} \int_0^1 \sum_{k\in\Ip} \tilde a_k'^2 + \tilde a_k^2 k^4 \dx + C_1 x_0.
 \end{equation}
 To conclude, we observe the following relations:
 \begin{align*}
  \int_{x_0}^1 \sum_{k\in\Ip} \tilde a_k'^2 + \tilde a_k^2 k^4 \dx &= \int_{x_0}^1 \sum_{k\in\Ip} a_k'^2 + a_k^2 k^4 \dx,\\
  \int_0^{x_0} \sum_{k\in\Ip} \tilde a_k^2(x) k^4 \dx &= \sum_{k\in\Ip} a_k^2(x_0) k^4 \int_0^{x_0} (x/x_0)^2 \dx = \frac{B(x_0)}{3} x_0,  \\
  \int_0^{x_0} \sum_{k\in\Ip} \tilde a_k'^2 \dx = \frac{1}{x_0} \sum_{k \in \Ip} a_k^2(x_0) &\le \int_0^{x_0} \sum_{k\in\Ip} a_k'^2 \dx,
 \end{align*}
 and so \eqref{eq:b4} implies
 \begin{equation}\nonumber
  \int_0^{x_0} B(x) \dx = \int_0^{x_0} \sum_{k\in\Ip} a_k^2 k^4 \dx \le \frac{x_0}{3}\sum_{k\in\Ip} a_k^2(x_0) k^4 + C_1 x_0 = \frac{x_0}{3} B(x_0) + C_1 x_0.
 \end{equation}  
\end{proof}

\begin{cor}\label{cor:bending}
 Let $u \in \A$ be an odd minimizer of $\S$. There exists a universal constant $C_2 < \infty$ such that 
 \begin{equation}\nonumber
  \int_0^{x_0} B(x) \dx \le C_2 x_0, \qquad x_0 \in (0,1].
 \end{equation}
\end{cor}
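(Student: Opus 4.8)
The plan is to convert the pointwise bound of Lemma~\ref{lm:bending:estimate} into a differential inequality for the primitive $G(x):=\int_0^x B(t)\,dt$ and to integrate it; concretely it is a Gronwall-type estimate for $x\mapsto G(x)/x^3$, which works because $G(1)$ is finite and, $u$ being a minimizer, even universally bounded.

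First I would record the basic properties of $G$. Since $\S(u)<\infty$ and $0\le B\le\sum_{k\in\Ip}(a_k'^2+a_k^2k^4)$ with the right-hand side in $L^1(0,1)$, the function $G$ is absolutely continuous and non-decreasing on $[0,1]$, with $G(1)=\int_0^1 B\,dx\le\S(u)$ and $G'(x)=B(x)$ for a.e.\ $x$ (at every Lebesgue point of $B$). Evaluating Lemma~\ref{lm:bending:estimate} at such points and rearranging gives, for a.e.\ $x\in(0,1]$,
\[
 G'(x)\ \ge\ \frac{3G(x)}{x}-3C_1 .
\]

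Next I would introduce $H(x):=G(x)/x^3$, which is absolutely continuous on $[\delta,1]$ for every $\delta>0$ (a product of the absolutely continuous $G$ and the smooth $x^{-3}$), and compute for a.e.\ $x$ that $H'(x)=\dfrac{xG'(x)-3G(x)}{x^4}\ge\dfrac{-3C_1x}{x^4}=-\dfrac{3C_1}{x^3}$. Integrating this over $[x_0,1]$ (legitimate since $H$ is absolutely continuous there and $x^{-3}$ is integrable away from $0$) yields
\[
 G(1)-\frac{G(x_0)}{x_0^3}=H(1)-H(x_0)=\int_{x_0}^1 H'(x)\,dx\ \ge\ -3C_1\int_{x_0}^1 x^{-3}\,dx=-\frac{3C_1}{2}\Big(\frac{1}{x_0^2}-1\Big),
\]
hence $G(x_0)\le G(1)\,x_0^3+\tfrac{3C_1}{2}x_0\le\big(G(1)+\tfrac{3C_1}{2}\big)x_0$ since $x_0^3\le x_0$ on $(0,1]$. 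This is exactly the asserted inequality with $C_2:=G(1)+\tfrac{3C_1}{2}$.

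Finally I would verify that $C_2$ is a universal constant: $G(1)\le\S(u)$, and $\S(u)$ is universally bounded because the competitor produced by Lemma~\ref{lm:construction} with $b=1$ (already used in the proof of Proposition~\ref{prop:minimizer}) has energy $\lesssim 1$. The only point that requires a little care — and the mild ``obstacle'' in an otherwise routine computation — is the identification of the pointwise quantity $B(x_0)=\sum_{k\in\Ip}a_k^2(x_0)k^4$ appearing in Lemma~\ref{lm:bending:estimate} with the a.e.\ derivative $G'(x_0)$ of its primitive; this is precisely the Lebesgue differentiation theorem, after which the integration above goes through verbatim.
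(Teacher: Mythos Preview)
Your proof is correct and takes a genuinely different route from the paper. The paper argues by contradiction via a dyadic bootstrap: assuming $\int_0^{x_0}B\ge C_2x_0$ for some $x_0$, Lemma~\ref{lm:bending:estimate} forces $B(x_1)\ge C_2$ on $[x_0,\min(2x_0,1)]$, hence $\int_0^{x_1}B\ge C_2x_1$ there as well; iterating pushes the violation up to $x_1=1$, contradicting $\int_0^1 B\le\sigma_L<C_2$. You instead recast the same lemma as the differential inequality $G'(x)\ge 3G(x)/x-3C_1$ and integrate the resulting bound on $(G/x^3)'$ --- a clean Gronwall-type argument that avoids the doubling iteration and directly produces an explicit constant $G(1)+\tfrac{3C_1}{2}$. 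Both approaches ultimately feed on the same two inputs (Lemma~\ref{lm:bending:estimate} and the universal bound on $\S(u)$), but yours is more streamlined; the paper's discrete propagation has the minor advantage of not needing the a.e.\ identification $B=G'$, though as you note this is immediate from Lebesgue differentiation. One cosmetic point: your $C_2:=G(1)+\tfrac{3C_1}{2}$ as written depends on $L$ through $G(1)$; to get a truly universal constant you should set $C_2:=4\sigma_1+\tfrac{3C_1}{2}$ (using $G(1)\le\sigma_L\le4\sigma_1$ from Lemma~\ref{lm:sigmal}), which you effectively do in your final paragraph.
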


\begin{proof}
 Define $C_2 := \max\left( 6 C_1, 2\max_{l\ge1} \sigma_l\right) < \infty$, where $\sigma_l = \inf \mathcal{S}_l$ (see \eqref{sigmal}). We prove that if there exists $x_0$ such that $\displaystyle\int_0^{x_0} B(x) \dx \ge C_2 x_0$, then also $\displaystyle\int_0^1 B(x) \dx \ge C_2$. This would in turn give a contradiction since $\displaystyle\int_0^1 B(x) \dx \le \sigma_L < C_2$. 

 So let us assume that $\displaystyle\int_0^{x_0} B(x) \dx \ge C_2 x_0$ for some $x_0 \in (0,1]$. Then Lemma~\ref{lm:bending:estimate} implies that for any $x_1 \in [x_0,\min(2x_0,1)]$ we have
 \begin{equation}\nonumber
  C_2 x_0 \le \int_0^{x_0} B(x) \dx \le \int_0^{x_1} B(x) \dx \le \frac{B(x_1)}{3}x_1 + C_1 x_1 \le \frac{2B(x_1)}{3}x_0 + \frac{C_2}{3} x_0,
 \end{equation}
 and so $B(x_1) \ge C_2$ for any $x_1 \in [x_0,\min(2x_0,1)]$. Therefore
 \begin{equation}\nonumber
   \int_0^{x_1} B(x) \dx \ge \int_0^{x_0} B(x) \dx + (x_1 - x_0) C_2 \ge C_2 x_1.
 \end{equation}
 We proved that if $\displaystyle\int_0^{x_0} B(x) \dx \ge C_2 x_0$, then this is also true if we replace $x_0$ by any $x_1 \in [x_0,\min(2x_0,1)]$. Therefore it is also true for any $x_1 \in [x_0,1]$, in particular for $x_1 = 1$, which gives us a contradiction. %and so $\displaystyle \int_0^1 B(x) \dx \ge C_2$. 
\end{proof}

\subsection{Gap estimate}

In Corollary~\ref{cor:ak:positive} we showed that each frequency $a_k$ is either identically zero or is strictly positive in $(0,1]$, but so far we do not know which coefficients $a_k$ do not vanish. In this part we will estimate how large a gap between two non-vanishing frequencies can be.

\begin{lemma}\label{lm:trivak}
 Let $u \in \A$ be an odd minimizer of $\S$, and $\{a_k\}_{k \in \Ip}$ be the corresponding Fourier coefficients. Then for $k \in \Ip, k \le (1/2)^{1/4}$: 
 \begin{equation}\nonumber
  a_k \equiv 0.
 \end{equation} 
\end{lemma}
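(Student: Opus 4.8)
The approach is by contradiction. Suppose the (odd) minimizer $u$ from Proposition~\ref{prop:minimizer} has $a_{k_0}\not\equiv0$ for some $k_0\in\Ip$ with $k_0^4\le 1/2$; I will build a competitor $\bar u\in\A$ with $\S(\bar u)<\S(u)$, contradicting minimality. The point is that a very low frequency carries an unnecessarily large \emph{stretching} part of the energy: since $u(0,\cdot)=0$ we have $a_{k_0}(0)=0$, so by the Poincar\'e inequality on $(0,1)$ for functions vanishing at an endpoint, $\int_0^1 a_{k_0}^2\,\dx \le \tfrac{4}{\pi^2}\int_0^1 a_{k_0}'^2\,\dx$, and $\tfrac{4}{\pi^2}<\tfrac12$. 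This suggests replacing the mode $a_{k_0}(x)\sin(k_0y)$ by the mode $\tfrac12 a_{k_0}(x)\sin(2k_0 y)$ (note $2k_0\in\Ip$): halving the amplitude keeps the contribution $a_{k_0}^2k_0^2$ to the constraint unchanged, divides the stretching energy by $4$, and only multiplies the bending energy by $4$ — a net gain once $k_0$ is small.

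Concretely, define $\bar u$ through its Fourier coefficients $\bar a_m:=a_m$ for $m\notin\{k_0,2k_0\}$, $\bar a_{k_0}:=0$, and $\bar a_{2k_0}:=\big(a_{2k_0}^2+\tfrac14 a_{k_0}^2\big)^{1/2}$. Then $\bar u$ is odd and $2L$-periodic in $y$, $\bar a_m(0)=0$ for all $m$, and
\begin{equation*}
 \sum_{m\in\Ip}\bar a_m^2(x)m^2 = \sum_{m\in\Ip}a_m^2(x)m^2 - a_{k_0}^2(x)k_0^2 + \tfrac14 a_{k_0}^2(x)(2k_0)^2 = 2x,
\end{equation*}
so $\bar u\in\A$. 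Viewing $\bar u$ as the combination, in the sense of Lemma~\ref{lm:sublinear}, of the deformation obtained from $u$ by setting $a_{k_0}\equiv0$ and of the single-mode deformation $\tfrac12 a_{k_0}(x)\sin(2k_0 y)$, Lemma~\ref{lm:sublinear} yields
\begin{equation*}
 \S(\bar u)\le\Big(\S(u)-\!\int_0^1 a_{k_0}'^2+a_{k_0}^2k_0^4\,\dx\Big)+\int_0^1 \tfrac14 a_{k_0}'^2+\tfrac14 a_{k_0}^2(2k_0)^4\,\dx = \S(u)+\int_0^1 3a_{k_0}^2k_0^4-\tfrac34 a_{k_0}'^2\,\dx .
\end{equation*}

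Finally, by the Poincar\'e inequality and $k_0^4\le\tfrac12<\tfrac{\pi^2}{16}$,
\begin{equation*}
 \int_0^1 3a_{k_0}^2k_0^4-\tfrac34 a_{k_0}'^2\,\dx \le 3\Big(k_0^4-\tfrac{\pi^2}{16}\Big)\int_0^1 a_{k_0}^2\,\dx<0,
\end{equation*}
the strictness coming from $\int_0^1 a_{k_0}^2\,\dx>0$ (in fact $a_{k_0}>0$ on $(0,1]$ by Corollary~\ref{cor:ak:positive}). Hence $\S(\bar u)<\S(u)$, a contradiction, so $a_{k_0}\equiv0$. I do not expect a genuine obstacle here; the only care needed is the bookkeeping in the energy comparison — correctly isolating the $k_0$- and $2k_0$-contributions and invoking the subadditivity of Lemma~\ref{lm:sublinear} — together with the use of the sharp Poincar\'e constant $4/\pi^2$, which is precisely what makes the stated threshold $k_0^4\le1/2$ (rather than only $k_0^4\le1/4$, which would follow from the crude constant $1$) go through.
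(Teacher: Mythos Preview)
Your proof is correct and is essentially the same as the paper's: both build the competitor by setting $\bar a_{k_0}=0$, $\bar a_{2k_0}=\sqrt{a_{2k_0}^2+\tfrac14 a_{k_0}^2}$, check that the constraint is preserved, and use Lemma~\ref{lm:sublinear} for the energy comparison. The only organizational difference is that the paper first isolates the intermediate inequality $\int_0^1 a_k'^2\le 4\int_0^1 a_k^2 k^4$ (valid for \emph{every} $k$, and reused later in the paper) and then combines it with the cruder Poincar\'e bound $\int_0^1 a_k^2\le \tfrac12\int_0^1 a_k'^2$ obtained from H\"older, which already yields the threshold $2k^4\ge 1$; so the sharp constant $4/\pi^2$ you invoke is not actually needed.
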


\begin{proof}
 First we claim that for any $k \in \Ip$ 
 \begin{equation}\label{membrane:le:bending}
  \int_0^1 a_k'^2(x) \dx \le 4 \int_0^1 a_k^2(x) k^4 \dx. 
 \end{equation}
 Indeed, if~\eqref{membrane:le:bending} were not true, then replacing $a_{k}$ and $a_{2k}$ by $\bar a_{k}:=0$ and $\bar a_{2k} := \sqrt{ a_{2k}^2 + a_{k}^2/4 }$ will decrease the energy while the constraint~\eqref{constr:f} will not change. Truly, we have $\bar a_{k}^2 k^2 + \bar a_{2k}^2 (2k)^2 = 0 + (a_{2k}^2 + a_{k}^2 / 4)(2k)^2 = a_{k}^2 k^2 + a_{2k}^2 (2k)^2$ and by Lemma~\ref{lm:sublinear} the energy decreases:
\begin{multline}\nonumber
 \int_0^1 \bar a_{k}'^2 + \bar a_{k}^2 k^4 + \bar a_{2k}'^2 + \bar a_{2k}^2 (2k)^4 \overset{Lm~\ref{lm:sublinear}}{\le} \int_0^1 \frac{ a_{k}'^2}{4} + 4 a_{k}^2 k^4 + a_{2k}'^2 + a_{2k}^2 (2k)^4 \dx 
 \\ < \int_0^1 a_{k}'^2 + a_{k}^2 k^4 + a_{2k}'^2 + a_{2k}^2 (2k)^4 \dx,
\end{multline}
where the last inequality holds provided~\eqref{membrane:le:bending} is false.

Since $a_k(0)=0$, by H\"older's inequality
\begin{equation}\nonumber
 \int_0^1 a_k^2(x) \dx = \int_0^1 \left( \int_0^x a'_k(x') \ud x' \right)^2 \le \left( \int_0^1 x \dx \right) \left( \int_0^1 a_k'^2(x) \dx \right) \overset{\eqref{membrane:le:bending}}{\le} 2k^4 \int_0^1 a_k^2(x) \dx.
\end{equation}
To conclude it is enough to observe that using the previous relation, $2k^4 < 1$ implies $\displaystyle\int_0^1 a_k^2(x) \dx = 0$.
\end{proof}

\begin{proposition}\label{prop:gap}
 There exists a universal constant $C_{gap}$ such that if $a_K \not\equiv 0$, $a_{GK}\not\equiv 0$, and $a_k \equiv 0$ for $k \in \Ip, K < k < GK$, then $G \le C_{gap}$. 
\end{proposition}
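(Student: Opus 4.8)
The plan is to argue by contradiction: assume $a_K\not\equiv0$, $a_{GK}\not\equiv0$, $a_k\equiv0$ for $K<k<GK$, and that $G$ is larger than any fixed universal constant, and derive a contradiction. The organizing idea is to split the mass in the constraint \eqref{constr:f} according to whether the frequency lies below the gap or above it: for $x\in(0,1]$ write
$2x=S_-(x)+S_+(x)$, where $S_-(x):=\sum_{k\le K}a_k^2(x)k^2$ and $S_+(x):=\sum_{k\ge GK}a_k^2(x)k^2$ (there are no terms in between). The contradiction comes from squeezing $G$ between two opposite facts about $S_-$ and $S_+$.

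First, the high part $S_+$ is expensive and therefore small on average. Since every nonzero frequency occurring in $S_+$ is at least $GK$, one has $\sum_{k\ge GK}a_k^2k^4\ge (GK)^2S_+$, so using Corollary~\ref{cor:bending},
$\int_0^{x_0}S_+(x)\dx\le (GK)^{-2}\int_0^{x_0}B(x)\dx\le C_2x_0/(GK)^2$ for every $x_0\in(0,1]$. Consequently the low part must carry essentially the whole constraint on average: $\int_0^{x_0}S_-(x)\dx\ge x_0^2-C_2x_0/(GK)^2$.

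Second, the low part $S_-$ cannot carry that much mass at scales $x\ll 1/K^2$, because the cascade of nonzero frequencies below the gap tops out at $K$. The building blocks here are: (i) by Claim~1 in the proof of Corollary~\ref{cor:muk:1}, for each nonzero $k\le K$ the function $a_k'$ is decreasing on $(0,1/k^2)$, so with $a_k(0)=0$ each $a_k$ is concave on $[0,1/k^2]\supseteq[0,1/K^2]$ and, together with $a_k^2k^2\le 2x$, is pinned by $a_k\le\sqrt 2/k^2$ there, so $S_-$ is concentrated near $x\sim1/K^2$; (ii) since by Lemma~\ref{lm:trivak} all nonzero frequencies satisfy $k\gtrsim1$, the low bending $B_-:=\sum_{k\le K}a_k^2k^4$ is comparable to $S_-$. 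The quantitative statement I would extract is that, for $x_0$ of order $1/K^2$, $\int_0^{x_0}S_-(x)\dx$ is smaller than $x_0^2$ by a definite amount. The natural way to prove this is a competitor argument: keep $u$ for $x\ge x_0$, and on $[0,x_0]$ replace the low-frequency part of $u$ by a fresh cascade furnished by Lemma~\ref{lm:construction} (with $b\sim x_0$), glued to $u$ at $x_0$ and recombined with the (unchanged) high-frequency part via Lemma~\ref{lm:sublinear}; by Lemma~\ref{lm:relaxed:constrain} the relaxed constraint $\sum a_k^2k^2\ge 2x$ is enough, and minimality of $u$ then forces the original low-frequency block on $[0,x_0]$ to be essentially a genuine cascade built from frequencies $\le K$, which by scaling contributes only $O(x^2)$ to $S_-$ for $x$ below $\sim1/K^2$.

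Combining the two facts at $x_0\sim1/K^2$ gives $\tfrac{1}{K^4}\lesssim x_0^2 - C_2x_0/(GK)^2 \le \int_0^{x_0}S_-\lesssim \tfrac{1}{G^2K^4}+(\text{the competitor deficit})$, which is impossible once $G$ exceeds a universal threshold; that threshold is $C_{gap}$. The step I expect to be the main obstacle is exactly the second one: making precise, with a constant independent of $L$, that the frequencies $\le K$ cannot jointly carry an order-$x$ amount of the constraint for $x$ well below $1/K^2$. A frequency-by-frequency estimate is useless here, since there may be many nonzero low frequencies (the naive bound produces the $L$-dependent sum $\sum_{k\le K}k^{-2}$), so one is forced to control the whole low-frequency block at once through the global optimality of $u$, via the gap-filling competitor together with Lemmas~\ref{lm:sublinear} and~\ref{lm:relaxed:constrain}. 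A possible cleaner alternative would be to use the Euler--Lagrange structure directly: the monotonicity $\mu_{GK}<\mu_K$ on $[1/(GK)^2,1)$ from Lemma~\ref{lm:muk:monotone}, together with the bound $\mu_{GK}\le1-\Delta(l)$ on $[4/(GK)^2,\min(l/(GK)^2,1)]$ from Corollary~\ref{cor:muk:bound} (with $l\sim G^2$), localizes the range of $x$ on which $a_{GK}$ is forced to carry the mass, after which the bending bound Corollary~\ref{cor:bending} on that range again forces $G$ to be bounded.
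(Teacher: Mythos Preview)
Your opening move---splitting $2x=S_-(x)+S_+(x)$ across the gap and bounding the high part via Corollary~\ref{cor:bending}---is exactly how the paper begins. The paper also works at the scale $X\sim (GK)^{-2}$ and finds a point $\bar x\in[X/2,X]$ where $S_+(2\bar x)\le 2\epsilon\bar x$, hence $S_-(2\bar x)\ge 2\bar x(2-\epsilon)$. So far so good.

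The gap in your proposal is the second half, and you identify it yourself: you need an \emph{upper} bound on the low-frequency contribution at scales $x\ll 1/K^2$, and neither of your two sketched routes actually delivers one. The competitor argument you outline (replace the low block on $[0,x_0]$ by a Lemma~\ref{lm:construction} cascade and invoke minimality) yields an \emph{energy} inequality of the Lemma~\ref{lm:bending:estimate} type, not a pointwise or integrated bound on $S_-$; in particular it does not prevent $S_-$ from being $\approx 2x$ on $[0,x_0]$, which is precisely what the first half forces. Your alternative via Corollary~\ref{cor:muk:bound} with $l\sim G^2$ gives only $\mu_{GK}\le 1-\Delta(l)$ with $\Delta(l)\sim 1/G^2\to 0$, so it does not localize anything once $G$ is large. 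Also, the scale at which you propose to combine the two facts drifts between $x_0\sim 1/K^2$ and $x\ll 1/K^2$; at the former, an upper bound $\int_0^{x_0}S_-\ll x_0^2$ is simply false in general.

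The paper closes the argument differently: it does \emph{not} try to bound $S_-$ from above. Instead it exploits the three values $S_-(\bar x)\le 2\bar x$, $S_-(2\bar x)\ge 2\bar x(2-\epsilon)$, $S_-(3\bar x)\le 6\bar x$ to find a single frequency $k_0\le K$ with a second-difference anomaly
\[
a_{k_0}(\bar x)+a_{k_0}(3\bar x)<(2-\delta)\,a_{k_0}(2\bar x).
\]
Since $[0,4\bar x]\subset[0,1/k_0^2]$ (this is where $G$ large is used), $a_{k_0}'$ is nonincreasing there, and the Euler--Lagrange relation \eqref{ELint} converts the anomaly into a \emph{lower} bound on the Lagrange multiplier:
\[
\lambda([\bar x,3\bar x))\ \ge\ \frac{\delta}{2\bar x\,k_0^2}.
\]
On the other hand, feeding $\mu_{GK}$ into \eqref{mu:repre} on $[\bar x,3\bar x]$ (where $|\mu_{GK}|<1$ by Corollary~\ref{cor:muk:1} since $\bar x\gtrsim(GK)^{-2}$) gives the \emph{upper} bound
\[
\lambda([\bar x,3\bar x))\ \le\ 3\bar x\,(GK)^2.
\]
Comparing the two yields $G^2\lesssim 1$, the desired contradiction. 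So the missing idea is not an upper bound on $S_-$, but a sandwich on $\lambda$: the low side of the gap forces $\lambda$ to be large at scale $\bar x$, while the high side forces it to be small.
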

{
\providecommand{\bx}{\bar x}
\begin{proof}
%  Outline of the proof:
%  \vskip 2cm
 Let $0 < \epsilon < C_2/2$ and $\delta>0$ be fixed such that  $2\epsilon + 4\delta < 2-\sqrt{3}$. For the contrary we assume that 
 \begin{equation}\label{cgap}
  G > C_{gap} := \max\{ 4\sqrt{2C_2 \epsilon^{-1}}, \sqrt{24 \delta^{-1}} C_2 \epsilon^{-1} \}.
 \end{equation}
 %and assume that $G > C_{gap}$. 
 We set 
 \begin{equation}\label{gap:1}
  X := \frac{2C_2}{\epsilon} \frac{1}{(GK)^2},
 \end{equation}
 and observe that by Lemma~\ref{lm:trivak} $K \ge (1/2)^{1/4} \ge 1/2$, and so~\eqref{cgap} implies $X \le 1/4$. 

 By Corollary~\ref{cor:bending}
 \begin{multline}\nonumber
  C_2 \cdot 2X \ge \int_X^{2X} B(x) \dx \ge \int_X^{2X} \sum_{k\in\Ip,k \ge GK} a_k^2(x) k^4 \dx
\\ \ge (GK)^2 \int_X^{2X} \sum_{k \in \Ip, k \ge GK} a_k^2(x) k^2 \dx \ge (GK)^2 X \min_{x \in [X,2X]} \sum_{k \in \Ip,k \ge GK} a_k^2(x) k^2,
 \end{multline}
 %and by~\eqref{gap:1}
 so that by definition of $X$,
 \begin{equation}\nonumber
  \min_{x \in [X,2X]} \sum_{k \in\Ip, k \ge GK} a_k^2(x) k^2 \le \frac{2C_2}{(GK)^2} = \epsilon X.
 \end{equation}
 Hence, there exists $\bx \in [X/2,X]$ such that $\sum_{k \in \Ip, k \ge GK} a_k^2(2\bx) k^2 \le \epsilon X \le 2 \epsilon \bx$.  It follows from the constraint~\eqref{constr:f} that $\sum_{k\in\Ip} a_k^2(2\bx) k^2 = 4\bx$, and so $a_k \equiv 0$ for $k \in \Ip, K < k < GK$ implies
 \begin{equation}\label{gap:2}
  \sum_{k\in\Ip,k \le K} a_k^2(2\bar x) k^2 \ge 2\bx (2-\epsilon).
 \end{equation}
 Moreover, relation~\eqref{constr:f} implies
 \begin{equation}\label{gap:3}
  \sum_{k\in\Ip,k \le K} a_k^2(\bar x) k^2 \le 2\bx, \qquad \sum_{k\in\Ip,k \le K} a_k^2(3\bar x) k^2 \le 6\bx.
 \end{equation}
 We claim that there exists $k_0 \in \Ip, k_0 \le K$ such that
 \begin{equation}\label{gap:4}
  a_{k_0}(\bx) + a_{k_0}(3\bx) < (2-\delta) a_{k_0}(2\bx).
 \end{equation}
 Indeed, if this were not true, we would have the opposite inequality for all $k \in \Ip, k \le K$, and taking square of such relations, multiplying each by $k^2$, and summing them up would give
 \begin{equation}\nonumber
  \sum_{k\in\Ip,k\le K} a_k^2(\bx)k^2 + a_k^2(3\bx)k^2 + 2a_k(\bx)a_k(3\bx) k^2 \ge (2-\delta)^2 \sum_{k\in\Ip,k\le K} a_k^2(2\bx)k^2.
 \end{equation}
 Using Young's inequality we observe that $2a_k(\bx)a_k(3\bx) k^2 \le \sqrt{3} a_k^2(\bx)k^2 + a_k^2(3\bx)k^2/\sqrt{3}$, and so together with~\eqref{gap:2} and~\eqref{gap:3} we would get
 \begin{equation}\nonumber
  2\bx + 6\bx + 2\sqrt{3} \bx + 6\bx/\sqrt{3} \ge (2-\delta)^2 2\bx(2-\epsilon).
 \end{equation}
 After dividing both sides by $2\bx$, we would get $4 + 2\sqrt{3} \ge (4-4\delta)(2-\epsilon) \ge 8 - 4\epsilon - 8\delta$, which simplifies to $2\epsilon + 4\delta \ge 2 - \sqrt{3}$, a contradiction with the choice of $\epsilon$ and $\delta$. 

 Next we claim that $a_{k_0}'$ is non-increasing in the interval $[0,4\bx]$. 
%  \begin{equation}\label{ddakneg}
%   a_{k_0}'
%  \end{equation}
 To prove it, we observe that for $0 < x \le 4\bx$
 \begin{equation}\nonumber
  %\lambda(x) \overset{\eqref{lambda:ge}}{\ge} 
 x^{-1} \ge (4 X)^{-1} = \frac{1}{4} \frac{\epsilon}{2C_2} (GK)^2 \overset{\eqref{cgap}}{>} \frac{\epsilon}{8C_2} \frac{32C_2}{\epsilon} K^2 = 4 K^2 \overset{K\ge k_0}{\ge} 4k_0^2 > k_0^2,
 \end{equation}
 i.e. $[0,4\bar x] \subset [0,k_0^{-2}]$. Then as in~\eqref{75} we get for any $(\alpha,\beta) \subset [0,4\bx]$ 
 \begin{equation}\nonumber
  %a_{k_0}'(\alpha) - a_{k_0}'(\beta) = \int_\alpha^\beta a_{k_0}(x) k_0^2 \ud \lambda - \int_\alpha^\beta a_{k_0}(x) k_0^4 \dx \overset{\eqref{lambda:ge}}{\ge} 0.
  a_{k_0}'(\alpha) - a_{k_0}'(\beta) \ge 0.
 \end{equation}

 %and so~\eqref{EL} implies $a_{k_0}''(x) = a_k(x_0) k_0^2 (k_0^2 - \lambda(x)) \le 0$. 
 
 By~\eqref{75} we obtain
%As a next step, we integrate Euler-Lagrange equation~\eqref{EL} for $a_{k_0}$ to get
 \begin{equation}\label{gap:5}
  a_{k_0}'(3\bx) - a_{k_0}'(\bx) = \int_{\bx}^{3\bx} a_{k_0} k_0^4 \dx - \int_{\bx}^{3\bx} a_{k_0} k_0^2 \ud \lambda \overset{a_{k_0}\ge 0}{\ge} -\int_{\bx}^{3\bx} a_{k_0}(x) k_0^2 \ud \lambda.
 \end{equation}
 %The left-hand side is equal to $a'_{k_0}(3\bx) - a'_{k_0}(\bx)$, and using $a_{k_0}'' \le 0$ together with Taylor expansion it can be estimated from above:
 Using monotonicity of $a_{k_0}'$ in $[0,4\bx]$ we can estimate the left-hand side of~\eqref{gap:5}
  \begin{equation}\label{gap:6}
   a'_{k_0}(3\bx) - a'_{k_0}(\bx) \le \frac{a_{k_0}(\bx) + a_{k_0}(3\bx) - 2a_{k_0}(2\bx)}{\bx} \overset{\eqref{gap:4}}{<} -\frac{\delta a_{k_0}(2\bx)}{\bx}.
  \end{equation}

  Next, we observe that 
  \begin{equation}\label{gap:7}
    a_{k_0}(x) \le 2a_{k_0}(2\bx), \quad x \in [\bx,3\bx].
  \end{equation}
  Indeed, $a_{k_0}'$ being non-increasing in $(0,4\bx)$ implies that $a_{k_0}$ is concave in $(0,4\bx)$, and so $a_{k_0}(x) > 2a_{k_0}(2\bx)$ for some $x \in [\bx,3\bx]$ would imply either $a_{k_0}(0) < 0$ or $a_{k_0}(4\bx) < 0$ -- a contradiction. 
  We combine~\eqref{gap:5} with~\eqref{gap:6} and use~\eqref{gap:7} to get
  \begin{equation}\label{gap:8}
   \lambda([\bx,3\bx)) \dx \ge \frac{\delta}{2\bx k_0^2}.
  \end{equation}

  To obtain the upper bound on $\lambda([\bx,3\bx))$, we use~\eqref{mu:repre} to show
%integrate~\eqref{ELmu} for $k=GK$ over $(\bx,3\bx)$:
  \begin{equation}\nonumber
   \mu_{GK}(3\bx) - \mu_{GK}(\bx) = \int_{\bx}^{3\bx} (GK)^2 (1-\mu_{GK}^2(x)) \dx - \lambda([\bx,3\bx)).
  \end{equation}
  Since $\bx \ge X/2 = (C_2/\epsilon) (GK)^{-2} > 2(GK)^{-2}$, by Corollary~\ref{cor:muk:1} the left-hand side of the above relation is bounded by $2$ and $\left|1-\mu_{GK}^2(x)\right| \le 1$ for $x \ge \bx$. Hence
  \begin{equation}\nonumber
   \lambda([\bx,3\bx)) \le 2 + (GK)^2 2\bx \le 3\bx (GK)^2,
  \end{equation}
  where the last inequality follows from $\bx > 2(GK)^{-2}$. 
  The previous estimate together with~\eqref{gap:8} yields
  \begin{equation}\nonumber
   \frac{\delta}{2\bx k_0^2} \le 3\bx (GK)^{2}.
  \end{equation}
  Since $k_0 \le K$ and $\bx \le X = 2C_2\epsilon^{-1} (GK)^{-2}$, a simple algebraic manipulation implies bound on $G$:
  \begin{equation}\nonumber
   G^2 \le \frac{24 C_2^2}{\delta \epsilon^2} \le C_{gap}^2,
  \end{equation}
  a contradiction with~\eqref{cgap}.
\end{proof}
}

In Proposition~\ref{prop:gap} we showed that the ``gap'' $G$ between two non-zero frequencies $a_K \not\equiv 0$, $a_{GK} \not\equiv$ can not be large. For this to be useful it remains to show that the smallest non-zero frequency $k \in \Ip$ for which $a_k \not \equiv 0$ is not very large:

\begin{lemma}\label{lm:nontrivak}
 Let $u \in \A$ be an odd minimizer of $\S$, and $\{a_k\}_{k \in \Ip}$ be the corresponding Fourier coefficients. Then 
 \begin{equation}\nonumber
  \min \{k \in \Ip: a_k \not\equiv 0\} \le \sqrt{\sigma_L} \le 2\sqrt{\sigma_1}.
 \end{equation} 
\end{lemma}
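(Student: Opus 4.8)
The plan is to argue directly from the constraint~\eqref{constr:f} and the bending part of the energy, with no need for the $\mu_k$ machinery. First I would set
\[
 k_0 := \min\{k \in \Ip : a_k \not\equiv 0\}.
\]
This set is non-empty and $k_0 \ge \pi/L > 0$: if every $a_k$ vanished identically, then~\eqref{constr:f} would give $0 = 2x$ for all $x \in [0,1]$, which is absurd. So $k_0$ is well-defined, and our goal is to show $\sigma_L = \S(u) \ge k_0^2$.

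The key step is a pointwise lower bound on $B(x) = \sum_{k\in\Ip} a_k^2(x)k^4$. Since $a_k \not\equiv 0$ implies $k \ge k_0$ (and since by Corollary~\ref{cor:ak:positive} the terms with $a_k \equiv 0$ contribute nothing), every nonzero term satisfies $k^4 = k^2\cdot k^2 \ge k_0^2 k^2$, so that, using the constraint~\eqref{constr:f},
\[
 B(x) = \sum_{k\in\Ip} a_k^2(x) k^4 \;\ge\; k_0^2 \sum_{k\in\Ip} a_k^2(x) k^2 \;=\; 2 k_0^2 x, \qquad x \in [0,1].
\]

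Now I would simply drop the non-negative membrane term $\sum_{k} a_k'^2$ from the energy and integrate the previous inequality:
\[
 \sigma_L = \S(u) = \int_0^1 \sum_{k\in\Ip} \big(a_k'^2(x) + a_k^2(x) k^4\big)\,\dx \;\ge\; \int_0^1 B(x)\,\dx \;\ge\; k_0^2 \int_0^1 2x\,\dx = k_0^2,
\]
which gives $k_0 \le \sqrt{\sigma_L}$. The second inequality $\sqrt{\sigma_L}\le 2\sqrt{\sigma_1}$ is then immediate from~\eqref{sigmale}, which states $\sup_{L\ge1}\sigma_L \le 4\sigma_1$. There is no real obstacle here: the claim is a one-line consequence of the constraint together with the trivial bound $\S(u)\ge \int_0^1 B$. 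The only point requiring a word of justification is that $k_0$ is well-defined, i.e. that not all Fourier coefficients vanish, which as noted follows directly from~\eqref{constr:f}.
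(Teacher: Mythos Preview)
Your proof is correct and follows essentially the same approach as the paper: drop the membrane term, use $k^4 \ge k_0^2 k^2$ for the surviving frequencies, apply the constraint~\eqref{constr:f} to get $\sum_k a_k^2 k^2 = 2x$, and integrate to obtain $\sigma_L \ge k_0^2$; the second inequality is~\eqref{sigmale}. The only addition is your explicit remark that $k_0$ is well-defined, which is harmless.
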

 
\begin{proof} 
 Let $K :=  \min \{k \in \Ip: a_k \not\equiv 0\}$, and the first inequality follows
 \begin{equation}\nonumber
  \sigma_L \ge \int_0^1 \sum_{k\in\Ip} a_k^2(x) k^4 \dx \ge K^2 \int_0^1 \sum_{k \in \Ip} a_k^2(x) k^2 \dx = K^2 \int_0^1 2x \dx = K^2.
 \end{equation}
 The second inequality is~\eqref{sigmale}.
\end{proof}

It follows from~\eqref{lambda:ge} that $\lambda([x_0,2x_0)) \dx \gtrsim 1$. To show the exponential decay of $a_k$ we require a similar upper bound on the Lagrange multiplier $\lambda$:

\begin{lemma}[Upper bound on $\lambda$ on dyadic intervals]\label{lm:lambda:ub}
 There exists a universal constant $C_3$ such that for any $x_0 \in (0,1/2]$ we have
 \begin{equation}\nonumber
  \lambda([x_0,2x_0)) \dx \le C_3.
 \end{equation}
\end{lemma}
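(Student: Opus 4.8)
The plan is to extract, for each $x_0$, a single non-vanishing Fourier mode $k \in \Ip$ with $k^2$ comparable to $1/x_0$, and then read off the bound on $\lambda([x_0,2x_0))$ directly from the first-order equation~\eqref{ELmu} satisfied by $\mu_k$. Suppose we have produced $k$ with $a_k\not\equiv 0$ and $1/x_0 \le k^2 \le C'/x_0$ for some universal $C'$. Since $x_0 \le 1/2$ we have $[x_0,2x_0] \subset [1/k^2,1]$, so Corollary~\ref{cor:muk:1} gives $-1 < \mu_k(x) < 1$ there. Integrating $\mu_k' = k^2(1-\mu_k^2) - \lambda$ over $[x_0,2x_0)$ with the left-continuous representative of $\mu_k$ (as in~\eqref{mu:repre}) yields
\begin{equation}\nonumber
 \lambda\big([x_0,2x_0)\big) = \int_{x_0}^{2x_0} k^2\big(1-\mu_k^2(x)\big)\dx + \mu_k(x_0) - \mu_k(2x_0) \le k^2 x_0 + 2 \le C' + 2,
\end{equation}
using $0\le 1-\mu_k^2\le 1$ and $|\mu_k|<1$ on $[x_0,2x_0]$. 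So the lemma reduces entirely to producing such a $k$ with a uniform $C'$.

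\textbf{Producing the mode.} Write $S := \{k\in\Ip : a_k\not\equiv 0\}$. I would invoke two facts already at hand: $(\min S)^2 \le \sigma_L \le 4\sigma_1$ (Lemma~\ref{lm:nontrivak} together with~\eqref{sigmale}), and the gap bound of Proposition~\ref{prop:gap} (consecutive elements of $S$ differ by a factor $\le C_{gap}$). To these I would add that $S$ is unbounded: differentiating the constraint~\eqref{constr:f} gives $\sum_{k\in\Ip} a_k'a_k k^2 = 1$ a.e., hence by Cauchy--Schwarz and~\eqref{constr:f}, $\sum_{k\in\Ip} a_k'^2 k^2 \ge (2x)^{-1}$ a.e.; were $S$ bounded by $K_{\max}$, this would force $\sum_{k\in\Ip} a_k'^2 \ge (2 K_{\max}^2 x)^{-1}$ and thus $\S(u) = \infty$, a contradiction (and $S\subset\Ip$ is discrete, so it is unbounded). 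Given $x_0\in(0,1/2]$ (so $1/x_0\ge 2$), set $k_* := \min\{k\in S : k^2 \ge 1/x_0\}$, which is well defined by unboundedness. If $k_* = \min S$, then $k_*^2 \le 4\sigma_1 \le 2\sigma_1/x_0$; otherwise the predecessor $\tilde k$ of $k_*$ in $S$ satisfies $\tilde k^2 < 1/x_0$ by minimality of $k_*$, with no element of $S$ strictly between $\tilde k$ and $k_*$, so Proposition~\ref{prop:gap} gives $k_*^2 \le C_{gap}^2 \tilde k^2 < C_{gap}^2/x_0$. Either way $1/x_0 \le k_*^2 \le C'/x_0$ with $C' := \max(2\sigma_1, C_{gap}^2)$, and applying the first paragraph to $k = k_*$ finishes the proof with $C_3 := C' + 2$, a universal constant.

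\textbf{Main obstacle.} Essentially every estimate is supplied by earlier results (Corollary~\ref{cor:muk:1}, Proposition~\ref{prop:gap}, Lemma~\ref{lm:nontrivak}); the one genuinely new ingredient is the unboundedness of $S$, which is exactly what is needed to handle small $x_0$, where $1/x_0$ outruns any a priori controlled frequency. I expect that point — together with the routine bookkeeping needed to keep $C_3$ independent of $L$ — to be the only part requiring care.
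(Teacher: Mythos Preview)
Your proof is correct and follows exactly the same approach as the paper: select a non-vanishing mode $k$ with $k^2$ comparable to $1/x_0$ via Proposition~\ref{prop:gap} and Lemma~\ref{lm:nontrivak}, then integrate~\eqref{ELmu} over $[x_0,2x_0)$ using Corollary~\ref{cor:muk:1}. The only difference is that you spell out the unboundedness of $S$ (which the paper leaves implicit in the phrase ``we can find \ldots\ $k$ such that $1/k^2 \le x_0$''); your argument for it is clean and correct.
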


\begin{proof}
  By Proposition~\ref{prop:gap} and Lemma~\ref{lm:nontrivak} we can find universal constant $\bar C < \infty$ such that  for every $x_0 \in (0,1/2]$ there exists $k$ such that $a_k \not\equiv 0$ and $1/k^2 \le x_0 \le \bar C/k^2$. Since $x_0 \ge 1/k^2$, by Corollary~\ref{cor:muk:1} we know that $\mu_k(x) \in (-1,1)$ for $x \in [x_0,2x_0]$. Then~\eqref{ELmu} implies
  \begin{equation}\nonumber
   \lambda([x_0,2x_0)) \dx \le \int_{x_0}^{2x_0} k^2 (1-\mu_k^2(x)) \dx + 2 \le k^2 x_0 + 2 \le \bar C + 2.
  \end{equation}
\end{proof}

As an immediate corollary we obtain
\begin{cor}\label{cor:lambda:ub}
 For any $x_0 \in (0,1)$ we have
 \begin{equation}
  \lambda([x_0,1)) \le C_3(|\log_2 x_0|+1).
 \end{equation}
\end{cor}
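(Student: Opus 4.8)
The plan is to obtain Corollary~\ref{cor:lambda:ub} directly from Lemma~\ref{lm:lambda:ub} by summing the dyadic bound $\lambda([x_0,2x_0)) \le C_3$ over the dyadic scales lying between $x_0$ and $1$. So I would fix $x_0 \in (0,1)$ and pick the unique integer $N \ge 0$ with $2^{-(N+1)} \le x_0 < 2^{-N}$; equivalently $N = \lfloor |\log_2 x_0| \rfloor$, so in particular $N \le |\log_2 x_0|$, and $N=0$ exactly when $x_0 \in [1/2,1)$.

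Next I would decompose $[x_0,1)$ as the disjoint union $[x_0,2^{-N}) \cup \bigcup_{j=1}^{N} [2^{-j},2^{-j+1})$, where the union over $j$ is empty when $N=0$. For each $j \in \{1,\dots,N\}$ the left endpoint satisfies $2^{-j} \le 1/2$, so Lemma~\ref{lm:lambda:ub} (with $x_0$ there replaced by $2^{-j}$) gives $\lambda([2^{-j},2^{-j+1})) \le C_3$. For the leftmost piece I would use the inclusion $[x_0,2^{-N}) \subseteq [2^{-(N+1)},2^{-N})$ together with Lemma~\ref{lm:lambda:ub} applied to $2^{-(N+1)} \in (0,1/2]$, which yields $\lambda([x_0,2^{-N})) \le C_3$. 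Since $\lambda$ is a non-negative measure, additivity over this decomposition into $N+1$ pieces gives $\lambda([x_0,1)) \le (N+1)C_3 \le C_3(|\log_2 x_0|+1)$, which is the claim; the constant $C_3$ is literally the one from Lemma~\ref{lm:lambda:ub}.

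There is no genuine obstacle here. The only points requiring a line of care are that every dyadic interval appearing in the decomposition has left endpoint $\le 1/2$, so that Lemma~\ref{lm:lambda:ub} is legitimately applicable to it, and the degenerate case $x_0 \in [1/2,1)$, where the decomposition collapses to the single interval $[x_0,1) \subseteq [1/2,1)$ and one still concludes $\lambda([x_0,1)) \le C_3 = (N+1)C_3$ with $N=0$. Hence the statement follows, as advertised, \emph{immediately} from the preceding lemma.
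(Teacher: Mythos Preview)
Your argument is correct and is exactly the dyadic summation the paper has in mind when it calls this an ``immediate corollary'' of Lemma~\ref{lm:lambda:ub}; the paper gives no further details. Your handling of the endpoint piece $[x_0,2^{-N})$ via the inclusion into $[2^{-(N+1)},2^{-N})$ and the degenerate case $N=0$ are both fine.
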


The following lemma implies the exponential decay of $a_k$:

\begin{lemma}\label{lm:muk:decay}
 There exists a universal constant $C_4$ such that for any $0 < \Delta \le 1/2$ and any frequency $k \in \Ip, a_k \not\equiv 0$  we have 
 \begin{equation}\nonumber
  \left| \left\{ x \in (0,1] : \mu_k(x) \ge -1 + \Delta \right\} \right| \le C_4 \frac{\ln(k)+1}{\Delta k^2}.
 \end{equation}
\end{lemma}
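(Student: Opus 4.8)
The idea is to integrate the ODE~\eqref{ELmu} for $\mu_k$ over the (super)level set $\{\mu_k \ge -1+\Delta\}$ and play the resulting identity against the upper bound for $\lambda$ from Corollary~\ref{cor:lambda:ub}. Fix $k$ with $a_k \not\equiv 0$ and write $S_\Delta := \{x \in (0,1] : \mu_k(x) \ge -1+\Delta\}$. On $S_\Delta$, combining $\mu_k \ge -1+\Delta$ with the upper bound $\mu_k \le 1$ valid on $[1/k^2,1]$ (Corollary~\ref{cor:muk:1}), we have $1-\mu_k^2 = (1-\mu_k)(1+\mu_k) \ge 0$, and more importantly $1+\mu_k \ge \Delta$, so $1-\mu_k^2 \ge \Delta(1-\mu_k) \ge 0$; but we want a *lower* bound that is not just $0$. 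The better route is: on $S_\Delta \cap [1/k^2,1]$ we have $\mu_k \in [-1+\Delta, 1]$; since $1-\mu_k^2$ attains its minimum on this interval at the endpoints, and at $\mu_k = 1$ it is $0$, this does not immediately work either. So instead one should split according to whether $\mu_k$ is close to $+1$ or stays bounded away.

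Here the key extra input is the monotonicity/comparison machinery: by Lemma~\ref{lm:muk:monotone}, $\mu_k \le \mu_m$ for any non-vanishing lower frequency $m<k$ on $[1/k^2,1)$, and by Lemma~\ref{lm:nontrivak} and Proposition~\ref{prop:gap} there is a non-vanishing frequency $m$ of size comparable to a universal constant (so $\mu_k \le \mu_m \le f_m$, and by Lemma~\ref{lm:faway1} applied to the rescaled $f_m$ we get $\mu_k(x) \le 1 - \Delta_0$ for $x$ in a range $[\,c_1/k^2,\,c_2\,]$ with universal constants — cf.\ Corollary~\ref{cor:muk:bound}). Thus on a universal-length portion near the top of $(0,1]$, $\mu_k$ is uniformly bounded away from $1$. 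More globally: for $x \ge 1/k^2$, picking the non-vanishing frequency $m_x$ with $1/m_x^2 \le x \le \bar C/m_x^2$ (as in Lemma~\ref{lm:lambda:ub}), we get $\mu_k(x) \le \mu_{m_x}(x) \le f_{m_x}(x) \le \sqrt{1 - 1/(m_x^2 x)} \le \sqrt{1-1/\bar C} =: 1 - \Delta_1$, a universal gap. Hence on $S_\Delta \cap [1/k^2,1]$ we actually have $\mu_k \in [-1+\Delta,\, 1-\Delta_1]$, so $1-\mu_k^2 \ge \min(\Delta,\Delta_1)(2-\Delta-\Delta_1) \gtrsim \Delta$ (using $\Delta \le 1/2$).

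Now integrate the distributional ODE $\mu_k' = k^2(1-\mu_k^2) - \lambda$ from~\eqref{ELmu} over $S_\Delta \cap [1/k^2,1]$. Since $\mu_k$ stays in the bounded range $[-1,1]$ on all of $[1/k^2,1]$ (Corollary~\ref{cor:muk:1}), the total variation of $\mu_k$ on that interval contributed by the positive part of $\mu_k'$ is at most $\lambda([1/k^2,1]) + 2 \le C_3(|\log_2(1/k^2)|+1)+2 \lesssim \ln k + 1$ by Corollary~\ref{cor:lambda:ub}. On the other hand, on $S_\Delta \cap [1/k^2,1]$ the equation gives
\begin{equation}\nonumber
 k^2 \int_{S_\Delta \cap [1/k^2,1]} (1-\mu_k^2(x))\dx = \int_{S_\Delta\cap[1/k^2,1]} \mu_k'(x)\,\dx + \int_{S_\Delta\cap[1/k^2,1]} \ud\lambda(x),
\end{equation}
and estimating the left side from below by $c\,\Delta\, k^2\, |S_\Delta\cap[1/k^2,1]|$ and the right side from above by $\lambda([1/k^2,1]) + (\text{positive variation of }\mu_k) + \lambda([1/k^2,1]) \lesssim \ln k + 1$ yields $|S_\Delta \cap [1/k^2,1]| \lesssim (\ln k+1)/(\Delta k^2)$. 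Finally the remaining piece $S_\Delta \cap (0,1/k^2)$ has measure at most $1/k^2 \le (\ln k+1)/(\Delta k^2)$ trivially (since $\Delta \le 1/2$), so adding the two pieces gives the claimed bound with a universal $C_4$.

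The main obstacle — and the step deserving the most care — is making rigorous the lower bound $1-\mu_k^2 \gtrsim \Delta$ on the superlevel set, i.e.\ genuinely excluding the regime where $\mu_k$ hugs $+1$. This requires combining the comparison Lemma~\ref{lm:muk:monotone} with the existence of non-vanishing frequencies at every dyadic scale (Proposition~\ref{prop:gap}, Lemma~\ref{lm:nontrivak}) and the barrier estimate of Lemma~\ref{lm:faway1}; one must check the scaling ($f_k(x) = f_1(k^2 x)$-type relation) carefully so that the universal gap $\Delta_1$ truly does not depend on $k$. The bookkeeping with the left-continuous representative and the fact that $\mu_k'$ is a signed measure (so "integrating the ODE over a measurable set" means integrating the density $k^2(1-\mu_k^2)$ against Lebesgue measure and $\lambda$ against itself, while $\int_{S_\Delta}\mu_k' $ is controlled by total variation) also needs a clean statement, but that is routine once the level-set gap is in hand.
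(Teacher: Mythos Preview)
Your approach is essentially the same as the paper's: establish a universal upper gap $\mu_k\le 1-\Delta_1$ on (most of) $[c/k^2,1]$ by combining the covering from Proposition~\ref{prop:gap} and Lemma~\ref{lm:nontrivak} with the comparison Lemma~\ref{lm:muk:monotone} and the barrier of Corollary~\ref{cor:muk:bound}, then integrate the ODE and invoke Corollary~\ref{cor:lambda:ub}. Two small points where the paper is slightly cleaner: (i) it proves the covering $(0,1]\subset\bigcup_{a_m\not\equiv 0}[4m^{-2},lm^{-2}]$ and works on $[4k^{-2},1]$, which automatically guarantees the comparison frequency satisfies $m\le k$ (your choice of $m_x$ via Lemma~\ref{lm:lambda:ub} does not, without shifting the cutoff to $\bar C/k^2$); (ii) rather than integrating the ODE over the superlevel set $S_\Delta$ and controlling $\int_{S_\Delta}\mu_k'$ by total variation, the paper integrates over the \emph{whole} interval $[4k^{-2},1]$, uses $1-\mu_k^2\ge 0$ there to drop the complement, and bounds the left side by $|\mu_k(1)-\mu_k(4k^{-2})|\le 2$---this sidesteps the measure-theoretic bookkeeping entirely. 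Both differences are cosmetic; your plan is correct.
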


\begin{proof}
 We claim that for $l := \max(4C_{gap}^2,4\sigma_1)$ 
 \begin{equation}\label{claimM}
  (0,1] \subset M := \bigcup_{k \in \Ip, a_k \not\equiv 0} [4k^{-2},lk^{-2}].
 \end{equation}
 Indeed, first we see that 
 \begin{equation}\nonumber
  \max M = \min_{k \in \Ip : a_k \not\equiv 0 } lk^{-2} \overset{Lm~\ref{lm:nontrivak}}{\ge} l/(4\sigma_1) \ge 1. 
 \end{equation}
 Now let us assume that $\alpha := \inf\left\{ x \in [0,1] : (x,\max M] \subset M\right\} > 0$. Then there exists a frequency $k_1 \in \Ip, a_{k_1} \not\equiv 0$ such that $\alpha = 4k_1^{-2}$ and let $k_2 := \min\{ k \in \Ip : k > k_1, a_{k} \not\equiv 0\}$. By Proposition~\ref{prop:gap} we know that $k_2 \le C_{gap}k_1$, and so
 \begin{equation}\nonumber
  \frac{l}{k_2^2} \ge \frac{4 C_{gap}^2}{k_2^2} \ge \frac{4}{k_1^2} = \alpha,
 \end{equation}
 where we used that $l \ge 4C_{gap}^2$. Since $4 k_2^{-2} < \alpha$ and $[4 k_2^{-2},lk_2^{-2}] \cup [\alpha,\max M] = [4k_2^{-2},\max M] \subset M$, we obtain a contradiction. Thus $\alpha = 0$ and~\eqref{claimM} follows. 

 Let $\bar k \in \Ip, a_{\bar k} \not\equiv 0$. By~\eqref{claimM}, for any $x_0 \in [4\bar k^{-2},1]$ we can find $k_0 \in \Ip, a_{k_0} \not \equiv 0, k_0 \le \bar k$ such that $x_0 \in [4k_0^{-2},lk_0^{-2}]$. By Corollary~\ref{cor:muk:bound} there exists $\bar \Delta > 0$ (value of which depends on $l$) such that $\mu_k(x) \le 1 - \bar \Delta$ for any $k \in \Ip, a_k \not\equiv 0$ and $x \in [4k^{-2},lk^{-2}]$. Then, Lemma~\ref{lm:muk:monotone} implies that $\mu_{\bar k}(x_0) \le \mu_{k_0}(x_0) \le 1 - \bar \Delta$. 

 Let us first consider the case $0 < \Delta \le \bar \Delta$. Then we have $\{ x \in [4\bar k^{-2},1] : \mu_{\bar k}(x) \ge -1 + \Delta \} = \{ x \in [4\bar k^{-2},1] : \mu_{\bar k}^2 (x) \le (1 - \Delta)^2 \}$, and so
 \begin{equation}\label{decay:1}
  \bar k^2 \left| \left\{ x \in [4\bar k^{-2},1] : \mu_{\bar k}(x) \ge -1 + \Delta \right\} \right| \left( 1 - (1-\Delta)^2 \right) \le \int_{4\bar k^{-2}}^1 \bar k^2(1-\mu_{\bar k}^2(x)) \dx \le \lambda([4\bar k^{-2},1)) + 2,
 \end{equation}
 where the last inequality follows from \eqref{mu:repre} and Corollary~\ref{cor:muk:1}, which we used to estimate $|\mu_{\bar k}|$ by $1$. Then it follows from Corollary~\ref{cor:lambda:ub} that $\lambda([4\bar k^{-2},1)) \lesssim \ln \bar k + 1$, 
 thus~\eqref{decay:1} implies
 \begin{equation}\label{deltabar}
  \left| \left\{ x \in [0,1] : \mu_{\bar k}(x) \ge -1 + \Delta \right\} \right| \le  4\bar k^{-2} + \left| \left\{ x \in [4\bar k^{-2},1] : \mu_{\bar k}(x) \ge -1 + \Delta \right\} \right| \le \frac{\bar C_4}{\bar k^2 \Delta} \left( \ln \bar k + 1 \right).
 \end{equation}
 
 In the case $1/2 \ge \Delta > \bar \Delta$ we use~\eqref{deltabar} with $\Delta=\bar \Delta$ to show
 \begin{multline}\nonumber
  \left| \left\{ x \in [0,1] : \mu_{\bar k}(x) \ge -1 + \Delta \right\} \right| \le \left| \left\{ x \in [0,1] : \mu_{\bar k}(x) \ge -1 + \bar \Delta \right\} \right| 
  \\ \overset{\eqref{deltabar}}{\le} \frac{\bar C_4}{\bar k^2 \bar \Delta} \left( \ln \bar k + 1 \right) \le \frac{C_4}{\bar k^2 \Delta} \left( \ln \bar k + 1 \right),
 \end{multline}
 where $C_4 = \bar C_4 /(2\bar \Delta)$. 
\end{proof}

\subsection{Regularity estimates -- proof of Theorem~\ref{thm:u}}

We proved above that $\mu_k$ stays close to $-1$ ``most of the time,'' which means that $a_k$ exponentially decay ``most of the time.'' We will now use this fact to obtain estimates for higher derivatives of $u$. The proof is divided into seven steps.
% \begin{enumerate}
%  \item For any $N \in \mathbb{N}$ there exists constant $\bar C_N > 0$ such that
% \begin{equation}\nonumber
%   \lint \left(\frac{\partial^{N+1} u}{\partial y^{N+1}}(x,y)\right)^2 \dy = \sum_{k \in \Ip} a_k^2(x_1) k^{2+2N} \le \bar C_N x_1^{N-1} (|\ln x_1|^N+1).
% \end{equation}
% \item There exists a function $\lambda(x) \in L^1_{loc}((0,1])$ such that for any $[\alpha,\beta) \subset (0,1)$
%  \begin{equation}\nonumber
%   \lambda([\alpha,\beta)) = \int_\alpha^\beta \lambda(x) \dx. 
%  \end{equation}
% \item For any $x \in (0,1)$:
% \begin{equation}\nonumber
%   \lint u_{,x}^2(x,y) \dy = \sum_{k\in\Ip} a_k'^2(x) \lesssim |\ln x|+1.
% \end{equation}
% \item For any $x \in (0,1)$:
% \begin{align}\nonumber
%   \lint u_{,xy}^2(x_1,y) \dy = \sum_{k \in \Ip} a_k'^2(x_1) k^2 &\lesssim x_1^{-1} (|\ln x_1|^2+1),\\
%   \lint u_{,xyy}^2(x_1,y) \dy = \sum_{k \in \Ip} a_k'^2(x_1) k^4 &\lesssim x_1^{-2} (|\ln x_1|^3+1).\nonumber
% \end{align}
% \item For a.e. $x \in (0,1)$:
%  \begin{equation}\nonumber
%   \lambda(x) \lesssim x^{-1} (|\ln x|^3 + 1).
%  \end{equation}
% \item For a.e. $x \in (0,1)$
%  \begin{equation}\nonumber
%   \lint u_{,xx}^2(x,y) \dy =\sum_{k\in\Ip} a_k''^2(x) \lesssim x^{-2} (|\ln x|^7+1).
%  \end{equation}
% \item For every $x \in (0,1)$:
%  \begin{equation}\nonumber
%   \lint u^2(x,y) \dy =\sum_{k\in\Ip} a_k^2(x) \lesssim x^2 (|\ln x|+1).
%  \end{equation}
% \end{enumerate}

\medskip
{\bf Step 1:} In the following we use decay of $a_k$ to obtain estimates for the $L^2$ norms of the $y$-derivatives of $u$ of any order:
%~\eqref{reg5} we estimate $y$-derivatives of $u$ of any order:
\begin{lemma}\label{lm:reg1} For any $N \in \mathbb{N}$ there exists constant $\bar C_N > 0$ such that for any $x_1 \in (0,1]$:
\begin{equation}\label{cor19.1}
  \lint \left(\frac{\partial^{N+1} u}{\partial y^{N+1}}(x_1,y)\right)^2 \dy = \sum_{k \in \Ip} a_k^2(x_1) k^{2+2N} \le \bar C_N {x_1}^{1-N} (|\ln x_1|^N+1).
\end{equation}
 \end{lemma}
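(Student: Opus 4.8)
The plan is to estimate $\sum_{k\in\Ip}a_k^2(x_1)k^{2+2N}$ (restricting throughout to $k$ with $a_k\not\equiv 0$, since the other terms vanish) by splitting the sum at two thresholds, $K_1:=x_1^{-1/2}$ and $K_2:=x_1^{-1/2}\sqrt{A(|\ln x_1|+1)}$, where $A=A(N)$ is a large constant to be fixed at the end. The case $N=0$ is nothing but the constraint $\sum_{k}a_k^2(x_1)k^2=2x_1$ (see~\eqref{constr:f}), so I would assume $N\ge 1$.

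For the low band $k\le K_1$ I would use only the constraint: $\sum_{k\le K_1}a_k^2(x_1)k^{2+2N}\le K_1^{2N}\sum_{k}a_k^2(x_1)k^2=2x_1K_1^{2N}=2x_1^{1-N}$. The intermediate band $K_1<k\le K_2$ is handled the same way, $\sum_{K_1<k\le K_2}a_k^2(x_1)k^{2+2N}\le K_2^{2N}\cdot 2x_1=2A^N x_1^{1-N}(|\ln x_1|+1)^N$, and this piece produces the logarithmic factor in the statement.

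The real work is the tail $k>K_2$, where I must use exponential decay of $a_k$ — and use it carefully, since $\Ip$ has spacing $\pi/L$ and a naive summation of a Gaussian tail $\sum_{k\in\Ip}k^{P}e^{-ck^2x_1}$ would introduce an unwanted factor $L$. First, by Corollary~\ref{cor:bending}, $\int_0^{x_1/2}B(x)\,\dx\le C_2x_1/2$ (with $B$ as in~\eqref{def:B}), so there is a single point $x_*\in[x_1/4,x_1/2]$ — independent of $k$ — with $B(x_*)=\sum_k a_k^2(x_*)k^4\le 2C_2$ and $x_1-x_*\ge x_1/2$. For $k>K_2$ one has $[x_*,x_1]\subset[k^{-2},1]$ (true once $A\ge 4$), so $\mu_k<1$ there by Corollary~\ref{cor:muk:1}, while Lemma~\ref{lm:muk:decay} with $\Delta=\tfrac12$ gives $|\{x:\mu_k(x)\ge-\tfrac12\}|\le 2C_4(\ln k+1)/k^2$; combining these yields $\int_{x_*}^{x_1}\mu_k\le-\tfrac12(x_1-x_*)+3C_4(\ln k+1)/k^2$, hence by~\eqref{66-}
\begin{equation}\nonumber
 a_k^2(x_1)\le a_k^2(x_*)\,k^{6C_4}e^{6C_4}\,e^{-k^2(x_1-x_*)}.
\end{equation}
Choosing $A$ large (depending only on $N$ and the universal constant $C_4$) makes $k^{6C_4+2N-2}e^{6C_4}\le e^{k^2(x_1-x_*)}$ for every $k>K_2$: indeed $k^2(x_1-x_*)\gtrsim k^2x_1>A(|\ln x_1|+1)$ there, while $\ln k\lesssim|\ln x_1|+1$ on $k>K_2$, so the quotient $k^2(x_1-x_*)/\ln k$ can be made as large as desired. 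Consequently $a_k^2(x_1)k^{2+2N}\le a_k^2(x_*)k^4$ for each $k>K_2$, and summing term by term gives the $L$-free bound $\sum_{k>K_2}a_k^2(x_1)k^{2+2N}\le\sum_{k>K_2}a_k^2(x_*)k^4\le B(x_*)\le 2C_2$.

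Adding the three pieces yields $\sum_k a_k^2(x_1)k^{2+2N}\le 2x_1^{1-N}+2A^Nx_1^{1-N}(|\ln x_1|+1)^N+2C_2\le\bar C_N x_1^{1-N}(|\ln x_1|^N+1)$, using $x_1^{1-N}\ge 1$ for $N\ge 1$. The step I expect to cost the most care is precisely the $L$-uniformity of the tail: the whole argument hinges on placing $K_2$ at the logarithmic scale $x_1^{-1/2}\sqrt{|\ln x_1|}$ so that $k^2x_1$ already dominates $\ln k$ at $k=K_2$, after which one never sums the exponential but instead dominates each tail term by the corresponding term of $B(x_*)$, whose total is controlled $L$-independently by Corollary~\ref{cor:bending}. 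Verifying the bookkeeping ($[x_*,x_1]\subset[k^{-2},1]$ for $k>K_2$, and that the threshold comparison holds uniformly as $x_1\to 0$) is routine once $K_2$ is chosen this way.
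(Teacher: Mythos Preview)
Your proof is correct and follows essentially the same strategy as the paper: split at a threshold of order $x_1^{-1/2}\sqrt{|\ln x_1|}$, control low frequencies via the constraint~\eqref{constr:f} at $x_1$, and control high frequencies through the exponential decay furnished by Lemma~\ref{lm:muk:decay}. The paper's version is slightly leaner in the tail step: rather than invoking Corollary~\ref{cor:bending} to locate an $x_*$ with $B(x_*)\lesssim 1$, it takes $x_0=1/\kappa^2$ and uses the constraint $\sum_k a_k^2(x_0)k^2=2x_0$ directly, noting that the decay factor $(k/\kappa)^{2N}e^{-2\alpha(k/\kappa)^2}$ is bounded by $1$. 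Your detour through $B(x_*)$ works equally well and is a legitimate alternative.

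One wording issue: the claim ``$\ln k\lesssim|\ln x_1|+1$ on $k>K_2$'' is false as written, since $k$ is unbounded above. What you actually need (and what your next clause correctly identifies) is that $k\mapsto k^2x_1/\ln k$ is increasing on $[K_2,\infty)$ and already exceeds any prescribed constant at $k=K_2$ once $A$ is large; the inequality $k^{6C_4+2N-2}e^{6C_4}\le e^{k^2(x_1-x_*)}$ then propagates from the threshold to all larger $k$. Also, the first cut at $K_1$ is harmless but redundant, since both the low and intermediate bands are handled by the same constraint argument.
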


\begin{proof}
 Let $x_1 \in (0,1]$ be fixed, $\alpha > 0$ be large enough so that $f(t) := t^{2N}\exp(-2\alpha t) \le 1$ for $t \ge 1$, and $\kappa \ge 1$ is chosen so that $2(3 C_4 + \alpha) \frac{\ln \kappa + 1}{\kappa^2} + \frac{1}{\kappa^2} = x_1$. % (it will be clear later why we made these choices). 

We split~\eqref{cor19.1} into two pieces
 \begin{align}\label{cor19.2}
  \sum_{k \in \Ip} a_k^2(x_1) k^{2+2N} &= \sum_{k \in \Ip, k > \kappa} a_k^2(x_1) k^{2+2N} + \sum_{k \in \Ip, k \le \kappa} a_k^2(x_1) k^{2+2N}.
 \end{align} We start by estimating the first sum on the right-hand side. Let $x_0 := 1/\kappa^2 < x_1$. Then by Corollary~\ref{cor:muk:1} $\mu_k(x) < 1$ for $x \in (x_0,x_1)$ and any $k \in \Ip, k\ge \kappa$. Therefore, 
 \begin{equation}\nonumber
 \begin{aligned}
  \int_{x_0}^{x_1} \mu_k(x) \dx &= \int\limits_{x_0 < x < x_1 \atop \mu_k(x) \le -1/2} \mu_k(x) \dx + \int\limits_{x_0 < x < x_1 \atop -1/2 < \mu_k(x) \le 1} \mu_k(x) \dx 
\\ &\le -1/2 \left| \left\{ x \in (x_0,x_1) : \mu_k(x) \le -1/2 \right\}\right| + \left| \left\{ x \in (x_0,x_1) : -1/2 < \mu_k(x) \le 1 \right\}\right| 
\\ &= \frac{x_0 - x_1}{2} + 3/2 \left| \left\{ x \in (x_0,x_1) : -1/2 < \mu_k(x) \le 1 \right\}\right| 
\\ &\overset{\textrm{Lm~\ref{lm:muk:decay}}}{\le} \frac{x_0 - x_1}{2} + \frac{3C_4(\ln k + 1)}{k^2} \le \frac{x_0 - x_1}{2} + \frac{3C_4(\ln \kappa + 1)}{\kappa^2} = - \alpha \frac{\ln \kappa + 1}{\kappa^2} \le -\alpha \kappa^{-2},
 \end{aligned}
 \end{equation}
where we used that the function $t \mapsto t^{-2}(\ln t + 1)$ is decreasing for $t \ge 1$. 
%It follows from the definition of $\mu_k$ (see \eqref{eq:mu}) that $\displaystyle a_k^2(x_1) = a_k^2(x_0) \exp \left(2k^2 \int_{x_0}^{x_1} \mu_k(x) \dx\right)$, and so 
Pluging the previous inequality into $\displaystyle a_k^2(x_1) = a_k^2(x_0) \exp \left(2k^2 \int_{x_0}^{x_1} \mu_k(x) \dx\right)$ (see~\eqref{66-}) gives
\begin{equation}\label{cor19.3}
 \sum_{k \in \Ip, k > \kappa} a_k^2(x_1) k^{2+2N} \le \sum_{k \in \Ip,k > \kappa} a_k^2(x_0) k^2 \underbrace{\left( \frac{k}{\kappa} \right)^{2N} \exp\left(-2\alpha k^2 \kappa^{-2} \right)}_{\le 1 \textrm{ since } k/\kappa\ge 1} \kappa^{2N} \overset{\eqref{constr:f}}{\le} 2x_0 \kappa^{2N} = 2\kappa^{2(N-1)}.
\end{equation}
%where the last inequality follows from~\eqref{constr:f}, the fact that $k \ge K$, and choice of $\alpha$. 
To estimate the second sum on the right-hand side of~\eqref{cor19.2} we just use~\eqref{constr:f} to get
\begin{equation}\label{cor19.4}
 \sum_{k \in \Ip, k\le \kappa} a_k^2(x_1) k^{2+2N} \le \sum_{k \in \Ip} a_k^2(x_1) k^2 \kappa^{2N} = x_1 \kappa^{2N}. 
\end{equation}
To finish, we observe that for $\beta$ large enough (depending on $\alpha$, which depends on $N$) $\kappa_0 := \beta x_1^{-1/2} (|\ln x_1|+1)^{1/2}$ satisfies $2(3C_4 + \alpha)\frac{\ln \kappa_0 + 1}{\kappa_0^2}+\frac{1}{\kappa_0^{2}} < x_1$, which implies $\kappa \le \kappa_0$. By plugging~\eqref{cor19.3} and~\eqref{cor19.4} into~\eqref{cor19.2} and using $\kappa \le \beta x_1^{-1/2} (|\ln x_1|+1)^{1/2}$ we obtain
\begin{multline}\nonumber
 \sum_{k\in\Ip} a_k^2(x_1) k^{2+2N} \le x_1 \kappa^{2N} + 2\kappa^{2(N-1)} \lesssim x_1 \left( \beta^2 x_1^{-1} (|\ln x_1|+1) \right) ^{N} + 2 \left( \beta^2 x_1^{-1} (|\ln x_1|+1) \right) ^{N-1} 
\\ \lesssim x_1^{1-N} (|\ln x_1|+1)^{N}.
\end{multline}
\end{proof}

{\bf Step 2:} Next we show that $\lambda$ restricted to the interval $(0,1)$ is a locally integrable function, i.e. after a slight abuse of notation we have:
\begin{lemma}\label{lm:lambdafunction}
 There exists a function $\lambda(x) \in L^1_{loc}((0,1])$ such that for any $[\alpha,\beta) \subset (0,1)$
 \begin{equation}\nonumber
  \lambda([\alpha,\beta)) = \int_\alpha^\beta \lambda(x) \dx. 
 \end{equation}
\end{lemma}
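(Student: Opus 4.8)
The plan is to show that $\lambda$ restricted to the open interval $(0,1)$ is absolutely continuous with respect to Lebesgue measure, with density $\lambda(x) := g(x)/B(x)$, where $B$ is the bending function from~\eqref{def:B} and
\[
 g(x) := \sum_{k\in\Ip} \left( a_k'^2(x)\, k^2 + a_k^2(x)\, k^6 \right).
\]
The main input is the identity~\eqref{66}: testing it against non-negative $\Psi\in\mathcal{D}(0,1)$ gives $\int_0^1 g\Psi\dx = \int_0^1 B\Psi\ud\lambda$, and since both $g\,\dx$ and $B\,\ud\lambda$ are locally finite non-negative Borel measures on $(0,1)$ this forces the identity of measures $B\,\ud\lambda = g\,\dx$ there. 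Once this is known, dividing by $B$ (legitimate because $B$ is strictly positive and locally bounded on $(0,1]$) yields $\lambda([\alpha,\beta)) = \int_\alpha^\beta g(x)/B(x)\dx$ for every $[\alpha,\beta)\subset(0,1)$, which is the claim — provided $g/B\in L^1_{loc}((0,1])$.

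So the two facts I would need are: (i) $g\in L^1_{loc}((0,1])$, and (ii) $B$ is bounded away from $0$ on compact subsets of $(0,1]$. For (ii), let $K_0 := \min\{k\in\Ip : a_k\not\equiv0\}$; by Corollary~\ref{cor:ak:positive} every nonzero frequency satisfies $a_k(x)>0$ on $(0,1]$ and $k\ge K_0$, so the constraint~\eqref{constr:f} gives
\[
 B(x) = \sum_{k\in\Ip} a_k^2(x)\, k^4 \ge K_0^2 \sum_{k\in\Ip} a_k^2(x)\, k^2 = 2K_0^2 x > 0, \qquad x\in(0,1];
\]
in particular $1/B$ is a Borel function bounded on each $[\alpha,\beta]\subset(0,1]$, while Lemma~\ref{lm:reg1} with $N=1$ bounds $B$ itself on such intervals, so $B\,\ud\lambda$ is locally finite on $(0,1)$.

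For (i): away from $x=1$ local integrability of $g$ already follows from~\eqref{66} by testing against a bump function, so the only real work is near $x=1$, where test functions supported in $(0,1)$ do not help. Fix $x_0\in(0,1)$ and split $\Ip$ into the finite set $\mathcal{F} := \{k\in\Ip : k < x_0^{-1/2}\}$ and its complement. For $k\notin\mathcal{F}$ one has $x\ge x_0\ge k^{-2}$ on $[x_0,1]$, so Corollary~\ref{cor:muk:1} gives $\mu_k^2(x)\le1$ and hence $a_k'^2(x)\, k^2 = \mu_k^2(x)\, a_k^2(x)\, k^6 \le a_k^2(x)\, k^6$ there; summing, $\sum_{k\notin\mathcal{F}} a_k'^2\, k^2 \le \sum_k a_k^2\, k^6$, which by Lemma~\ref{lm:reg1} with $N=2$ is bounded by $\bar C_2\, x^{-1}(|\ln x|^2+1)$, a function in $L^1([x_0,1])$. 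The finitely many remaining terms contribute $\sum_{k\in\mathcal{F}} k^2 \int_0^1 a_k'^2(x)\dx < \infty$, since each $a_k\in W^{1,2}(0,1)$. Adding these, $\int_{x_0}^1 g(x)\dx < \infty$; as $x_0$ was arbitrary, $g\in L^1_{loc}((0,1])$, and then $\lambda(x) := g(x)/B(x)\in L^1_{loc}((0,1])$ as well, which finishes the proof.

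The step I expect to be the main obstacle is precisely (i) near $x=1$: the clean bound $|\mu_k|\le1$ from Corollary~\ref{cor:muk:1} holds only on $[k^{-2},1]$, so on a fixed interval $[x_0,1]$ it fails for the finitely many low frequencies, which must be absorbed by the crude estimate $a_k'\in L^2(0,1)$ instead. One should also double-check that~\eqref{66} is legitimately used as an identity of measures: it is enough to have it for non-negative $\Psi\in\mathcal{D}(0,1)$, which is exactly what the monotone-convergence argument in the proof of Lemma~\ref{lm:lambda:lowerbound} delivers, together with the local finiteness of $\lambda$ on $(0,1)$ coming from Corollary~\ref{cor:lambda:ub}.
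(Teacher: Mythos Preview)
Your proof is correct and follows essentially the same route as the paper's: both use the identity~\eqref{66} to write $B\,\ud\lambda = g\,\dx$ on $(0,1)$, bound $B(x)\gtrsim x$ via the vanishing of low frequencies, and control $g$ by combining $|\mu_k|\le1$ from Corollary~\ref{cor:muk:1} (high frequencies) with the crude $a_k'\in L^2$ (low frequencies), together with Lemma~\ref{lm:reg1}. The only cosmetic difference is that the paper packages the high/low split as a single pointwise bound $g(x)\le x^{-1}\sum_k a_k'^2 + 2\sum_k a_k^2 k^6$ and then tests with $\Psi=\varphi/x$, whereas you split at a fixed threshold $x_0^{-1/2}$ and integrate; the ingredients are identical.
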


\begin{proof}
 First we observe that by Lemma~\ref{lm:trivak} $a_k \equiv 0$ for $k \le (1/2)^{1/4}$, and so
 \begin{equation}\nonumber
  \sum_{k \in \Ip} a_k^2(x) k^4 \ge (1/2)^{1/2} \sum_{k \in \Ip} a_k^2(x) k^2 \overset{\eqref{constr:f}}{=} \sqrt{2} x.
 \end{equation}
 Using~\eqref{66} and the previous relation we get
 \begin{equation}\label{103}
  \int_0^1 \sum_{k \in \Ip} (a_k'^2 k^2 + a_k^2 k^6) \Psi \dx = \int_0^1 \sum_{k \in \Ip} a_k^2k^4 \Psi \ud \lambda \gtrsim \int_0^1 x\Psi \ud \lambda
 \end{equation}
 for any non-negative test function $\Psi \in \mathcal{D}(0,1)$. Since by Lemma~\ref{cor:muk:1} $|\mu_k(x)|\le 1$ for $x \ge k^{-2}$, we have $|a_k'(x)|^2 k^2 \le a_k^2(x) k^6$ if $x \ge k^{-2}$. Thus 
 \begin{equation}\nonumber
  \sum_{k \in \Ip} a_k'^2(x)k^2 + a_k^2(x)k^6 \le x^{-1} \sum_{k \in \Ip} a_k'^2(x) + 2 \sum_{k \in \Ip} a_k^2(x)k^6 \overset{Lm~\ref{lm:reg1}}{\le} x^{-1} \sum_{k \in \Ip} a_k'^2(x) + C(|\ln x|+1).
 \end{equation}
 Hence, for any non-negative test function $\varphi \in \mathcal{D}(0,1)$, the choice of $\Psi(x) := \varphi(x)/x$ in~\eqref{103} and the previous relation imply
 \begin{equation}\nonumber
  \int_0^1 \varphi \ud \lambda \lesssim \int_0^1 \left[ \frac{m(x)}{x^2} + \frac{|\ln x|+1}{x} \right] \varphi(x) \dx,
 \end{equation}
 where $m = \sum_{k \in \Ip} a_k'^2k^2 \in L^1(0,1)$. This concludes the proof of the lemma. 
\end{proof}

{\bf Step 3:} Since $\lambda$ is a (locally integrable) function, we use \eqref{EL} to show that $a_k''$ is (locally) $L^1$, which in turn allows us to test~\eqref{EL} with $a_k'$ to obtain the following:

\begin{lemma}
We have
\begin{equation}\label{cor20.1-}
  \lint u_{,x}^2(x_1,y) \dy = \sum_{k\in\Ip} a_k'^2(x_1) \lesssim |\ln x_1|+1.
\end{equation}
\end{lemma}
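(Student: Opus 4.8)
The plan is to promote the weak Euler--Lagrange identity \eqref{EL} to a genuine (pointwise a.e.) ODE, multiply it by $a_k'$ and integrate over $[x_1,1]$, and then sum over $k$; the key point is that, after summing, the contribution of the Lagrange multiplier collapses — via the differentiated constraint — to $2\lambda([x_1,1))$, which is controlled by Corollary~\ref{cor:lambda:ub}.

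First I would use Lemma~\ref{lm:lambdafunction}: on $(0,1)$ the Lagrange multiplier is a function $\lambda\in L^1_{\mathrm{loc}}((0,1])$, so \eqref{EL} together with \eqref{mu} say that for every $k\in\Ip$
\begin{equation*}
 a_k''(x)=a_k(x)k^2\bigl(k^2-\lambda(x)\bigr)\quad\text{a.e. on }(0,1),\qquad a_k'(1)=\mu\,a_k(1)k^2,
\end{equation*}
with $\mu=\lambda(\{1\})\ge0$. Since $a_k$ is continuous on $(0,1]$ and $\lambda\in L^1((x_1,1))$ for each $x_1\in(0,1]$, the right-hand side of the ODE lies in $L^1((x_1,1))$; hence $a_k'\in W^{1,1}((x_1,1))$, its (left-continuous) representative is absolutely continuous — in particular bounded — on $[x_1,1]$, and $a_k'(x_1)$ is well defined. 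I also record that $\mu<1$: indeed $\mu=\mu_k(1)$ for every non-zero frequency with $k\ge1$ (such exist, cf.\ \eqref{claimM}), and $\mu_k(1)<1$ by Corollary~\ref{cor:muk:1}.

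Next, fixing $x_1\in(0,1]$, for every $k$ with $a_k\not\equiv0$ I would multiply the ODE by $a_k'$ and integrate over $[x_1,1]$, using $\tfrac12(a_k'^2)'=a_k'a_k''$, $\tfrac12(a_k^2)'=a_ka_k'$, and $a_k,a_k'\in L^\infty((x_1,1))$, $\lambda\in L^1((x_1,1))$, to get
\begin{equation*}
 a_k'^2(x_1)=a_k'^2(1)-k^4a_k^2(1)+k^4a_k^2(x_1)+k^2\!\int_{x_1}^1(a_k^2)'(x)\,\lambda(x)\dx .
\end{equation*}
Substituting $a_k'^2(1)=\mu^2k^4a_k^2(1)$ and using $\mu<1$ yields $a_k'^2(x_1)\le k^4a_k^2(x_1)+k^2\int_{x_1}^1(a_k^2)'\lambda\dx$ (trivially true also if $a_k\equiv0$). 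Summing over $k$: the first term sums to $B(x_1)$ (see \eqref{def:B}); for the second I would exchange sum and integral — this is legitimate since $\sum_k k^2\int_{x_1}^1|(a_k^2)'|\lambda<\infty$: splitting off the frequencies $k\ge x_1^{-1/2}$, on $[x_1,1]\subset[k^{-2},1]$ one has $|\mu_k|\le1$ by Corollary~\ref{cor:muk:1}, hence $k^2|(a_k^2)'|=2k^4|\mu_k|a_k^2\le2k^4a_k^2$ and $\sum_{k\ge x_1^{-1/2}}k^2|(a_k^2)'|\le2B\in L^1((x_1,1))$, while the finitely many remaining non-zero frequencies contribute bounded functions times $\lambda\in L^1((x_1,1))$. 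Then, using the differentiated constraint $\sum_k(a_k^2)'(x)k^2=2$ a.e.\ (obtained from \eqref{constr:f} as in the proof of Lemma~\ref{lm:lambda:lowerbound}),
\begin{equation*}
 \sum_{k\in\Ip}a_k'^2(x_1)\le B(x_1)+\int_{x_1}^1\lambda(x)\sum_{k\in\Ip}(a_k^2)'(x)k^2\dx=B(x_1)+2\lambda\bigl([x_1,1)\bigr).
\end{equation*}

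To conclude, $B(x_1)\le\bar C_1(|\ln x_1|+1)$ by Lemma~\ref{lm:reg1} (with $N=1$) and $\lambda([x_1,1))\le C_3(|\log_2x_1|+1)\lesssim|\ln x_1|+1$ by Corollary~\ref{cor:lambda:ub}, which gives \eqref{cor20.1-}. I expect the main obstacle to be the rigorous justification of the exchange of summation and integration (equivalently, the absolute convergence displayed above) together with the pointwise calculus on $a_k'$ that the first step makes available; the rest is bookkeeping.
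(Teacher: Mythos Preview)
Your proof is correct and rests on the same identity as the paper's: multiply the Euler--Lagrange ODE by $a_k'$, sum, and use the differentiated constraint $\sum_k(a_k^2)'k^2=2$ to collapse the Lagrange-multiplier term, yielding $\tfrac12\bigl(\sum_k a_k'^2-a_k^2k^4\bigr)'=\lambda$; then integrate and bound via Lemma~\ref{lm:reg1} and Corollary~\ref{cor:lambda:ub}. The only real difference is at the right endpoint: the paper integrates from $x_1$ to a point $x_0\in(1/2,1)$ chosen by averaging so that $\bigl|\sum_k a_k'^2(x_0)-a_k^2(x_0)k^4\bigr|\lesssim1$, while you integrate all the way to $x=1$ and use the boundary condition $a_k'(1)=\mu\,a_k(1)k^2$ together with $\mu<1$ to drop the boundary contribution---a slightly cleaner variant. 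One minor slip in your Fubini justification: to integrate the high-frequency bound against $\lambda$ you need $B\lambda\in L^1((x_1,1))$, not just $B\in L^1$; but this is immediate since $B$ is bounded on $[x_1,1]$ by the very Lemma~\ref{lm:reg1} you already invoke.
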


\begin{proof}
Since the Lagrange multiplier $\lambda$ is a locally integrable function, the Euler-Lagrange equation~\eqref{EL} implies
\begin{equation}\label{EL2}
 -a_k''(x) = a_k(x)k^4 - a_k(x)k^2 \lambda (x) \in L^1_{loc}((0,1]).
\end{equation}
As a consequence, we obtain that $a_k' \in L^{\infty}_{loc}((0,1])$, which then justifies the following computation:
\begin{equation}\label{cor20.3}
 \frac{1}{2} \Bigg( \sum_{k\in\Ip} a_k'^2(x) - a_k^2(x) k^4 \Bigg)' =  \sum_{k\in\Ip}  a''_k(x) a'_k(x) - a'_k(x) a_k(x) k^4 = \lambda(x) \sum_{k\in\Ip} a'_k(x) a_k(x) k^2 = \lambda(x),
 \end{equation}
where the last inequality is obtained by differentiating~\eqref{constr:f}. Since $\displaystyle\int_0^1 \Bigg( \sum_{k\in\Ip} a_k'^2 + a_k^2k^4\Bigg) \dx = \sigma_L \lesssim 1$, we can find $x_0 \in (1/2,1)$ such that $\displaystyle\sum_{k\in\Ip} a_k^2(x_0) k^4 - a_k'^2(x_0) \lesssim 1$. Integrating~\eqref{cor20.3} from $x_0$ to $x_1$ then implies
\begin{multline}\label{cor20.4} 
 \Bigg|\sum_{k\in\Ip} a_k^2(x_1) k^4 - a_k'^2(x_1)\Bigg| \le \Bigg| \int_{x_0}^{x_1}  \Bigg( \sum_{k\in\Ip} a_k'^2(x) - a_k^2(x) k^4 \Bigg)' \dx \Bigg| + \Bigg|\sum_{k\in\Ip} a_k^2(x_0) k^4 - a_k'^2(x_0)\Bigg|
\\ \le 2\Bigg|\int_{x_0}^{x_1} \lambda(x) \dx\Bigg|+ C \le 2\Bigg|\int_{x_1}^{1} \lambda(x) \dx\Bigg| + C'.
\end{multline}
By Corollary~\ref{cor:lambda:ub} %Using Lemma~\ref{lm:lambda:ub} on intervals $(x_1,2x_1), (2x_1,4x_1), \ldots$ (or $(x_0,x_1)$ in the case $x_0 < x_1$) we obtain 
$\displaystyle\left|\int_{x_1}^1 \lambda(x) \dx\right| \lesssim |\ln x_1| + 1$, thus~\eqref{cor20.4} together with Lemma~\ref{lm:reg1} imply~\eqref{cor20.1-}.
\end{proof}

 {\bf Step 4:} For any $x_1 \in (0,1]$
\begin{align}\label{cor20.1}
  \lint u_{,xy}^2(x_1,y) \dy = \sum_{k \in \Ip} a_k'^2(x_1) k^2 &\lesssim x_1^{-1} (|\ln x_1|^2+1),\\
  \lint u_{,xyy}^2(x_1,y) \dy = \sum_{k \in \Ip} a_k'^2(x_1) k^4 &\lesssim x_1^{-2} (|\ln x_1|^3+1).\label{cor20.1+}
\end{align}
We start with the proof of~\eqref{cor20.1}. For $\kappa := x_1^{-1/2}$ we write
 \begin{equation}\label{cor20.2}
  \sum_{k \in \Ip} a_k'^2(x_1) k^2 = \sum_{k \in \Ip, k > \kappa} a_k'^2(x_1) k^2 + \sum_{k \in \Ip, k \le \kappa} a_k'^2(x_1) k^2.
\end{equation}
Since by Corollary~\ref{cor:muk:1} $\mu^2_k(x) \le 1$ for $x \ge 1/k^2$ and $a'_k(x) = \mu_k(x) a_k(x) k^2$, the first sum on the right-hand side can be estimated by
\begin{equation}\nonumber
 \sum_{k \in \Ip, k > \kappa} a'^2_k(x_1) k^2 \le \sum_{k\in\Ip,k > \kappa} a_k^2(x_1) k^6 \underset{\textrm{with } N=2}{\overset{\textrm{Lm \ref{lm:reg1}}}{\lesssim}}  x_1^{-1} (|\ln x_1|^2+1).
\end{equation}
From~\eqref{cor20.1-} we know 
$\sum_{k \in\Ip, k \le \kappa} a'^2_k(x_1) k^2 \le \kappa^2 \sum_{k\in\Ip} a'^2_k(x_1) = x_1^{-1} \sum_{k\in\Ip} a'^2_k(x_1) \lesssim x_1^{-1} (|\ln x_1|+1)$. Combining these two estimates with~\eqref{cor20.2} gives~\eqref{cor20.1}.  

The proof of~\eqref{cor20.1+} is based on the same ideas as the proof of~\eqref{cor20.1} -- 
we split~\eqref{cor20.1+} into two sums to get:
 \begin{multline}\nonumber
  \sum_{k\in\Ip} a_k'^2(x_1) k^4 = \sum_{k\in\Ip,k > \kappa} a_k'^2(x_1) k^4 + \sum_{k\in\Ip, k \le \kappa} a_k'^2(x_1) k^4 \le \sum_{k\in\Ip,k > \kappa} a_k^2(x_1) k^8 + \kappa^4 \sum_{k\in\Ip} a_k'^2(x_1) 
\\ \lesssim x_1^{-2}(|\ln x_1|^3 + 1) + x_1^{-2} (|\ln x_1| + 1).
\end{multline}

{\bf Step 5:} We claim that
% Before we prove the last missing estimate (on $\sum \ddot a_k^2$), we need to obtain a pointwise estimate for the Lagrange multiplier $\lambda(x)$:
 \begin{equation}\label{lambda:ub}
  \lambda(x) \lesssim x^{-1} (|\ln x|^3 + 1) \quad \textrm{for a.e. } x \in (0,1).
 \end{equation}
 First we recall~\eqref{66}, which states that for any $\Psi \in \mathcal{D}((0,1))$, $\Psi\ge0$:
 \begin{equation}\label{lambda:der}
  \int_0^1 \lambda(x) \left( \sum_{k\in\Ip} a_k^2(x) k^4\right) \Psi(x) = \int_0^1 \left( \sum_{k\in\Ip} a_k^2(x) k^6 + a_k'^2(x) k^2 \right) \Psi(x) \dx.
 \end{equation}
By Lemma~\ref{lm:reg1} and~\eqref{cor20.1}, the argument in the parenthesis on the right-hand side of the previous relation is bounded by a multiple of $x^{-1} (|\ln x|^2+1)$, and~\eqref{lambda:ub} follows from the following lower bound on $\sum_{k\in\Ip} a_k^2(x) k^4$:
 \begin{align*}\nonumber
  (2x)^2 &= \left( \sum_{k\in\Ip} a_k^2(x) k^2 \right)^2 \overset{Cauchy- \atop Schwarz}{\le} \left( \sum_{k\in\Ip} a_k^2(x) \right) \left( \sum_{k\in\Ip} a_k^2(x) k^4 \right) 
\\ & \!\!\!\!\!\!\!\!\overset{a_k(0)=0}{=} \left( \sum_{k\in\Ip} \left[ \int_0^x a_k'(x') \ud x' \right] ^2\right) \left( \sum_{k\in\Ip} a_k^2(x) k^4 \right) \overset{\text{H\"older's}}{\le} x \left( \int_0^x  \sum_{k\in\Ip} a_k'^2(x') \ud x' \right) \cdot \left( \sum_{k\in\Ip} a_k^2(x) k^4 \right) 
\\ &\!\!\overset{\eqref{cor20.1-}}{\lesssim} x \left( \int_0^x |\ln x'|+1 \ud x' \right) \left( \sum_{k\in\Ip} a_k^2(x) k^4 \right) 
\\ &= x^2 (|\ln x|+2) \left( \sum_{k\in\Ip} a_k^2(x) k^4 \right).
 \end{align*}
%  where the second line follows from the first one by~\eqref{cor20.1-}. 

{\bf Step 6:} For (a.e.) $x \in (0,1)$
 \begin{equation}\label{adotdot}
  \lint u_{,xx}^2(x,y) \dy =\sum_{k\in\Ip} a_k''^2(x) \lesssim x^{-2} (|\ln x|^7+1).
 \end{equation}
  Since $\lambda(x) \ge 0$ a.e. in $(0,1)$, the Euler-Lagrange equation $a_k''(x) = a_k(x) k^4 - \lambda(x) a_k(x) k^2$ %(see~\eqref{EL}) 
implies that a.e. in $(0,1)$:
 \begin{equation}\nonumber
  \sum_{k\in\Ip} a_k''^2(x) \le \sum_{k\in\Ip} a_k^2(x) k^8 + \lambda(x)^2 \sum_{k\in\Ip} a_k^2(x) k^4.
 \end{equation}
 Then~\eqref{adotdot} follows from Lemma~\ref{lm:reg1} (applied with $N=1$ and $N=2$) and \eqref{lambda:ub}.

 We showed that~\eqref{adotdot} holds for a.e. $x \in (0,1)$, which is sufficient for the proof of the upper bound (see Section~\ref{sect:ub}). For completeness let us sketch the idea how to extend it to all $x \in (0,1)$. To obtain that it is enough to show that the Lagrange multiplier $\lambda$ is a continuous function on $(0,1)$. For that we can use the results from this step (i.e.~\eqref{adotdot} for a.e. $x \in (0,1)$) and the previous steps to show that $\sum_{k\in\Ip} a_k^2(x) k^6 + a_k'^2(x) k^2 / \sum_{k\in\Ip} a_k^2(x) k^4$ has a derivative in $L^\infty_{loc}((0,1])$, which implies that also $\lambda' \in L^\infty_{loc}((0,1])$, in particular it is continuous in $(0,1]$.  

{\bf Step 7:} We claim that
 \begin{equation}\label{a}
  \lint u^2(x,y) \dy =\sum_{k\in\Ip} a_k^2(x) \lesssim x^2 (|\ln x|+1).
 \end{equation}
 We use $a_k(0)=0$ to write
 \begin{multline}\nonumber
  \sum_{k\in\Ip} a_k^2(x) = \sum_{k\in\Ip} \left( \int_0^x a_k'(x') \ud x'\right)^2 \overset{\text{H\"older's}}{\le} x \int_0^x \Bigg( \sum_{k\in\Ip} a_k'^2(x') \Bigg) \ud x'
\\ \overset{\eqref{cor20.1-}}{\lesssim} x \int_0^x (|\ln x'|+1) \ud x' = x^2 (|\ln x|+2).
 \end{multline}
 
\section{Proof of Lemma~\ref{lm:deltal}}\label{pf:lm41}

\renewcommand{\ve}{\varphi_\eta} 

In this section we prove Lemma~\ref{lm:deltal}, which we used in the proof of the lower bound (see Section~\ref{sect:lb}):

\noindent\usebox{\lemboxA}

\begin{proof}
Lemma~\ref{lm:relaxed:constrain} says that
\begin{equation}
 \min_{\sum_{k\in\Ip} a_k^2(x)k^2 = 2x} \S(u) =  \min_{\stackrel{\sum_{k\in\I} a_k^2(x)k^2 \ge 2x}{a_k(0)=0}} \S(u).
\end{equation} 
Hence, to prove~\eqref{deltal}, given $v : [-1,1]\times[-L,L] \to \R$, a $2L$-periodic function in $y$, it is enough to construct a function $u : [0,1] \times [-L,L] \to \R$, which is $2L$-periodic in $y$, and satisfies 
\begin{align}\nonumber
\lint u_{,y}^2(x,y) \dy &\ge 2x, \quad x \in (0,1],
\\ \lint u_{,y}^2(0,y) \dy &= 0, \label{bkzero}
\end{align}
together with
\begin{equation}\label{60}
 \S(u) \le  \S(v) + L^2 \int_{-1}^1 \left( \lint v_{,y}^2(x,y)/2 \dy  - \vt(x) \right)^2 \dx + \delta_L,
%\S(v) + L^2 \int_0^1 \left( \lint v_{,y}^2(x,y)/2 \dy  - x \right)^2 \dx + \delta_L.
\end{equation}
where $\vt(x) = x \chi_{[0,1]}(x)$ was defined in~\eqref{upsilon}.
If $\S(v) + L^2 \int_{-1}^1 \left( \lint v_{,y}^2(x,y)/2 \dy - \vt(x) \right)^2 \dx \ge 4\sigma_1$, \eqref{60} immediately follows (observe that by Lemma~\ref{lm:sigmal} $\sigma_L \le 4\sigma_1$). Therefore we can assume that
\begin{equation}\label{le:sigmal}
 \int_{-1}^1 \left( \lint v_{,y}^2(x,y)/2 \dy  - \vt(x) \right)^2 \dx \le 4\sigma_1 L^{-2}.
\end{equation}
Let $0 < \eta < \pi^{-6}$ and
\begin{equation}\nonumber
 \ve(x) := \frac{1}{\eta} \varphi\left( \frac{x - 2\eta}{\eta} \right),
\end{equation}
where $\varphi \in C^{\infty}(\R)$ is a standard smoothing kernel with compact support in $(-1,1)$, which is even, $0 \le \varphi \le 1$, and $\displaystyle \int_{-1}^1 \varphi = 1$.
Since $v$ is $2L$-periodic in $y$, we can use Fourier series to write 
\begin{equation}\nonumber
 v(x,y) = \sum_{k \in \I, k>0} a_k(x)\sin(\pi ky) + \sum_{k \in \I, k\le 0} a_k(x)\cos(\pi ky).
\end{equation}
For $k \in \Ip$ and $x \in [0,1]$ we define
\begin{equation}\nonumber
 b_k(x) := \sqrt{ \left( a_k^2 + a_{-k}^2 \right) * \ve (x)},
\end{equation}
where $*$ stands for the convolution. 
We observe that since $\eta < 1/3$ and $a_k$ is defined in $[-1,1]$, $b_k$ is well-defined. Moreover, by following the proof of Lemma~\ref{lm:sublinear} we observe that 
\begin{equation}\label{bk:est}
 \int_0^1 \sum_{k\in\Ip} b_k'^2(x) + b_k^2 k^4 \dx \le \int_{-1}^1 \sum_{k\in\I} a_k'^2(x) + a_k^2 k^4 \dx = \S(v).
\end{equation}
For the construction to satisfy~\eqref{bkzero} we would need $\sum_{k\in\Ip} b_k^2(0) k^2 = 0$, but it can be only shown that $\sum_{k\in\Ip} b_k^2(0) k^2$ is small (see~\eqref{187} below). Because of that we will change $b_k$ in few steps. 
By repeating the proof of Lemma~\ref{lm:trivak} we obtain that for any $k \in \Ip$
\begin{equation}\label{187}
\int_0^1 b_k'^2 \dx \le 4 \int_0^1 b_{k}^2 k^4 \dx,
\end{equation}
or otherwise we can replace $b_k$ and decrease the energy. 
Since $\supp \ve \subset (\eta,3\eta)$, we get for $x \in [0,\eta]$:
\begin{multline}
 \sum_{k\in\Ip} b_k^2(x) k^2 = \sum_{k \in \I} (a_k^2 * \ve)(x)k^2 = \int_{-1}^0 \sum_{k \in \I} a_k^2(x')k^2 \ve(x-x') \ud x'
\\ \overset{\textrm{H\"older's}}{\le} \Bigg( \int_{-1}^{0} \Bigg( \sum_{k \in \I} a_k^2(x) k^2 \Bigg)^2 \dx \Bigg)^{1/2} \Bigg( \int_{-1}^1 \ve^2(x) \dx \Bigg)^{1/2} \lesssim L^{-1} \eta^{-1/2},\label{188}
\end{multline}
where the last inequality follows from $\displaystyle \lint v_{,y}^2(x,y) \dy = \sum_{k \in \I} a_k^2(x) k^2$ and \eqref{le:sigmal}.
For $k \in \Ip, k \le 1/2$, $b_k \not\equiv 0$, we have
\begin{equation}\nonumber
 \max_{x \in [0,1]} b_k - \min_{x \in [0,1]} b_k \le \int_0^1 |b'_k| \dx \overset{\textrm{H\"older's}}{\le} \left( \int_0^1 b_k'^2 \dx \right)^{1/2} \overset{\eqref{187}}{\le} 2k^2 \left( \int_0^1 b_k^2 \dx \right)^{1/2} \overset{k \le 1/2}{\le} \max_{x \in [0,1]} b_k/2,
\end{equation}
which implies $\max_{x \in [0,1]} b_k(x) \le 2 \min_{x \in [0,1]} b_k(x)$. Therefore, for any $x \in [0,1]$:
\begin{equation}\label{190}
 \sum_{k \in \Ip, k \le 1/2} b_k^2(x) k^2 \le 2 \sum_{k \in \Ip, k \le 1/2} b_k^2(0) k^2 \overset{\eqref{188}}{\lesssim} L^{-1}\eta^{-1/2}.
\end{equation}
For $k \in \Ip, k \le 1/2$ we define $c_k(x) := 0$, for $k \in \Ip, 1/2 < k < \eta^{-1/2}$  
\begin{equation}\nonumber
 c_k(x) := \left\{ \begin{array}{ll}
 \frac{x}{\eta} b_k(\eta) \ & x \in [0,\eta]
\\			 b_k(x) & x \in [\eta,1],
\end{array} \right.
\end{equation}
and for $k \in \Ip, k \ge \eta^{-1/2}$ 
\begin{equation}\nonumber
 c_k(x) := \left\{ \begin{array}{ll}
 0 \ & x \in [0,\eta-k^{-2}]
\\ k^2(x-\eta+k^{-2}) b_k(\eta) & x \in [\eta-k^{-2},\eta]
\\ b_k(x) & x\in [\eta,1].           
\end{array} \right.
\end{equation}
To estimate the energy of $c_k$ we start with $k \in \Ip, k\in (1/2,\eta^{-1/2})$:
\begin{equation}\nonumber
 \int_0^1 c_k'^2 + c_k^2 k^4 \dx = \left( \eta^{-1} + k^4 \eta/3 \right) b_k^2(\eta) + \int_{\eta}^1 b_k'^2 + b_k^2 k^4 \dx \le 5\eta^{-1}k^2 b_k^2(\eta) + \int_{0}^1 b_k'^2 + b_k^2 k^4 \dx,
\end{equation}
where in the last inequality we used that $k \in (1/2,\eta^{-1/2})$, and so $\eta^{-1} \le 4\eta^{-1}k^2$ and $k^4\eta/3 \le \eta^{-1}k^2$. If $k \in \I, k \ge \eta^{-1/2}$, we get
\begin{equation}\nonumber
 \int_0^1 c_k'^2 + c_k^2 k^4 \dx = \frac{4}{3} b_k^2(\eta)k^2 + \int_{\eta}^1 b_k'^2 + b_k^2 k^4 \dx \le \frac{4}{3} b_k^2(\eta) k^2 + \int_{0}^1 b_k'^2 + b_k^2 k^4 \dx.
\end{equation}
We use two previous inequalities to obtain
\begin{multline}\label{195}
 \int_0^1 \sum_{k \in \Ip} c_k'^2 + c_k^2 k^4 \dx \le \left( 5\eta^{-1} + \frac{4}{3} \right) \sum_{k\in \Ip} b_k^2(\eta) k^2 + \int_0^1 \sum_{k\in \Ip} b_k'^2 + b_k^2 k^4 \dx 
\\ \overset{\eqref{188}}{\le} CL^{-1}\eta^{-3/2} + \int_0^1 \sum_{k\in \Ip} b_k'^2 + b_k^2 k^4 \dx.
\end{multline}
Next we want to compare $\sum_{k \in \Ip} c_k^2(x) k^2$ with $2x$. For $x \in [0,1]$, we define the auxiliary function 
\begin{equation}\nonumber
 \psi(x) := \int_0^1 2x' \ve(x-x') \ud x' = 2(\vt * \ve) (x),
\end{equation}
and set
\begin{equation}\label{r1}
 r(x) := 2x - \sum_{k \in \Ip} c_k^2(x)k^2.
\end{equation}
It follows from the definition of $c_k$ that for $x \in [\eta,1]$:
\begin{multline}\label{bk:constr} 
 r(x) = 2x - \sum_{k \in \Ip, k>1/2} b_k^2(x)k^2 = 2x - \sum_{k \in \Ip} b_k^2(x)k^2 + \sum_{k \in \Ip, k \le 1/2} b_k^2(x)k^2  
\\ = (2x - \psi(x)) - \Bigg(\Bigg( \sum_{k \in \I} a_k(\cdot)^2 k^2 - 2\vt(\cdot) \Bigg) * \ve\Bigg)(x) + \sum_{k \in \Ip, k \le 1/2} b_k^2(x) k^2. 
\end{multline}
Then~\eqref{le:sigmal} implies
\begin{equation}\nonumber
 \frac{1}{4} \int_{-1}^{1} \Bigg( \sum_{k\in\I} a_k^2(x)k^2 - 2\vt(x) \Bigg)^2 \dx = \int_{-1}^1 \left( \lint v_{,y}^2(x,y)/2 \dy  - \vt(x) \right)^2 \dx \le 4\sigma_1 L^{-2},
\end{equation}
and so 
\begin{equation}\nonumber
 \Bigg| \Bigg(\Bigg( \sum_{k \in \I} a_k(\cdot)^2 k^2 - 2\vt(\cdot) \Bigg) * \ve\Bigg)(x) \Bigg| \le \Bigg\| \sum_{k\in\I} a_k^2(\cdot)k^2 - 2\vt(\cdot) \Bigg\|_{L^2(-1,1)} \left\| \ve \right\|_{L^2} \lesssim L^{-1} \left\| \ve \right\|_{L^2}.
\end{equation}
Hence, it follows from \eqref{bk:constr} that for every $x\in [\eta,1]$:
\begin{gather}
 %|r(x) - (2x-\psi(x))| \le 2\left( (\sigma_L+1) L^{-2} \right)^{1/2} \left\| \ve \right\|_{L^2} \le 2(\sigma_L+1)^{1/2}L^{-1} \eta^{-1/2}.
|r(x) - (2x-\psi(x))| \overset{\eqref{190}}{\lesssim} L^{-1} \left\| \ve \right\|_{L^2} + L^{-1}\eta^{-1/2} \lesssim L^{-1} \eta^{-1/2}.\label{200}
\end{gather}
It follows from the definition of $r$ (see~\eqref{r1}) that $r(x) \le 2x$ for all $x \in [0,1]$. We also observe that $\psi(x) = 2x - 4\eta$ for $x \in [3\eta,1]$, and so $2x-\psi(x) = 4\eta$ for $x \in [3\eta,1]$. Since $\eta < \pi^{-6} \le (1/3)^{3}$ implies $\eta^{2/3} \ge 3\eta$, it follows from \eqref{200} that $r(x) \le 4\eta + \bar CL^{-1}\eta^{-1/2}$ for $x \in [\eta^{2/3},1]$. To summarize, we have
\begin{equation}\label{202}
 r(x) \le \left\{ \begin{array}{ll} 2x & \quad x \in [0,\eta^{2/3}] 
\\ 4\eta + \bar CL^{-1}\eta^{-1/2} & \quad x \in [\eta^{2/3},1]. \end{array} \right.
\end{equation}

Now we would like to define $d_k(x)$ such that $\sum_{k \in \Ip} d_k^2(x) k^2 \ge r(x)$,  $\sum_{k \in \Ip} d_k^2(0) k^2 = 0$, and such that its energy $\displaystyle \int_0^1 \sum_{k\in\Ip}  d_k'^2(x) + d_k(x)^2 k^4 \dx$ is small. To do that we first use Lemma~\ref{lm:construction} with $b = \eta^{2/3}$ to obtain $u$ (with coefficients $e_k$) such that  $\sum_{k \in \Ip} e_k^2(x) k^2 = 2x$ for $x \in [0,\eta^{2/3}]$ and $\int_0^1 \sum_{k\in\Ip} e_k'^2 + e_k^2 k^4 \lesssim \eta^{2/3}$.

Since $\eta < \pi^{-6}$ and $L\ge1$, we have that $\eta^{-1/6} > \pi/L$. Therefore we can find $k_0 \in \Ip, \eta^{-1/6} \le k_0 \le 2\eta^{-1/6}$ and define
\begin{equation}\nonumber
 f_{k_0}(x) := \begin{cases} \frac{x}{\eta^{2/3}} \frac{\sqrt{4\eta + \bar C L^{-1} \eta^{-1/2}} }{k_0} & x \in [0,\eta^{2/3}]
\\ \frac{\sqrt{4\eta + \bar C L^{-1} \eta^{-1/2}} }{k_0} & x \in [\eta^{2/3},1].
               \end{cases}
\end{equation}
We set $f_k \equiv 0$ for all $k \in \Ip, k \neq k_0$. Then
\begin{equation}\nonumber
 \int_0^1 f_{k_0}'^2 + f_{k_0}^2 k_0^4 \lesssim ( 4\eta + \bar C L^{-1} \eta^{-1/2} ) \eta^{-1/3}
\end{equation}
and for $x \in [\eta^{2/3},1]$
\begin{equation}\nonumber
 f_{k_0}^2(x) k_0^2 = 4\eta + \bar C L^{-1} \eta^{-1/2} \ge r(x).
\end{equation}
Therefore, by using Lemma~\ref{lm:sublinear} to combine $e_k$ and $f_k$ we obtain $d_k(x) = \sqrt{e_k^2(x) + f_k^2(x)}$ such that  $\sum_{k \in \Ip} d_k^2(x) k^2 \ge r(x)$, $\sum_{k \in \Ip} d_k^2(0) k^2 = 0$, and $\displaystyle \int_0^1 \sum_{k\in\Ip} d_k'^2(x) + d_k(x)^2 k^4 \dx \lesssim \eta^{-1/3} \left( 4\eta + \bar C L^{-1} \eta^{-1/2} \right)$. 
Finally, we use Lemma~\ref{lm:sublinear} again to combine $c_k$ and $d_k$ into $g_k = \sqrt{c_k^2 + d_k^2}$ so that 
\begin{equation}\nonumber
 \sum_{k \in \Ip} g_k^2(x) k^2 = \sum_{k \in \Ip} c_k^2(x) k^2 + d_k^2(x) k^2 \ge \sum_{k \in \Ip} c_k^2(x) k^2 + r(x) \overset{\eqref{r1}}{=} 2x
\end{equation}
and
\begin{multline}\nonumber
 \int_0^1 \sum_{k\in\Ip} g_k'^2(x) + g_k(x)^2 k^4 \dx \overset{\eqref{195}}{\le} C\left( L^{-1}\eta^{-3/2} + \eta^{2/3} + L^{-1} \eta^{-5/6} \right) + \int_0^1 \sum_{k\in\Ip}  b_k'^2(x) + b_k(x)^2 k^4 \dx 
\\ \overset{\eqref{bk:est}}{\le} C\left( L^{-1}\eta^{-3/2} + \eta^{2/3} + L^{-1} \eta^{-5/6} \right) + \S(v).
\end{multline}
To conclude we observe that $\sum_{k \in \Ip} g_k^2(0) k^2 = 0$ and that by choosing $\eta := \min(L^{-1/2},\pi^{-6}/2)$ we get that $L^{-1}\eta^{-3/2} + \eta^{2/3} + L^{-1} \eta^{-5/6} \to 0$ as $L \to \infty$.
\end{proof}

\section*{Acknowledgments}
The author would like to thank M. Goldman and F. Otto for interesting discussions. 

\bibliographystyle{amsplain}
\bibliography{bella}

\end{document}